\newenvironment{alg:equation}
{\begin{equation}}
{\@endalgocfline\vspace{-\baselineskip}\end{equation}\hfill\strut\par}
\newenvironment{alg:equation*}
{\begin{equation*}}
{\@endalgocfline\vspace{-\baselineskip}\end{equation*}\hfill\strut\par}
\newenvironment{alg:problem}
{\begin{problem}}
{\@endalgocfline\vspace{-\baselineskip}\end{problem}\hfill\strut\par}
\declaretheorem[within=section,name=Proposition,refname={Proposition,Propositions},Refname={Proposition,Propositions}]{prop}
\declaretheorem[sibling=prop,name=Lemma,refname={lemma,lemmas},Refname={Lemma,Lemmas}]{lemma}
\declaretheorem[sibling=prop,name=Theorem,refname={theorem,theorems},Refname={Theorem,Theorems}]{theorem}
\declaretheorem[sibling=prop,name=Remark,refname={remark,remarks},Refname={Remark,Remarks},style=remark]{remark}
\declaretheorem[sibling=prop,name=Corollary,refname={corollary,corollaries},Refname={Corollary,Corollaries}]{corollary}
\newcommand{\converge}[4]{#1\underset{#3\to #4}{\longrightarrow}#2}
\newcommand{\argmin}[1]{\underset{#1}{\text{\upshape{argmin }}}}
\newcommand{\argmax}[1]{\underset{#1}{\text{\upshape{argmax }}}}
\newcommand{\minimize}[1]{\underset{#1}{\text{\upshape{minimize }}}}
\newcommand{\oracle}{\text{\upshape{oracle}}}
\newcommand{\step}[1]{%
\par
\addvspace{\medskipamount}%
\noindent\textbf{Step~#1.}%
}
\def\E{\mathcal E}
\def\M{\mathcal M}
\def\P{\mathcal P}
\def\R{\mathbb R}
\def\N{\mathbb N}
\def\X{\mathcal X}
\DeclarePairedDelimiterXPP{\descrnorm}[3]{}{\lVert}{\rVert}
{
\ifblank{#2}{}{_{#2}} 
\ifblank{#3}{}{^{#3}}
}{
\ifblank{#1}{\:\cdot\:}{#1}
}
\newcommand{\norm}[1]{\descrnorm*{#1}{}{}}
\newcommand{\descrsqnorm}[2]{\descrnorm*{#1}{#2}2}
\newcommand{\sqnorm}[1]{\descrsqnorm{#1}{}}
\newcommand{\descrEnorm}[3]{\descrnorm*{#1}{\ifblank{#2}{\E}{\E_{#2}}}{#3}}
\newcommand{\Enorm}[2]{\descrEnorm{#1}{#2}{}}
\newcommand{\sqEnorm}[2]{\descrEnorm{#1}{#2}2}
\newcommand{\descrXnorm}[3]{\descrnorm*{#1}{\ifblank{#2}{\X}{\X_{#2}}}{#3}}
\newcommand{\Xnorm}[2]{\descrXnorm{#1}{#2}{}}
\DeclarePairedDelimiterXPP{\descrInnerProd}[3]{}{\langle}{\rangle}
{
\ifblank{#3}{}{_{#3}}
}
{
\ifblank{#1}{\:\cdot\:}{#1},\mathopen{}
\ifblank{#2}{\cdot\:}{#2}
}
\newcommand{\InnerProd}[2]{\descrInnerProd*{#1}{#2}{}}
\newcommand{\EInnerProd}[3]{\descrInnerProd*{#1}{#2}{\ifblank{#3}{\E}{\E_{#3}}}}
\DeclarePairedDelimiterXPP\descrdist[3]{d}{\lparen}{\rparen}{^{#3}}{
\ifblank{#1}{\:\cdot\:}{#1},\mathopen{}
\ifblank{#2}{\cdot\:}{#2}
}
\newcommand{\dist}[2]{\descrdist*{#1}{#2}{}}
\newcommand{\sqdist}[2]{\descrdist*{#1}{#2}2}
\DeclarePairedDelimiterXPP{\textpar}[2]{\text{\upshape{#1}}}{\lparen}{\rparen}{}{#2}
\newcommand{\conv}[1]{\textpar*{conv}{#1}}
\newcommand{\inter}[1]{\textpar*{int}{#1}}
\newcommand{\dom}[1]{\textpar*{dom}{#1}}
\newcommand{\epi}[1]{\textpar*{epi}{#1}}
\DeclarePairedDelimiterXPP{\textacc}[2]{\text{\upshape{#1}}}{\{}{\}}{}{#2}
\newcommand{\conva}[1]{\textacc*{conv}{#1}}
\DeclarePairedDelimiterXPP{\symbpar}[3]{\ifblank{#1}{}{#1}}{\lparen}{\rparen}{\ifblank{#2}{}{^#2}}{#3}
\newcommand{\cloconv}[1]{\symbpar*{\overline{\text{\upshape{conv}}}}{}{#1}}
\newcommand{\prob}[1]{\symbpar*{\P}{}{#1}}
\newcommand{\support}[2]{\ifblank{#2}{\sigma_{#1}}{\symbpar*{\sigma_{#1}}{}{#2}}}
\newcommand{\indic}[2]{\ifblank{#2}{\iota_{#1}}{\symbpar*{\iota_{#1}}{}{#2}}}
\newcommand{\proj}[2]{\ifblank{#1}{\text{\upshape{proj}}_{#2}}{\symbpar*{\text{\upshape{proj}}_{#2}}{}{#1}}}
\newcommand{\lscc}[1]{\symbpar*{\Gamma}{}{#1}}
\newcommand{\plscc}[1]{\symbpar*{\Gamma_0}{}{#1}}
\newcommand{\grad}[2]{\ifblank{#2}{\nabla #1}{\symbpar*{\nabla #1}{}{#2}}}
\newcommand{\subgrad}[2]{\ifblank{#2}{\partial #1}{\symbpar*{\partial #1}{}{#2}}}
\newcommand{\descrC}[4]{\symbpar*{\mathcal C\ifblank{#1}{}{_{#1}}}{#4}{#2\ifblank{#3}{}{\mathopen{};#3}}}
\newcommand{\descrM}[2]{\symbpar*{\M\ifblank{#1}{}{^{#1}}}{}{#2}}
\newcommand{\Fen}[2]{\ifblank{#2}{{#1}^\ast}{\symbpar*{{#1}^*}{}{#2}}}
\DeclarePairedDelimiterXPP{\symbacc}[2]{{#1}}{\{}{\}}{}{#2}
\DeclarePairedDelimiterX{\abs}[1]{\lvert}{\rvert}{#1}
\DeclarePairedDelimiterXPP{\sqabs}[1]{}{\lvert}{\rvert}{^2}{#1}
\DeclarePairedDelimiterX{\IntInt}[2]{\{}{\}}{#1,\,\ldots,#2}
\DeclarePairedDelimiterXPP{\descrBreg}[4]{B_{#1}}{\lparen}{\rparen}{}
{
\ifblank{#2}{\:\cdot\:}{#2},\mathopen{}
\left(\ifblank{#3}{\cdot\:}{#3},\mathopen{}\ifblank{#4}{\cdot\:}{#4}\right)
}
\newcommand{\xiBreg}[3]{\descrBreg*{\Xi}{#1}{#2}{#3}}
\newcommand{\fstarBreg}[3]{\descrBreg*{{\Fen f{}}}{#1}{#2}{#3}}
\newenvironment{problem}[2]
{
\begin{equation}
\ifblank{#1}{}{\tag{#1}}
\ifblank{#2}{}{\label[problem]{#2}}
}
{\end{equation}\ignorespacesafterend}
\crefname{problem}{problem}{problems}
\newenvironment{alproblem}[2]
{
\align
\ifblank{#1}{}{\tag{#1}}
\ifblank{#2}{}{\label[problem]{#2}}
}
{\endalign\ignorespacesafterend}
\crefname{alproblem}{problem}{problems}
\crefname{assum}{assumption}{assumptions}
\newcounter{algo}
\crefname{algo}{algorithm}{algorithms}
\crefname{line}{line}{lines}
\title{A nonsmooth Frank--Wolfe algorithm through a dual cutting-plane approach}
\author{Guilherme Mazanti\thanks{Université Paris-Saclay, CNRS, CentraleSupélec, Inria, Laboratoire des signaux et systèmes, 91190, Gif-sur-Yvette, France.}{ }\thanks{Fédération de Mathématiques de CentraleSupélec, 91190, Gif-sur-Yvette, France.} \and Thibault Moquet\footnotemark[1]{ }\footnotemark[2] \and Laurent Pfeiffer\footnotemark[1]{ }\footnotemark[2]}
\date{\today}
\begin{document}

\setlist[itemize, 1]{itemsep=0pt}

\maketitle

\begin{abstract}
An extension of the Frank--Wolfe Algorithm (FWA), also known as Conditional Gradient algorithm, is proposed. In its standard form, the FWA allows to solve constrained optimization problems involving $\beta$-smooth cost functions, calling at each iteration a Linear Minimization Oracle. More specifically, the oracle solves a problem obtained by linearization of the original cost function. The algorithm designed and investigated in this article, named Dualized Level-Set (DLS) algorithm, extends the FWA and allows to address a class of nonsmooth costs, involving in particular support functions. The key idea behind the construction of the DLS method is a general interpretation of the FWA as a cutting-plane algorithm, from the dual point of view. The DLS algorithm essentially results from a dualization of a specific cutting-plane algorithm, based on projections on some level sets. The DLS algorithm generates a sequence of primal-dual candidates, and we prove that the corresponding primal-dual gap converges with a rate of $O(1/\sqrt{t})$.
\end{abstract}

\medskip

\noindent \textbf{Keywords:} Frank--Wolfe algorithm, Conditional Gradient algorithm, cutting-plane algorithms, simplicial algorithms, duality in convex analysis, nonsmooth optimization.

\medskip

\noindent \textbf{Mathematics Subject Classification (2020):} 90C25 $\cdot$ 90C30 $\cdot$ 90C46.

\section{Introduction}

The Frank--Wolfe Algorithm (FWA), also known as Conditional Gradient Algorithm, is an iterative minimization algorithm which was first introduced in \cite{FW}. It aims at solving numerically problems of the form
\begin{problem}{}{pb:main_pb}
\minimize{x\in K}f(x),
\end{problem}
where $f$ is a convex function with Lipschitz-continuous gradient and $K$ is a closed convex bounded set of some Banach space $\X$. This method relies on a Linear Minimization Oracle (LMO) of the form
\begin{equation}\tag{LMO$_\mu$} \label{eq:LMO_mu}
\minimize{x\in K}\InnerProd\mu x,
\end{equation}
for some well-chosen $\mu\in\X^*$. The result of this oracle is used to update the candidate to optimality through convex combinations.

One simple choice consists in taking, at iteration $t$,
\begin{equation*}
\mu^t=\grad f{x^t},\quad \gamma_t = 2/(t+2), \quad\text{ and }\quad x^{t+1} = \gamma_t v^t+(1-\gamma_t)x^t,
\end{equation*}
where $v^t$ is a solution to \labelcref{eq:LMO_mu} with $\mu= \mu^t$. We will refer to that method as \textit{agnostic FWA}. It is well-known, (see for instance \cite{Jaggi}) that the agnostic FWA converges (in value) to a minimizer of $f$ over $K$ with a speed of order $1/t$.

There are many other possible choices for iteration updates. We mention the FWA with line-search and the fully-corrective FWA, which consist in taking
\begin{problem}{}{pb:fcfw}
x^{t+1} \in \argmin{x\in \tilde K^t} f(x),
\end{problem}
with $\tilde K^t$ respectively defined by $\conva{x^t,v^t}$ and $\conva{v^0,\ldots,v^t}$. In other words, we replace in \Cref{pb:main_pb} the feasible set $K$ by an inner polyhedric approximation, the set $\tilde K^t$. These variants enjoy in general the same sublinear convergence properties; yet improved rates of convergence can be obtained in many situations, see \cite{wirth2023acceleration}.

A great number of applications of the FWA can be found in \cite{Jaggi,pokutta2023frank} and references therein. We mention, among others, machine learning (see \cite{Jaggi,lacoste2013block}), optimal transport (see \cite{flamary2016optimal}), image processing (see \cite{joulin2014efficient}), and potential mean-field games (see \cite{lavigne2023generalized,liu2023mean}).

The article is dedicated to the design of an extension of the FWA which can handle nonsmooth cost functions, and which we call Dualized Level-Set (DLS) method. It allows to solve problems of the form
\begin{equation*}
\minimize{x\in\E_1}f(x)+\support Q{Ax-b}+\indic Kx,
\end{equation*}
where $\E_1$ and $\E_2$ are Hilbert spaces, $A\colon\E_1\to\E_2$ is a bounded linear operator, $b \in \E_2$, $f$ is convex and has Lipschitz-continuous gradient, $K \subset \E_1$ is a set for which we have an LMO, and $Q \subset \E_2$ is of the form $Q=Q_1+Q_2$, where $Q_1$ is a closed convex bounded set and $Q_2$ is a closed convex cone. The term $\support Q{Ax-b}$ is in general non-differentiable with respect to $x$. It typically describes equality constraints of the form $Ax = b$ (if $Q = \E_2$) or finitely many inequality constraints of the form $Ax \leq b$ (if $Q$ is the closed positive orthant of $\E_2 = \R^m$).

There already exist some extensions of the FWA to a nonsmooth framework. We mention the Frank-Wolfe Augmented Lagrangian (FW-AL) method from \cite{gidel2018frank}, the Con\-di\-tio\-nal-Gradient-based Augmented Lagrangian (CGAL) algorithm from \cite{yurtsever2019conditional} and the Conditional Gradient with Augmented Lagrangian and Proximal-step (CGALP) from \cite{Silveti_Falls_2020}. All rely on a Moreau regularization of the nonsmooth term ($\sigma_Q$ in our framework), which amounts to the augmented Lagrangian method when $\sigma_Q(Ax-b)$ models inequality or equality constraints. It is proved in \cite[Theorems~4.1 and 4.2]{Silveti_Falls_2020} that the sequence of candidates generated by the CGALP method is asymptotically feasible and that the associated Lagrangian values converge at a rate of $o(1/t^{b})$, where $b \in [0, 1/3)$ is a parameter of the algorithm (see also the discussion provided in \cite[Examples~3.4 and 4.4]{Silveti_Falls_2020}).

We follow a different approach, based on an interpretation of the fully-corrective FWA as a cutting-plane algorithm, from the point of view of the dual problem. This interpretation is not new and can be found in \cite[Section~7.7]{MAL-039} and in \cite{zhou2018limited}. In \cite{zhou2018limited}, the authors use a specific implementation of the Frank--Wolfe algorithm to construct a new variant of cutting-plane algorithm that enjoys a linear convergence. As we will explain below, we follow the reverse path. Let us mention that \cite{Bach2015Duality} gives a dual interpretation of the agnostic FWA as a mirror descent algorithm. We will explain more in detail the dual interpretation of the fully-corrective FWA in \Cref{sec:extension}, we only give a rough description of it in this introduction. Observe that the dual of \Cref{pb:main_pb} is given by 
\begin{equation*}
\minimize{\mu \in \X^*} \Fen f\mu + \sigma_K(-\mu).
\end{equation*}
The dual of \Cref{pb:fcfw} is the same problem, with $K$ replaced by $\tilde{K}^t$. Since $\tilde K^t \subset K$, $\sigma_{\tilde K^t}$ is a lower approximation of $\sigma_K$; moreover, since $\tilde K^t$ is polyhedral, $\sigma_{\tilde K^t}$ is piecewise affine. So the FWA algorithm (with line search or the fully corrective variant) amounts to solving at each iteration a simplified version of the dual problem obtained through a piecewise-affine lower approximation of the dual cost: this is the basic principle of every cutting-plane-type method. To be rigorous, let us note that cutting-plane methods usually approximate the whole cost function while here, only the second term is approximated. Such methods are referred to as simplicial methods in \cite{MAL-039,zhou2018limited}. We find it convenient to keep the terminology cutting-plane in this article.

In a nutshell, our general strategy for the design of the desired extension of the FWA consists in ``dualizing'' a cutting-plane-type algorithm. This strategy brings two main difficulties. First, we need convergence guaranties for the dualized algorithm (and not just for the chosen cutting-plane-type algorithm). Second, the dualized algorithm must be implementable.
Our attention has focused on the method introduced in \cite[Section~2.2.1]{lemarechal1995new}, which we will refer to as the Level-Set method. While the simplest cutting-plane algorithms simply consists in minimizing a piecewise affine approximation of the cost function, the Level-Set method updates the current candidate to optimality by projecting it onto a level set of the piecewise affine approximation. The addition of this projection step attenuates the instabilities from which the basic cutting-plane methods suffer. Moreover, it enables the authors of \cite{lemarechal1995new} to perform a quantitative convergence analysis. They indeed prove that the Level-Set method exhibits a rate of convergence of $O(1/\sqrt t)$. This rate of convergence is actually established for some quantity denoted $\Delta(t)$, which we interpret as a primal-dual gap. The fact that not only the optimality gap, but also the primal-dual gap, converges (with a certain rate) is a crucial aspect, since it allows to show the convergence of FWA-type algorithm obtained through dualization. The convergence of the primal-dual gap in the Level-Set method is our main interest for it.

Our DLS method is not a direct dualization of the Level-Set method, but rather a dualization of an extension---in two directions---of the Level-Set method.
In the original formulation of the Level-Set method, the full cost function is approximated with a piecewise affine cost. In our algorithm, the term $f^*$ of the dual problem remains unchanged, which requires us to proceed to an extension of the convergence proof of the Level-Set method in which only some part of the cost function is approximated (through a piecewise affine function). The second extension of the Level-Set method that we need to perform concerns the projection step. In the original method, the projection step is done with respect to the Euclidean norm. The dualization of this step would require the knowledge of $f^*$, which we consider as too demanding. We propose to change the Euclidean norm by a specific Bregman distance, which ultimately yields a more tractable projection step. The DLS algorithm enjoys the same convergence rate as the original Level-Set method, in $O(1/\sqrt t)$. Let us stress however that this convergence rate does not account for the possible increase of complexity of each iteration.

This article is organized as follows. In \Cref{sec:prelim} we present our notations and some preliminary results. In \Cref{sec:extension} we give an insight on the primal and dual interpretation of the Frank--Wolfe Algorithm, and we then present our Extended Level-Set (ELS) method and its theoretical guarantees. In \Cref{sec:DLS} we derive our DLS method, obtained by application of the ELS method to the dual problem, and we prove its convergence. The proofs of the technical results are postponed to \Cref{sec:proofs}. Finally, \Cref{sec:numerics} is dedicated to numerical examples.

\section{Preliminaries and notations}\label{sec:prelim}

\paragraph{General notations}

In the following, $\bar\R$ denotes the ordered set $\R\cup\{+\infty,-\infty\}$ and $\bar\R^+$ the ordered set $\R\cup\{+\infty\}$. Let $\E$ be a Hilbert space. Its inner product is denoted as $\EInnerProd{}{}{}$ and the deriving norm $\Enorm{}{}$, defined for all $x\in\E$ as $\Enorm{x}{}=\sqrt{\EInnerProd xx{}}$. When we deem that no confusion is possible, we will drop the subscript for the inner product and the norm. Let $\X$ be a Banach space endowed with the norm $\Xnorm{}{}$. We denote as $\X^*$ its topological dual, i.e., the space of continuous linear forms over $\X$, as $\descrnorm{}{\X^*}{}$ the associated dual norm, and as $\descrInnerProd{}{}{\X^*,\X}$ the natural pairing. Likewise, we will drop the subscripts when we deem that no confusion is possible. We recall that $\X^*$ endowed with $\descrnorm{}{\X^*}{}$ is a Banach space and that a Hilbert space $\E$ is a Banach space. Also, we always identify $\E^*$ with $\E$. In this section, we make the convention that spaces denoted by the letter $\E$ (possibly with subscripts) are always assumed to be Hilbert spaces, while spaces denoted by the letter $\X$ (possibly with subscripts) are always assumed to be Banach spaces.

Unless specified otherwise when required, the definitions below are taken similarly over both $\X$ and $\X^*$. Let $f\colon\X\to\bar\R$. We denote as $\epi f$ the epigraph of $f$, defined as
\begin{equation*}
\epi f = \left\{(x,\lambda)\in\X\times\R\,\big|\,\lambda\geq f(x)\right\}.
\end{equation*}
We recall that $f$ is convex (respectively (weakly) lower semicontinuous) \textit{iff} $\epi f$ is convex (respectively (weakly) closed).
We denote as $\lscc\X$ the set of convex lower semicontinuous functions from $\X$ to $\bar\R$ and as $\plscc\X$ the subset of those which are proper, i.e., which never take the value $-\infty$ and are not constant equal to $+\infty$. 

In what follows, we assume $f\in\plscc\X$. We denote as $\dom f$ the domain of $f$, which is the set
\begin{equation*}
\dom f = \left\{f\in\R\right\} = \left\{x\in\R\,\big|\,f(x)\in\R\right\}.
\end{equation*}
For any $x\in\X$, we denote as $\subgrad fx$ the set of its subgradients at $x$, i.e., the set
\begin{equation*}
\subgrad fx=\left\{\mu\in\X^*\,\big|\,\forall y\in\X,\, f(y)\geq f(x)+\InnerProd\mu{y-x}\right\},
\end{equation*}
and as $\dom{\subgrad f{}}$ the set
$\dom{\subgrad f{}}=\left\{\subgrad f{}\neq\varnothing\right\}$. Notice that $\dom{\partial f}\subset\dom f$. For a function $g\in\plscc{\X^*}$ and $\mu\in\X^*$, we define $\subgrad g\mu$ as
\begin{equation*}
\subgrad g\mu = \left\{x\in\X\,\big|\,\forall \lambda\in\X^*,\,g(\lambda)\geq g(\mu)+\InnerProd{\lambda-\mu}x\right\}.
\end{equation*}

Let $E\subset\X$, $\bar x\in\X$, and $F$ be a subset of some topological space. We denote as
\begin{itemize}
\item $\conv E$ the convex hull of $E$, and $\cloconv E$ the set $\overline{\conv E}$. We omit the parentheses when $E$ is given as the description of its elements;
\item $\indic E{}\colon\X\to\{0,+\infty\}$ the characteristic function of $E$, defined for all $x\in\X$ as
\begin{equation*}
\indic Ex=
\begin{dcases*}
0 & if $x\in E$, \\
+\infty & otherwise;
\end{dcases*}
\end{equation*}
\item $\support E{}\colon\X^*\to\bar\R$ the support function of $E$, defined for all $\mu\in\X^*$ as 
\begin{equation*}
\support E\mu=\sup_{x\in E}\InnerProd\mu x.
\end{equation*}
We recall that we have $\sigma_E=\sigma_{\cloconv E}$.
\end{itemize}
We now assume that $E$ is nonempty. We denote as
\begin{itemize}
\item $N_E(\bar x)$, the normal cone of $E$ at $\bar x$, defined as $\subgrad{\indic E{}}{\bar x}$, or in explicit terms as
\begin{equation*}
N_E(\bar x)=
\begin{dcases*}
\varnothing & if $\bar x \notin E$,\\
\left\{\mu\in\X^* \,\big|\,\forall x \in E,\, \InnerProd\mu{x-\bar x} \leq 0 \right\}& otherwise;
\end{dcases*}
\end{equation*}
\item $\dist{}E\colon\X\to\R$ the distance to $E$, defined for all $x\in\X$ as
\begin{equation*}
\dist xE=\inf_{x'\in E}\norm{x-x'};
\end{equation*}
\item $\descrC{}EF{}$ the set of continuous functions over $E$ taking values in $F$, as $\descrC{}E{}{}$ the space $\descrC{}E\R{}$, and as $\descrC bE{}{}$ the subset of $\descrC{}E{}{}$ of bounded functions over $E$;
\item $\descrM{}E$ the space of signed Radon measures over $E$, $\descrM+E$ the subset of $\descrM{}E$ of nonnegative measures, and $\prob E$ the set of probability measures over $E$, i.e., the subset of $\descrM+E$ of measures $m$ of total mass $m(E)=1$.
\end{itemize}

\paragraph{Duality}

We denote the Legendre--Fenchel transform, or conjugate, of $f$ as $\Fen f{}\colon\X^*\to\bar\R$. It is defined for all $\mu\in\X^*$ as
\begin{equation*}
\Fen f\mu=\sup_{x\in\X}\InnerProd\mu x-f(x).
\end{equation*}
Notice that $f^*$ lies in $\lscc{\X^*}$, as a supremum of convex (lower semi-)continuous functions of $\mu$.
For $g\colon\X^*\to\bar\R$, its conjugate is the function $\Fen g{}\colon\X\to\bar\R$ defined for all $x\in\X$ as
\begin{equation*}
\Fen gx=\sup_{\mu\in\X^*}\InnerProd\mu x-g(\mu),
\end{equation*}
and we denote as $f^{**}=\left(f^*\right)^*$ the biconjugate of $f$. Notice that, with this definition, for any $E\subset\X$, we have $\support E{}=\Fen{\indic E{}}{}$. It follows from the definition of the Legendre--Fenchel transform that, for any $f\colon\X\to\bar\R$, $g\colon\X^*\to\bar\R$, and $(x, \mu) \in \X\times\X^*$,
\begin{equation} \label{eq:fenchel_young}
f(x)+\Fen f\mu\geq\InnerProd\mu x\quad\text{and}\quad g(\mu)+\Fen gx\geq\InnerProd \mu x,
\end{equation}
these inequalities being known as the Fenchel--Young inequality. We also recall the Fenchel--Moreau Theorem \cite[Theorem~2.3.3]{zualinescu}.

\begin{theorem}[name = Fenchel--Moreau,label = th:FenchelMoreau]
Assume that $f\colon\X\to\bar\R^+$ and $f\not\equiv+\infty$. Then $f\in\plscc\X$ \textit{iff} $f^{**}=f$, and in that case $f^*\in\plscc{\X^*}$.
\end{theorem}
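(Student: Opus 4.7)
The plan is to treat the two directions of the equivalence separately and then to deduce the additional properness statement on $\Fen f{}$. For the easy direction $f^{**} = f \Rightarrow f \in \plscc\X$, I would observe that $f^{**}$ is, by construction, the pointwise supremum of the continuous affine functions $x \mapsto \InnerProd\mu x - \Fen f\mu$ indexed by $\mu \in \X^\ast$, so that $f^{**} \in \lscc\X$. Combined with the standing hypotheses $f > -\infty$ and $f \not\equiv +\infty$, this places $f$ in $\plscc\X$.

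The converse direction $f \in \plscc\X \Rightarrow f^{**} = f$ is the heart of the argument. The inequality $f^{**} \leq f$ is a direct consequence of the Fenchel--Young inequality \eqref{eq:fenchel_young}, since for each $x \in \X$ and $\mu \in \X^\ast$ one has $\InnerProd\mu x - \Fen f\mu \leq f(x)$. For the reverse inequality I would argue by contradiction: suppose $x_0 \in \X$ and $r \in \R$ satisfy $f^{**}(x_0) < r < f(x_0)$. Then $(x_0, r) \notin \epi f$, and since $f \in \plscc\X$, the epigraph $\epi f$ is a nonempty closed convex subset of $\X \times \R$. Applying the geometric Hahn--Banach separation theorem yields $(\phi, \beta) \in \X^\ast \times \R$ and $c \in \R$ such that
\begin{equation*}
\phi(x) + \beta t \geq c > \phi(x_0) + \beta r \quad \text{for all } (x, t) \in \epi f.
\end{equation*}
Fixing any $x \in \dom f$ and letting $t \to +\infty$ forces $\beta \geq 0$.

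In the generic case $\beta > 0$, I would divide by $\beta$ and set $\mu := -\phi/\beta$; taking the supremum of $\mu(x) - f(x) \leq -c/\beta$ over $x \in \dom f$ gives $\Fen f\mu \leq -c/\beta$, and then $f^{**}(x_0) \geq \InnerProd\mu{x_0} - \Fen f\mu > r$, a contradiction. The degenerate case $\beta = 0$ can only occur when $x_0 \notin \dom f$ (otherwise taking $x = x_0$ in the separation would give the absurdity $\phi(x_0) \geq c > \phi(x_0)$). To handle it, I would first apply the generic case at some $\bar x \in \dom f$ to produce a concrete $\bar\mu \in \dom \Fen f{}$, then set $\mu := -\phi$, so that $\mu(x) \leq -c$ on $\dom f$ while $\mu(x_0) > -c$, and estimate, for every $\lambda > 0$,
\begin{equation*}
f^{**}(x_0) \geq \InnerProd{\bar\mu + \lambda\mu}{x_0} - \Fen f{\bar\mu + \lambda\mu} \geq \InnerProd{\bar\mu}{x_0} - \Fen f{\bar\mu} + \lambda\bigl(\mu(x_0) + c\bigr).
\end{equation*}
Letting $\lambda \to +\infty$ yields $f^{**}(x_0) = +\infty = f(x_0)$, which again contradicts $f^{**}(x_0) < r < +\infty$.

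Finally, once $f^{**} = f$ is established, the additional claim $\Fen f{} \in \plscc{\X^\ast}$ follows quickly: $\Fen f{}$ is convex and lower semicontinuous as a supremum of continuous affine functions of $\mu$; the generic-case construction above has produced a concrete element of $\dom \Fen f{}$, so $\Fen f{} \not\equiv +\infty$; and if $\Fen f{\mu_0} = -\infty$ held at some $\mu_0$, the Fenchel--Young inequality would force $f \equiv +\infty$, contradicting the hypothesis. The main obstacle I anticipate is the case analysis following the Hahn--Banach separation, and in particular the treatment of the direction $\beta = 0$, which requires bootstrapping the already-proved generic case to manufacture the auxiliary anchor $\bar\mu \in \dom \Fen f{}$.
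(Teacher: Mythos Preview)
Your argument is correct and follows the classical route to Fenchel--Moreau: Fenchel--Young for $f^{**}\le f$, Hahn--Banach separation of $(x_0,r)$ from $\epi f$ for the reverse inequality, with the standard dichotomy on the sign of the vertical component $\beta$ and the bootstrapping trick in the degenerate case $\beta=0$. One phrasing point: when you write ``apply the generic case at some $\bar x\in\dom f$'', you are not re-entering the contradiction hypothesis but rather running a fresh separation of $(\bar x,f(\bar x)-1)$ from $\epi f$; since $\bar x\in\dom f$ forces $\beta'>0$ there, this manufactures the anchor $\bar\mu\in\dom{\Fen f{}}$ you need. It would be cleaner to state this explicitly.

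As for comparison with the paper: there is nothing to compare. The paper does not prove this theorem; it merely recalls it with a citation to \cite[Theorem~2.3.3]{zualinescu}. Your proof is essentially the one found in that reference (and in most convex-analysis textbooks), so it is entirely appropriate here.
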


The following result is an easy consequence of \Cref{th:FenchelMoreau} and the definitions of $f^*$, $\partial f$, and $\partial f^*$.

\begin{lemma}[label=lem:FenchelYoungEq]
Let $f\in\plscc\X$ and $(x,\mu)\in\X\times\X^*$. Then
\begin{equation*}
\mu \in \subgrad fx
\Leftrightarrow
f(x)+\Fen f\mu=\InnerProd\mu x
\Leftrightarrow
x\in\subgrad{\Fen f{}}\mu.
\end{equation*}
\end{lemma}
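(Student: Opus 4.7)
My plan is to prove the first equivalence directly from the definitions of $\subgrad fx$ and $\Fen f\mu$ using the Fenchel--Young inequality recalled in~\eqref{eq:fenchel_young}, and then to obtain the second equivalence by symmetry, applying the same argument to $\Fen f{}\in\plscc{\X^*}$ and invoking \Cref{th:FenchelMoreau} to replace the biconjugate $f^{**}$ by $f$.

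For the first equivalence, I would unfold the definition $\mu\in\subgrad fx$: it states $f(y)\geq f(x)+\InnerProd{\mu}{y-x}$ for every $y\in\X$. Using $\dom{\subgrad f{}}\subset\dom f$, we have $f(x)\in\R$, so this rearranges to $\InnerProd{\mu}{x}-f(x)\geq\InnerProd{\mu}{y}-f(y)$ for all $y\in\X$. Taking the supremum over $y$, the right-hand side becomes exactly $\Fen f\mu$, so the subgradient condition is equivalent to $f(x)+\Fen f\mu\leq\InnerProd{\mu}{x}$, and~\eqref{eq:fenchel_young} supplies the reverse inequality, forcing equality. The converse is the same chain read backwards: starting from $f(x)+\Fen f\mu=\InnerProd{\mu}{x}$ and using that $\Fen f\mu\geq\InnerProd{\mu}{y}-f(y)$ for every $y$ (by definition of $\Fen f\mu$ as a supremum), one immediately recovers $f(y)\geq f(x)+\InnerProd{\mu}{y-x}$, which is the subgradient inequality.

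For the second equivalence, I would appeal to \Cref{th:FenchelMoreau}: since $f\in\plscc\X$ takes values in $\bar\R^+$ and is not identically $+\infty$, we have $\Fen f{}\in\plscc{\X^*}$ and $f^{**}=f$. Applying the first equivalence to the function $\Fen f{}\in\plscc{\X^*}$ (with the roles of $\X$ and $\X^*$ exchanged, which is legitimate since the preliminaries set up the subdifferential and the Legendre--Fenchel transform symmetrically on both spaces) yields $x\in\subgrad{\Fen f{}}\mu$ if and only if $\Fen f\mu+f^{**}(x)=\InnerProd{\mu}{x}$, which by $f^{**}=f$ reduces exactly to the middle condition $f(x)+\Fen f\mu=\InnerProd{\mu}{x}$.

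I do not anticipate any genuine obstacle, since the statement is essentially the equality case of Fenchel--Young. The only point requiring minor care is the applicability of the first equivalence to a function on $\X^*$ for the second equivalence; this is transparent because the defining inequality of $\subgrad g\mu$ for $g\in\plscc{\X^*}$ recalled in the preliminaries is the exact mirror of the one on $\X$, and \Cref{th:FenchelMoreau} ensures that $\Fen f{}$ belongs to $\plscc{\X^*}$ so that the argument of the previous paragraph can be repeated verbatim.
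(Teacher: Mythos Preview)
Your proposal is correct and follows precisely the approach indicated by the paper, which simply states that the lemma ``is an easy consequence of \Cref{th:FenchelMoreau} and the definitions of $f^*$, $\partial f$, and $\partial f^*$'' without giving further detail. Your argument unpacks exactly these ingredients, so there is nothing to add or compare.
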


\begin{remark}[label=rem:subgrad_support]
Let $E\subset\X$ be a nonempty closed convex set and $\mu\in\X^*$. From \Cref{lem:FenchelYoungEq}, we have
\begin{equation*}
\subgrad{\support E{}}{\mu}=\argmax{v\in E}\InnerProd\mu v=\left\{v\in E \,\big|\,\support E\mu=\InnerProd\mu v\right\}.
\end{equation*}
\end{remark}

Using \Cref{rem:subgrad_support} and the fact that $\displaystyle \inf_{v\in K}\InnerProd\mu v = -\sup_{v\in K}\InnerProd\mu{-v}$, we obtain at once the following result.

\begin{corollary}[label = coro:subgrad_oracle]
Let $K\subset\X$ be a nonempty closed convex set and $\mu \in \X^*$. Then 
\begin{equation*}
\argmin{v\in K}\InnerProd\mu v = -\subgrad{\support{-K}{}}\mu
\end{equation*}
\end{corollary}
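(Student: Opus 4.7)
The plan is essentially the one sketched right before the statement: apply \Cref{rem:subgrad_support} to the nonempty closed convex set $-K$ and then undo the sign via the change of variable $v \mapsto -v$.

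More precisely, since $K$ is nonempty closed and convex, so is $-K$, and \Cref{rem:subgrad_support} yields
\begin{equation*}
\subgrad{\support{-K}{}}{\mu} = \argmax{v \in -K} \InnerProd\mu v = \{v \in -K \,|\, \InnerProd\mu v = \support{-K}\mu\}.
\end{equation*}
Next, I would observe that for any $v \in \X$, $v \in -K$ if and only if $-v \in K$, and that $\support{-K}\mu = \sup_{w \in K}\InnerProd\mu{-w} = -\inf_{w \in K}\InnerProd\mu w$, which is the identity recalled in the paragraph preceding the statement. Substituting $v = -w$ in the set above therefore rewrites it as
\begin{equation*}
\{-w \,|\, w \in K,\ -\InnerProd\mu w = -\inf_{w' \in K}\InnerProd\mu{w'}\} = -\argmin{w \in K}\InnerProd\mu w.
\end{equation*}
Multiplying both sides of $\subgrad{\support{-K}{}}{\mu} = -\argmin{w \in K}\InnerProd\mu w$ by $-1$ gives the claimed identity.

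There is no real obstacle: the only care needed is that the change of variable between $\argmax$ over $-K$ and $\argmin$ over $K$ is carried out at the level of the \emph{sets} of optimizers (not just their values), which is immediate because $w \mapsto -w$ is a bijection from $K$ to $-K$ that sends $\InnerProd\mu\cdot$ to $-\InnerProd\mu\cdot$. The result then follows directly from \Cref{rem:subgrad_support}.
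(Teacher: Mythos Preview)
Your proof is correct and follows exactly the approach indicated in the paper: apply \Cref{rem:subgrad_support} to $-K$ and use the change of variable $v\mapsto -v$ to convert the $\argmax$ over $-K$ into the $\argmin$ over $K$.
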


We next recall the Fenchel--Rockafellar duality theorem (see \cite[Theorem~2.8.3 and Corollary 2.8.5]{zualinescu}), which plays a major role in this work.

\begin{theorem}[name= Fenchel--Rockafellar, label=th:FenchelRockafellar]
Let $\X_1$ and $\X_2$ be two Banach spaces, $f \in \plscc{\X_1}$, $g \in \plscc{\X_2}$, and $A \colon \X_1 \rightarrow \X_2$ be a bounded linear operator. Assume that
\begin{equation} \label{eq:qualif_cond_gal}
0 \in \inter{A\, \dom f - \dom g}.
\end{equation}
Then, the following two problems have opposite values:
\begin{align}
\minimize{x \in \X_1} \, & f(x) + g(Ax),
\label[problem]{pb:prim_pb_fr} \\
\minimize{\mu \in \X_2^*} \, & f^*(A^* \mu) + g^*(-\mu),
\label[problem]{pb:dual_pb_fr}
\end{align}
where $A^*\colon\X_2^*\to\X_1^*$ is the adjoint operator of $A$. Moreover, if $V<+\infty$, where $V$ denotes the value of \Cref{pb:prim_pb_fr}, then \Cref{pb:dual_pb_fr} has a solution.
\end{theorem}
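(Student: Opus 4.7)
The plan is to follow the classical perturbation-function route to Fenchel--Rockafellar duality.

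First, weak duality follows directly from the Fenchel--Young inequalities \labelcref{eq:fenchel_young}: applying them to $f$ at $(x,A^*\mu)$ and to $g$ at $(Ax,-\mu)$ and summing gives, for every $x\in\X_1$ and $\mu\in\X_2^*$,
\begin{equation*}
f(x) + g(Ax) + \Fen f{A^*\mu} + \Fen g{-\mu} \geq \InnerProd{\mu}{Ax} - \InnerProd{\mu}{Ax} = 0,
\end{equation*}
so that, denoting by $V$ and $V^*$ the (possibly infinite) values of \Cref{pb:prim_pb_fr,pb:dual_pb_fr}, we have $V + V^* \geq 0$. If $V = -\infty$, this immediately yields $V^* = +\infty$ and the conclusion; so in what follows assume $V > -\infty$.

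Next, introduce the marginal function $\varphi\colon\X_2\to\bar\R$ defined by $\varphi(y) = \inf_{x\in\X_1} \bigl[ f(x) + g(Ax + y) \bigr]$. Joint convexity of $(x,y)\mapsto f(x) + g(Ax+y)$ ensures that $\varphi$ is convex, with $\varphi(0) = V$ and $\dom\varphi = \dom g - A\dom f$. A short computation—writing $\Fen\varphi\mu$ as a supremum over $(x,y)$, then substituting $z = Ax + y$ and splitting the supremum—produces
\begin{equation*}
\Fen\varphi\mu = \Fen f{-A^*\mu} + \Fen g\mu, \quad \text{and consequently} \quad \varphi^{\ast\ast}(0) = -V^*
\end{equation*}
after the sign change $\mu \leftarrow -\mu$. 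Strong duality $V = -V^*$ thus reduces to the biconjugate identity $\varphi(0) = \varphi^{\ast\ast}(0)$.

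Finally, I would exploit the qualification condition, which is by definition exactly $0 \in \inter{\dom\varphi}$. Since $\varphi$ is proper (as $V > -\infty$) and bounded above by $y\mapsto f(x_0) + g(Ax_0 + y)$ on a neighborhood of $0$ for any fixed $x_0\in\dom f$, the classical Banach-space result that a proper convex function locally bounded above at an interior point of its domain is continuous there (e.g., \cite[Theorem~2.2.9]{zualinescu}) yields continuity of $\varphi$ at $0$. This upgrades $\varphi$ to being lower semicontinuous at $0$, hence $\varphi(0) = \varphi^{\ast\ast}(0)$ and $V = -V^*$. The same continuity provides $\subgrad\varphi 0 \neq \varnothing$ whenever $V < +\infty$, and any $\mu \in \subgrad\varphi 0$ satisfies $\Fen\varphi\mu = -\varphi(0) = -V$, so that $-\mu$ attains the infimum in \Cref{pb:dual_pb_fr}. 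The main obstacle is precisely this passage from $0 \in \inter{\dom\varphi}$ to continuity of $\varphi$ at $0$: the marginal $\varphi$ is not automatically lower semicontinuous, so one cannot simply invoke \Cref{th:FenchelMoreau}, and one must rely on the local-boundedness criterion for convex continuity together with a careful handling of the case $V = -\infty$.
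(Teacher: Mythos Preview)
The paper does not prove this theorem; it merely recalls it with a citation to \cite[Theorem~2.8.3 and Corollary~2.8.5]{zualinescu}. Your perturbation-function approach is in fact the standard route taken in that reference, so in spirit you are reproducing the cited proof rather than offering an alternative.

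That said, your argument has a genuine gap at the step where you claim $\varphi$ is locally bounded above near $0$. You write that $\varphi$ is ``bounded above by $y\mapsto f(x_0)+g(Ax_0+y)$ on a neighborhood of $0$ for any fixed $x_0\in\dom f$''. The inequality $\varphi(y)\le f(x_0)+g(Ax_0+y)$ is correct for all $y$, but the right-hand side is finite on a neighborhood of $0$ only if $Ax_0\in\inter{\dom g}$, which the qualification condition $0\in\inter{A\,\dom f-\dom g}$ does \emph{not} guarantee. In general Banach spaces, $0\in\inter{\dom\varphi}$ alone does not force a convex function to be locally bounded above, and $\varphi$, being a marginal, is not a priori lower semicontinuous either---so neither \Cref{th:FenchelMoreau} nor the local-boundedness criterion you cite applies directly.

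The missing ingredient is precisely what makes the Banach-space version nontrivial: one needs a Baire-category or Robinson--Ursescu type argument, exploiting the lower semicontinuity of $f$ and $g$ and the completeness of $\X_1$, to show that the interior condition on $A\,\dom f-\dom g$ forces uniform control on the pairs $(x,z)\in\dom f\times\dom g$ realizing $Ax-z$ near $0$, and hence local boundedness of $\varphi$. This is the substantive content of the proof in \cite{zualinescu}; the rest of your outline (weak duality, the computation of $\Fen\varphi{}$, and the subdifferential argument for attainment) is correct.
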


\Cref{pb:dual_pb_fr} will be called dual problem to \eqref{pb:prim_pb_fr}. When a problem and its dual have opposite values, we say that they are in strong duality. We will call primal-dual gap of \Cref{pb:prim_pb_fr,pb:dual_pb_fr} the quantity $\Delta(x, \mu)$ defined for $(x,\mu)\in\X_1\times\X_2^*$ by
\begin{equation} \label{eq:primal_dual_gap_gal}
\Delta(x,\mu) = \big( f(x) + g(Ax) \big) + \big( f^*(A^* \mu) + g^*(-\mu) \big).
\end{equation}
We next collect some elementary properties of the primal-dual gap.

\begin{corollary}[label=coro:fenchel_rockafellar]
Let $(x,\mu)\in\X_1\times\X_2^*$. Then $\Delta(x,\mu) \geq 0$.
Moreover, if \labelcref{eq:qualif_cond_gal} holds true, then the following three statements are equivalent:
\begin{enumerate}[label = \upshape(\itshape\roman*\,\upshape), ref = \itshape\roman*]
\item\label{ln:null_pdg} $\Delta(x,\mu)=0$,
\item\label{ln:solutions} $x$ is a solution to \Cref{pb:prim_pb_fr} and $\mu$ is a solution to \Cref{pb:dual_pb_fr},
\item\label{ln:opti_cond} $A^*\mu \in \partial f(x)$ and $-\mu \in \partial g(Ax)$,
\item\label{ln:opti_cond_bis} $x \in \partial f^\ast(A^*\mu)$ and $A x \in \partial g^\ast(-\mu)$.
\end{enumerate}
\end{corollary}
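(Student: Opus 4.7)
The plan is to reduce everything to the Fenchel--Young inequality of \labelcref{eq:fenchel_young} combined with \Cref{lem:FenchelYoungEq} and \Cref{th:FenchelRockafellar}. The key observation is the algebraic identity $\InnerProd{A^*\mu}{x} = \InnerProd{\mu}{Ax}$, which allows one to split the primal-dual gap into two separately non-negative Fenchel--Young terms:
\begin{equation*}
\Delta(x,\mu) = \bigl( f(x) + \Fen f{A^*\mu} - \InnerProd{\mu}{Ax} \bigr) + \bigl( g(Ax) + \Fen g{-\mu} - \InnerProd{-\mu}{Ax} \bigr).
\end{equation*}

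First I would establish $\Delta(x,\mu) \geq 0$ unconditionally: applying \labelcref{eq:fenchel_young} to $(f,A^*\mu,x)$ and $(g,-\mu,Ax)$ and summing yields the desired bound, with no appeal to the qualification condition needed. Next, assuming \labelcref{eq:qualif_cond_gal}, \Cref{th:FenchelRockafellar} gives strong duality: denoting by $V$ the common value, I rewrite
\begin{equation*}
\Delta(x,\mu) = \bigl[ f(x) + g(Ax) - V \bigr] + \bigl[ \Fen f{A^*\mu} + \Fen g{-\mu} - (-V) \bigr].
\end{equation*}
Each bracket is non-negative, hence $\Delta(x,\mu) = 0$ if and only if both vanish, which proves \labelcref{ln:null_pdg}$\Leftrightarrow$\labelcref{ln:solutions}. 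The case $V=+\infty$ must be ruled out (otherwise no $(x,\mu)$ achieves $\Delta=0$ because $f(x)+g(Ax)=+\infty$), and when $V=-\infty$ the dual has no feasible point so again $\Delta=+\infty$; both degenerate situations are consistent with the equivalence.

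For \labelcref{ln:null_pdg}$\Leftrightarrow$\labelcref{ln:opti_cond}, I use the splitting displayed above: since each of the two summands is non-negative by \labelcref{eq:fenchel_young}, $\Delta(x,\mu)=0$ forces both to vanish, i.e.\ Fenchel--Young equality holds at the pairs $(x,A^*\mu)$ for $f$ and $(Ax,-\mu)$ for $g$. Applying \Cref{lem:FenchelYoungEq} to $f \in \plscc{\X_1}$ and $g \in \plscc{\X_2}$ translates these equalities into $A^*\mu \in \subgrad f x$ and $-\mu \in \subgrad g {Ax}$, and conversely. The same lemma provides \labelcref{ln:opti_cond}$\Leftrightarrow$\labelcref{ln:opti_cond_bis} via the symmetric characterization $\mu \in \subgrad f x \Leftrightarrow x \in \subgrad{\Fen f{}}{\mu}$, applied to each subdifferential condition separately.

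There is no real obstacle here; the only care needed is to justify the symmetric form of \Cref{lem:FenchelYoungEq} for $g$ (whose first variable lives in $\X_2$, not in $\X_2^{**}$), but this is exactly the setting of that lemma as stated, since $f$ and $g$ belong to $\plscc{}$ of their respective Banach spaces and the identification of $\X_2$ with a subspace of $\X_2^{**}$ is harmless for subdifferential equivalences.
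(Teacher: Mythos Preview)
Your proposal is correct and follows essentially the same route as the paper: the same decomposition of $\Delta(x,\mu)$ into two Fenchel--Young remainders, nonnegativity via \labelcref{eq:fenchel_young}, the equivalence \labelcref{ln:null_pdg}$\Leftrightarrow$\labelcref{ln:solutions} via \Cref{th:FenchelRockafellar}, and \labelcref{ln:null_pdg}$\Leftrightarrow$\labelcref{ln:opti_cond}$\Leftrightarrow$\labelcref{ln:opti_cond_bis} via \Cref{lem:FenchelYoungEq}. Your extra discussion of the degenerate cases $V=\pm\infty$ is a welcome bit of care that the paper omits, but the overall argument is the same.
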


\begin{proof}
Observe that, by definition of the adjoint operator,
\begin{equation*}
\Delta(x,\mu) = \big( f(x) + f^*(A^*\mu) - \descrInnerProd{A^*\mu}x{\X_1^*,\X_1} \big)
+ \big( g(Ax) + g^*(-\mu) + \descrInnerProd\mu{Ax}{\X_2^*,\X_2} \big).
\end{equation*}
The nonnegativity of the primal-dual gap then follows from the Fenchel--Young inequality \labelcref{eq:fenchel_young}. 
The equivalence between \labelcref{ln:null_pdg} and \labelcref{ln:solutions} is a consequence of \Cref{th:FenchelRockafellar}. Also, $\Delta(x,\mu)$ is null \emph{iff} both terms in the above decomposition are null. This is equivalent to \labelcref{ln:opti_cond} and to \labelcref{ln:opti_cond_bis}, by \Cref{lem:FenchelYoungEq}, which concludes the proof.
\end{proof}

\paragraph{Bregman distances}

Let $\Xi\in\plscc\E$ be $\beta$-strongly convex, i.e., such that $\Xi-\frac\beta2\Enorm{}{}^2$ is convex.
We denote as $B_\Xi\colon\E\times\dom\Xi\times\E\to\bar\R$ the Bregman distance associated with $\Xi$, which we define for all $\mu\in\E,\mu'\in\dom\Xi$, and $w\in\E$ as
\begin{equation}
\label{eq:bregman_gal}
\xiBreg\mu{\mu'}w=
\begin{dcases*}
+\infty & if $w\not\in\subgrad\Xi{\mu'}$, \\
\Xi(\mu)-\Xi(\mu')-\InnerProd{\mu-\mu'}w & otherwise.
\end{dcases*}
\end{equation}
Let us note that this definition of the Bregman distance is not quite the standard one, in which one usually requires $\Xi$ to be differentiable. In this case, one can eliminate $w$ from the above definition and replace it by $\grad\Xi{\mu'}$, which yields the standard definition.
In particular, when $\Xi=\frac12\sqnorm{}$, we have $B_{\Xi}(\mu,(\mu',w))= \frac12 \sqnorm{\mu'-\mu}+\indic{\{0\}}{w-\mu'}$.

\begin{remark}[label=rem:breg_sqdist]
Notice that, since $\Xi$ is $\beta$-strongly convex, we have for all $\mu\in\E,\mu'\in\dom\Xi$, and $w\in\E$
\begin{equation*}
\xiBreg\mu{\mu'}w\geq\frac\beta2\sqnorm{\mu-\mu'}
\end{equation*}
and thus $\xiBreg\mu{\mu'}w=0$ \textit{iff} $\mu=\mu'$ and $w\in\subgrad\Xi{\mu'}$.
\end{remark}

The following lemma follows from direct calculations.

\begin{lemma}[label=lem:identity_Breg]
For all $a,b\in\dom{\subgrad{\Xi}{}}$, $c\in\dom\Xi$, $w_a\in\subgrad\Xi a$, and $w_b\in\subgrad\Xi b$, we have the identity
\begin{equation*}
\xiBreg cb{w_b}+\xiBreg ba{w_a}-\xiBreg ca{w_a}=\InnerProd{c-b}{w_a-w_b}.
\end{equation*}
\end{lemma}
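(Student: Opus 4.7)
The plan is to simply unfold the definition of each Bregman distance and cancel terms. Since $a, b \in \dom{\subgrad{\Xi}{}}$ with chosen subgradients $w_a \in \subgrad \Xi a$ and $w_b \in \subgrad \Xi b$, and since $c \in \dom \Xi$, all three quantities $\xiBreg cb{w_b}$, $\xiBreg ba{w_a}$, and $\xiBreg ca{w_a}$ fall into the ``otherwise'' branch of the definition \eqref{eq:bregman_gal} and are finite real numbers given by the explicit formula $\Xi(\cdot) - \Xi(\cdot) - \InnerProd{\cdot}{\cdot}$. So there is no measurability or infinite-value issue to address; the identity is purely algebraic.

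The first step is to write
\begin{align*}
\xiBreg cb{w_b} &= \Xi(c) - \Xi(b) - \InnerProd{c-b}{w_b}, \\
\xiBreg ba{w_a} &= \Xi(b) - \Xi(a) - \InnerProd{b-a}{w_a}, \\
\xiBreg ca{w_a} &= \Xi(c) - \Xi(a) - \InnerProd{c-a}{w_a}.
\end{align*}
The second step is to add the first two and subtract the third. The terms $\Xi(c)$, $\Xi(a)$, and $\Xi(b)$ all cancel, leaving
\begin{equation*}
\xiBreg cb{w_b} + \xiBreg ba{w_a} - \xiBreg ca{w_a} = -\InnerProd{c-b}{w_b} - \InnerProd{b-a}{w_a} + \InnerProd{c-a}{w_a}.
\end{equation*}
The third step is to group the two terms carrying $w_a$, using $(c-a) - (b-a) = c-b$, which gives $\InnerProd{c-b}{w_a} - \InnerProd{c-b}{w_b} = \InnerProd{c-b}{w_a - w_b}$, as required.

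There is no real obstacle here: the lemma is a three-line computation whose only subtlety is checking that all Bregman distances are given by the finite expression rather than $+\infty$, and this is guaranteed by the hypotheses $w_a \in \subgrad \Xi a$ and $w_b \in \subgrad \Xi b$ together with $a, b \in \dom \Xi$ (which follows from $\dom{\subgrad \Xi{}} \subset \dom \Xi$) and $c \in \dom \Xi$.
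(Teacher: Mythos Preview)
Your proof is correct and is exactly the ``direct calculation'' the paper alludes to; there is no separate argument in the paper beyond that remark.
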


\paragraph{Coercive functions}

Let $f \colon \X \rightarrow \bar{\R}$ and $\alpha\in\R$. We call sublevel set of $f$ at height $\alpha$ the set
\begin{equation*}
\{f\leq\alpha\}=\left\{x\in\X\,\big|\,f(x)\leq\alpha\right\}.
\end{equation*}
We say that $f$ is coercive if
\begin{equation*}
\converge{f(x)}{+\infty}{\norm x}{+\infty}.
\end{equation*}

\begin{remark}
Notice that a function $f$ is coercive \textit{iff} its sublevel sets are bounded. Also notice that the sublevel sets of a convex function are convex.
\end{remark}

When $\E_1$ and $\E_2$ are two Hilbert spaces, with inner products $\EInnerProd{}{}1$ and $\EInnerProd{}{}2$ respectively, unless specified otherwise, we endow the product space $\E=\E_1\times\E_2$ with the canonical inner product, defined for all $x_1,y_1\in\E_1$ and all $x_2,y_2\in\E_2$ as
\begin{equation*}
\EInnerProd{(x_1,x_2)}{(y_1,y_2)}{}=\EInnerProd{x_1}{y_1}1+\EInnerProd{x_2}{y_2}2.
\end{equation*}
Let $f_1\colon\X_1\to\bar\R^+$ and $f_2\colon\X_2\to\bar\R^+$. We denote their direct sum as $f_1\oplus f_2\colon\X_1\times\X_2\to\bar\R^+$. It is defined, for all $x_1\in\X_1, x_2\in\X_2$ as
\begin{equation*}
f_1\oplus f_2(x_1,x_2) = f_1(x_1)+f_2(x_2).
\end{equation*}
The proof of the following lemma is straightforward.

\begin{lemma}[label=lem:sum_coer]
Let $\E_1$, $\E_2$ be two Hilbert spaces. Let $f_1\colon\E_1\to\bar\R^+$ and $f_2\colon\E_2\to\bar\R^+$ be two coercive functions. Assume that both $f_1$ and $f_2$ are bounded from below. Then $f_1\oplus f_2$ is coercive.
\end{lemma}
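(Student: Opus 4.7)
The plan is to verify coercivity directly from the definition. I fix an arbitrary target value $A \in \R$ and look for $R > 0$ such that $\norm{(x_1,x_2)} \geq R$ implies $(f_1 \oplus f_2)(x_1, x_2) = f_1(x_1) + f_2(x_2) \geq A$. Let $m \in \R$ denote a common lower bound for $f_1$ and $f_2$, whose existence is guaranteed by the assumption.

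The quantitative tool is the decomposition $\sqnorm{(x_1,x_2)} = \sqnorm{x_1} + \sqnorm{x_2}$, coming from the canonical inner product on $\E_1 \times \E_2$. It forces $\max(\norm{x_1}, \norm{x_2}) \geq R/\sqrt{2}$ whenever $\norm{(x_1,x_2)} \geq R$. By coercivity of $f_1$, there exists $R_1 > 0$ such that $\norm{x_1} \geq R_1$ implies $f_1(x_1) \geq A - m$, and similarly a threshold $R_2 > 0$ for $f_2$. Choosing $R \coloneqq \sqrt{2}\max(R_1, R_2)$, whenever $\norm{(x_1,x_2)} \geq R$, at least one index $i \in \{1,2\}$ satisfies $\norm{x_i} \geq R_i$, so $f_i(x_i) \geq A - m$, while the remaining component contributes at least $m$ by lower-boundedness. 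Adding the two inequalities yields $f_1(x_1) + f_2(x_2) \geq A$, concluding the proof.

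A slightly shorter alternative uses the sublevel-set characterization of coercivity recalled just above the statement: for any $\alpha \in \R$, one has $\{f_1 \oplus f_2 \leq \alpha\} \subset \{f_1 \leq \alpha - m\} \times \{f_2 \leq \alpha - m\}$, which is a product of two bounded sets by coercivity of $f_1$ and $f_2$, hence bounded in $\E_1 \times \E_2$. There is no real obstacle here; the only subtle point worth flagging is that lower-boundedness is genuinely needed to decouple the sum---without it, a very negative value of one $f_i$ could offset a large value of the other---and the canonical product norm is precisely what makes the ``at least one component is large'' argument quantitative.
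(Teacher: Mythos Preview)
Your proof is correct; both the direct argument and the sublevel-set alternative are valid and complete. The paper itself omits the proof entirely, stating only that it is straightforward, so there is nothing to compare against.
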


We also state here the next lemma on the duality between functions with Lipschitz gradients and strongly convex functions, whose proof can be found in \cite[Theorem~18.15 and Corollary~11.16]{bauschke2011convex}.

\begin{lemma} [label=lem:f_star_coercive]
Let $f\in\plscc\E$ and $\beta>0$. Then
$f$ is Fréchet differentiable and its gradient $\grad f{}$ is $\beta$-Lipschitz continuous if and only if the function $\Fen f{}$ is $1/\beta$-strongly convex.
In this case, $f^*$ is also coercive.
\end{lemma}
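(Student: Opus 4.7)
The plan is to handle the two directions of the equivalence separately via Legendre--Fenchel duality, and then obtain coercivity of $\Fen f{}$ as an immediate byproduct of the strong convexity.

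For the forward direction, assume $f$ is Fréchet differentiable with $\beta$-Lipschitz gradient. The descent lemma then yields the quadratic upper bound
\begin{equation*}
f(y) \leq f(x) + \InnerProd{\grad fx}{y-x} + \frac{\beta}{2}\sqnorm{y-x}
\end{equation*}
for all $x,y\in\E$. Viewed differently, this says that $\frac{\beta}{2}\sqnorm{\cdot}-f$ is a convex function (after a translation), i.e., $f$ can be written as $\frac{\beta}{2}\sqnorm{\cdot}-h$ for some convex $h$, or equivalently, that $\frac{\beta}{2}\sqnorm{\cdot}-f$ is concave. Taking conjugates and exploiting the fact that the conjugate of $\frac\beta2\sqnorm{\cdot}$ is $\frac1{2\beta}\sqnorm{\cdot}$ together with standard rules for conjugation of differences (via infimal convolution and the fact that conjugation reverses upper bounds into lower bounds on $f^{\ast}$), the upper bound on $f$ translates into the lower bound
\begin{equation*}
\Fen f\lambda \geq \Fen f\mu + \InnerProd{x}{\lambda-\mu} + \frac{1}{2\beta}\sqnorm{\lambda-\mu}
\end{equation*}
for any $x\in\subgrad{\Fen f{}}\mu$ and all $\lambda\in\E$, which is precisely $1/\beta$-strong convexity of $\Fen f{}$.

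For the reverse direction, assume $\Fen f{}$ is $1/\beta$-strongly convex. By \Cref{th:FenchelMoreau}, $f = f^{**}$, so it suffices to analyze the biconjugate. The strong convexity of $\Fen f{}$ means $\Fen f{} - \frac{1}{2\beta}\sqnorm{\cdot}$ is convex and proper lsc, and conjugating this relation (using the inf-convolution formula for the conjugate of a sum under the appropriate qualification, which holds because $\frac{1}{2\beta}\sqnorm{\cdot}$ is everywhere finite and continuous) identifies $f = f^{**}$ as the inf-convolution of another convex function with $\frac{\beta}{2}\sqnorm{\cdot}$. Such a Moreau envelope-type representation is classically known to be everywhere Fréchet differentiable with $\beta$-Lipschitz gradient.

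Finally, for coercivity of $\Fen f{}$: pick any $\mu_0\in\dom\subgrad{\Fen f{}}{}$ (nonempty since $\Fen f{}\in\plscc{\E}$ under the standing assumption) and any $x_0\in\subgrad{\Fen f{}}{\mu_0}$. Then $1/\beta$-strong convexity directly yields
\begin{equation*}
\Fen f\mu \geq \Fen f{\mu_0} + \InnerProd{x_0}{\mu-\mu_0} + \frac{1}{2\beta}\sqnorm{\mu-\mu_0},
\end{equation*}
and the right-hand side tends to $+\infty$ as $\norm\mu\to+\infty$, giving coercivity.

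The main obstacle I expect is the reverse direction: carefully justifying that $f = f^{**}$ is everywhere Fréchet differentiable (not merely subdifferentiable with bounded subgradients) and that its gradient is globally $\beta$-Lipschitz. This requires the machinery of Moreau envelopes / inf-convolution with a quadratic, and is the reason the statement simply cites \cite[Theorem~18.15 and Corollary~11.16]{bauschke2011convex} rather than reproducing the argument.
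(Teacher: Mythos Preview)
The paper does not provide a proof of this lemma at all; it simply states that the proof can be found in \cite[Theorem~18.15 and Corollary~11.16]{bauschke2011convex}. So there is no ``paper's own proof'' to compare against, and your closing paragraph already recognizes this.

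Your sketch is broadly on the right track (descent lemma in one direction, Moreau-envelope representation in the other, and the quadratic lower bound for coercivity), but there is a slip in the forward direction: you write that $\frac{\beta}{2}\sqnorm{\cdot}-f$ is ``a convex function'' and then, in the same sentence, ``or equivalently, that $\frac{\beta}{2}\sqnorm{\cdot}-f$ is concave.'' The correct statement is that $\frac{\beta}{2}\sqnorm{\cdot}-f$ is \emph{convex} (this is exactly what the descent lemma gives), and hence $f = \frac{\beta}{2}\sqnorm{\cdot}-h$ with $h$ convex. The passage from this to $1/\beta$-strong convexity of $\Fen f{}$ is also left vague (``standard rules for conjugation of differences''); a cleaner route is to observe that the descent lemma, applied with $x\in\subgrad{\Fen f{}}\mu$ so that $\mu=\grad fx$, gives directly after conjugation the subgradient inequality you display. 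None of this is a fatal gap, but since the paper itself defers entirely to the cited reference, the honest thing is to do the same rather than present a sketch with loose ends.
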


\paragraph{Perspective functions}

Let $f\in\plscc\E$. We denote its perspective function as $\tilde f\colon\E\times\R \rightarrow \bar\R^+$ and we define it, following \cite{combettes2018perspective}, as
\begin{equation*}
\tilde f(x,s)
= \,
\begin{dcases*}
sf\left(\frac xs\right) & if $s> 0$, \\
\sup_{y\in\dom f}f(y+x)-f(y) & if $s=0$, \\
+ \infty & otherwise.
\end{dcases*}
\end{equation*}

We shall need in the sequel the following property of perspective functions, which can be found in \cite[Proposition~2.3]{combettes2018perspective}.

\begin{lemma}[label=lem:tildef]
Let $f\in\plscc\E$.
Then $\tilde f\in\plscc{\E\times\R}$. Moreover,
\begin{equation*}
(\tilde f)^*(\mu,z) = \indic{\epi{f^*}}{\mu,-z}.
\end{equation*}
If $f$ is Fréchet-differentiable, then for all $s>0$ and $x \in \E$, $\tilde{f}$ is Fréchet-differentiable at $(x,s)$ and
\begin{equation} \label{eq:tildef}
\grad{\tilde f}{x,s}=\left(\grad f{\frac xs},f\left(\frac xs\right)-\InnerProd{\grad f{\frac xs}}{\frac xs}{}\right).
\end{equation}
Finally, if $\grad f{}$ is continuous at $\frac xs$, then $\nabla{\tilde f}{}$ is continuous at $(x,s)$.
\end{lemma}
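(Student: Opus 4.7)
The plan is to establish the four assertions in sequence, with the first two (properness, lower semicontinuity, convexity, and the conjugate formula) falling out of a single conjugation computation.

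First, I would introduce the auxiliary function $h \colon \E \times \R \to \bar\R^+$ defined by $h(\mu, z) = \indic{\epi{\Fen f{}}}{\mu, -z}$. Since $f \in \plscc\E$, \Cref{th:FenchelMoreau} gives $\Fen f{} \in \plscc\E$, so $\epi{\Fen f{}}$ is a nonempty closed convex subset of $\E \times \R$ and $h \in \plscc{\E \times \R}$. I would then compute $h^*$ directly: for $(x, s) \in \E \times \R$,
\begin{equation*}
h^*(x, s) = \sup_{\mu \in \dom{\Fen f{}}} \left( \InnerProd{\mu}{x} + \sup_{z \leq -\Fen f\mu} sz \right).
\end{equation*}
The inner supremum equals $-s\,\Fen f\mu$ when $s \geq 0$ and $+\infty$ when $s < 0$ (the latter because $\dom{\Fen f{}} \neq \varnothing$). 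Thus, for $s > 0$, factoring out $s$ and applying \Cref{th:FenchelMoreau} again yields $h^*(x, s) = s\,\Fen{\Fen f{}}{x/s} = s\,f(x/s)$; for $s < 0$, $h^*(x, s) = +\infty$; and for $s = 0$, $h^*(x, 0) = \support{\dom{\Fen f{}}}{x}$, which by the classical identification of the support function of $\dom{\Fen f{}}$ with the recession function of $f$ equals $\sup_{y \in \dom f}\bigl(f(y + x) - f(y)\bigr)$. Hence $h^* = \tilde f$. Applying \Cref{th:FenchelMoreau} once more, this time to $h$, gives simultaneously $\tilde f = h^* \in \plscc{\E \times \R}$ and $(\tilde f)^* = h^{**} = h$, i.e., the announced conjugate formula.

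For the differentiability statement, fix $s > 0$ and assume $f$ Fréchet-differentiable at $x/s$. The map $(x', s') \mapsto x'/s'$ is smooth on $\E \times (0, +\infty)$, so by the chain rule $(x', s') \mapsto f(x'/s')$ is Fréchet-differentiable at $(x, s)$; multiplying by $s'$ preserves differentiability. Computing the partial derivatives via the chain and product rules reproduces exactly \labelcref{eq:tildef}. Continuity of $\grad{\tilde f}{}$ at $(x, s)$ is then immediate from that same formula: both $\grad f{x/s}$ and the scalar $f(x/s) - \InnerProd{\grad f{x/s}}{x/s}$ depend continuously on $(x, s)$ near the reference point whenever $\grad f{}$ is continuous at $x/s$ (continuity of $f$ at $x/s$ being automatic from Fréchet-differentiability there).

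The main obstacle in this plan is the $s = 0$ case of the conjugate computation, which rests on the identity $\support{\dom{\Fen f{}}}{x} = \sup_{y \in \dom f}\bigl(f(y + x) - f(y)\bigr)$. I would invoke this classical duality between the support function of $\dom{\Fen f{}}$ and the recession function of $f$ as a known fact rather than reprove it; everything else reduces to routine conjugation and elementary calculus arguments.
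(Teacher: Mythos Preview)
Your argument is correct. The paper does not actually prove this lemma: it simply records it with a pointer to \cite[Proposition~2.3]{combettes2018perspective}. Your route---defining $h(\mu,z)=\indic{\epi{\Fen f{}}}{\mu,-z}$, computing $h^*$ case-by-case in $s$, and then invoking \Cref{th:FenchelMoreau} to get both $\tilde f\in\plscc{\E\times\R}$ and $(\tilde f)^*=h$ in one stroke---is a clean self-contained proof that the paper elects not to give. The only step you defer to the literature is the $s=0$ identification $\support{\dom{\Fen f{}}}{x}=\sup_{y\in\dom f}\bigl(f(y+x)-f(y)\bigr)$, i.e., that the recession function of $f$ is the support function of $\dom{\Fen f{}}$; this is indeed standard (e.g., Rockafellar or Bauschke--Combettes), so treating it as known is reasonable. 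The differentiability and continuity claims are, as you say, a direct chain-rule computation on $\E\times(0,+\infty)$ and raise no issues.
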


\section{The Extended Level-Set method} \label{sec:extension}

We introduce in \Cref{subsec:dualization_fwa} a prototype of the Frank--Wolfe algorithm, which contains as particular cases both the fully-corrective FWA and our DLS method. We give a dual interpretation of this method as a general prototype for a cutting-plane algorithm. In \Cref{subsec:def_algo}, we introduce our extension of the Level-Set method of \cite{lemarechal1995new}, which we call Extended Level-Set (ELS) method. This method is a general cutting-plane-type algorithm, which will later yield the desired extension of the FWA by dualization. We give a convergence result for the ELS method in \Cref{subsec:convergence}.

\subsection{A dual point of view on the fully-corrective FWA} \label{subsec:dualization_fwa}

The FWA aims at solving problems of the form
\begin{problem}{$p$}{pb:FW_primal}
\minimize{x\in\X}f(x)+\indic Kx,
\end{problem}
where $\X$ is a Banach space, $K$ is a nonempty closed convex subset of $\X$, and $f \in \plscc\X$.
The dual problem of \Cref{pb:FW_primal} is
\begin{problem}{$d$}{pb:FW_dual}
\minimize{\mu\in\X^*} f^*(\mu) + \support{-K}{\mu}.
\end{problem}
The primal dual-gap of \Cref{pb:FW_primal,pb:FW_dual} is then given, for $(x,\mu) \in \X \times \X^*$, by
\begin{equation*}
\Delta(x,\mu)= f(x)+\indic Kx + \Fen f\mu + \support{-K}\mu,
\end{equation*}
following \labelcref{eq:primal_dual_gap_gal}. By \Cref{coro:fenchel_rockafellar}, we have $\Delta(x,\mu) \geq 0$.
Let us assume that $0 \in \inter{\dom{f}- K}$. Then \Cref{th:FenchelRockafellar} ensures that \Cref{pb:FW_primal,pb:FW_dual} are in strong duality. Moreover, by \Cref{coro:fenchel_rockafellar}, for any pair $(x,\mu) \in \X \times \X^\ast$, $x$ and $\mu$ are respectively solutions to \Cref{pb:FW_primal,pb:FW_dual} \emph{iff} $\Delta(x,\mu)=0$.

\Cref{alg:fwa} below is a general form of the Frank--Wolfe algorithm.
It relies on the linear minimization oracle LMO$\colon \X^* \to K$, which is such that
\begin{equation}\tag={LMO}
\forall \mu \in \X^*, \quad
\text{LMO}(\mu)\in\argmin{x\in K}\InnerProd\mu x.
\end{equation}
\Cref{alg:fwa} covers the classical fully-corrective FWA, in the case where $f$ is continuously differentiable with a Lipschitz-continuous gradient: it suffices to fix a point $x^0 \in K$, to define $\mu^0= \nabla f(x^0)$, $K^{-1}= \{ x^0 \}$, and finally, at each iteration, to define $\mu^{t+1}$ as $\nabla f(\hat{x}^{t+1})$ in the last step. The \textbf{Dual update} step is facultative and can be omitted in general. Notice that in the case of the fully-corrective FWA, this step is implementable without explicit knowledge of $f^*$ and $\sigma_{-K}$
since, for $\mu^{t}=\nabla f(\hat{x}^{t})$, we have
\begin{equation*}
f^*(\mu^{t}) = \InnerProd{\mu^t}{\hat x^t}
- f(\hat{x}^{t})
\quad \text{and} \quad
\sigma_{-K}(\mu^{t})= -\InnerProd{\mu^t}{v^t},
\end{equation*}
by \Cref{lem:FenchelYoungEq} and \Cref{coro:subgrad_oracle}.

\begin{figure}[htb]
\begin{minipage}{0.45\linewidth}
\begin{algorithm}[H]
\SetKwBlock{Oracle}{Oracle:}{}
\SetKwBlock{DualUpdate}{Dual update:}{}
\SetKwBlock{PrimalUpdate}{Primal update:}{}
\SetKwBlock{DualCandidate}{Dual candidate:}{}
\SetKwBlock{Pruning}{Pruning:}{}
\caption{FWA for \Cref{pb:FW_primal}}
\label[algo]{alg:fwa}
\textbf{Require:} $\mu^0 \in \dom{f^*}$\;
Find $K^{-1}\subset K$ such that \\
\qquad $0 \in \inter{\dom{f}- K^{-1}}$\;
\For{$t=0,\ldots$}{
\emph{Available at iteration $t$:}\\
\quad $\mu^t\in\E$, $K^{t-1}\subset K$\;
\Oracle{
Set $v^t = \text{LMO}(\mu^t)$\;
}
\smallskip
\DualUpdate{
\emph{Optional.}
Take $\hat\mu^t$ a solution to
\begin{alg:equation*} \minimize{\mu\in\IntInt{\mu^0}{\mu^t}}f^*(\mu)+\support{-K}\mu
\end{alg:equation*}
Set $\bar h^t = \Fen f{\hat\mu^t}+\support{-K}{\hat\mu^t}$\;
}
\smallskip
\PrimalUpdate{Set $K^t= K^{t-1} \cup \{ v^t \}$\;
Find a solution $\hat x^t$ of
\begin{alg:problem}{$p^t$}{pb:pt}
\minimize{x\in \cloconv{K^t}}f(x)
\end{alg:problem}
Set $\underline h^t =-f\left(\hat x^{t+1}\right)$\;
}
\smallskip
\DualCandidate{
Generate a new candidate $\mu^{t+1}$.
}
\smallskip
}
\end{algorithm}
\end{minipage}
\hspace{0.03\linewidth}
\begin{minipage}{0.49\linewidth}
\begin{algorithm}[H]
\label[algo]{alg:dfwa}
\caption{Dual FWA for \Cref{pb:FW_dual}}
\SetKwBlock{Oracle}{Oracle:}{}
\SetKwBlock{DualUpdate}{Dual update:}{}
\SetKwBlock{PrimalUpdate}{Primal update:}{}
\SetKwBlock{DualCandidate}{Dual candidate:}{}
\SetKwBlock{Pruning}{Pruning:}{}
\textbf{Require:} $\mu^0 \in \dom{f^*}$\;
Find $K^{-1}\subset K$ such that \\
\qquad $0 \in \inter{\dom{f}- K^{-1}}$\;
\For{$t=0,\ldots$}{
\emph{Available at iteration $t$:}\\
\quad $\mu^t\in\E$, $K^{t-1}\subset K$\;
\Oracle{
Find $v^t\in-\subgrad{\support{-K}{}}{\mu^t}$\;
}
\smallskip
\DualUpdate{
Take a solution $\hat\mu^t$ to
\begin{alg:equation*}
\minimize{\mu\in\IntInt{\mu^0}{\mu^t}}\Fen f\mu+\support{-K}\mu
\end{alg:equation*}
Set $\bar h^t=\Fen f{\hat\mu^t}+\support{-K}{\hat\mu^t}$\;
}
\smallskip
\PrimalUpdate{Set $K^t= K^{t-1} \cup\{v^t\}$\;
Find a solution $\nu^{t+1}$ of
\begin{alg:problem}{$d^t$}{pb:dt}
\minimize{\mu\in\E}\Fen f{\mu}+\support{-K^t}\mu
\end{alg:problem}
Set $\underline h^t = \Fen f{\nu^{t+1}}+\support{-K^t}{\nu^{t+1}}$\;
}
\smallskip
\DualCandidate{
Generate a new candidate $\mu^{t+1}$.
}
\smallskip
}
\end{algorithm}
\end{minipage}
\end{figure}

\Cref{alg:dfwa} is equivalent to \Cref{alg:fwa}. First we note that the two \textbf{Oracle} steps are equivalent, as a direct consequence of \Cref{coro:subgrad_oracle}.
We next notice that \Cref{pb:dt} is the dual of \Cref{pb:pt}. Also, for the same reason as with \Cref{pb:FW_primal,pb:FW_dual}, \Cref{pb:pt,pb:dt} are in strong duality, which ensures that the definitions of $\underline h^t$ in the algorithms are equivalent. We note that, in the case of the fully-corrective FWA, defining $\mu^{t+1}$ as $\nabla f(\hat{x}^{t+1})$ is equivalent to directly define $\mu^{t+1}$ as a solution to \Cref{pb:dt}.

\Cref{alg:dfwa} can be seen as a general cutting-plane method for the dual problem \eqref{pb:FW_dual}. By construction, $-v^t \in \partial \sigma_{-K}(\mu^t)$, so the map $\mu \mapsto \InnerProd\mu{-v^t}$ is a linear lower bound of $\sigma_{-K}$, exact at $\mu= \mu^t$. This implies that $\sigma_{-K^t}$ is a lower approximation of $\sigma_{-K}$, which is exact at the points $\mu^0,\ldots,\mu^t$. If moreover $K^{-1}$ is the convex hull of a finite number of points, then $\sigma_{-K^t}$ is piecewise affine. We will call the map $\mu \mapsto \InnerProd\mu{-v^t}$ a cut, and by extension, we will simply call cut any element of $K^t$.

Let us comment on the role of the quantities $\bar h^t$ and $\underline h^t$. Denote by $V_d$ the value of \Cref{pb:FW_dual}. From the definition of $\bar{h}^t$, we directly see that it is an upper bound of $V_d$. Since $\sigma_{-\tilde{K}^t} \leq \sigma_{-K}$ and since $\underline h^t$ is the value of \Cref{pb:dt}, we deduce that $\underline{h}^t$ is a lower bound of $V_d$. This implies in particular that the candidate $\hat{\mu}^t$ obtained at iteration $t$ of the algorithm is $(\bar{h}^t- \underline{h}_t)$-optimal. We can retrieve this property by noticing that
\begin{equation*}
\bar{h}^t- \underline{h}^t
= \Delta(\hat{x}^t,\hat{\mu}^t).
\end{equation*}
The interpretation of the quantity $(\bar{h}^t- \underline{h}^t)$ as a primal-dual gap is of a key importance for the design of the desired extension of the FWA.

Let us recall our general objective: generalizing the FWA to the case of problems with nonsmooth costs $f$, utilizing the duality with cutting-plane-type algorithms. At a dual level, this means that we do not want to assume $f^*$ to be strongly convex, which does not seem restrictive at the first sight, since for basic cutting-plane methods, $f^*$ is simply the characteristic function of some given closed and bounded feasible set. By basic, we have in mind the methods for which one simply defines the next dual candidate $\mu^{t+1}$ as $\nu^{t+1}$. Though these methods are known to converge, the convergence is in practice slow (see \cite[Section~9.3.2]{bonnans2006numerical}); moreover, to ensure the convergence of the corresponding FWA (obtained by ``back''-dualization) to a minimizer, we need to ensure that the primal-dual gap $\bar{h}^t- \underline h^t$ converges to $0$. In view of our objectives, our attention has focused on the cutting-plane method calld Level-Set method proposed and analyzed in \cite[Section~2.2.1]{lemarechal1995new}, for which the convergence of the primal-dual gap is known. As we already pointed out in the introduction, we need to utilize a double extension of this method, since $f^*$ is restricted to be a characteristic function in \cite[Section~2.2.1]{lemarechal1995new} and since a certain projection step realized for the generation of a novel dual candidate (in the last step of the algorithm) must also be generalized. The next section is dedicated to the generalization of the Level-Set method.

\subsection{Statement of the ELS method}
\label{subsec:def_algo}

The aim of this section is to present an algorithm, which we call Extended Level-Set (ELS) method, to solve problems of the form
\begin{problem}{$D$}{pb:prim}
\minimize{\mu\in\E}\psi(\mu)+\support E\mu,
\end{problem}
where $\E$ is a Hilbert space, $\psi\in\plscc\E$, and $E\subset\E$ is a nonempty closed convex set. As its name suggests, the ELS method is an extension of the Level-Set method proposed in \cite[Section~2.2.1]{lemarechal1995new}, which is itself an extension of the Cutting-Plane Algorithm.

We denote by $V_D$ the value of \Cref{pb:prim}.
The problem dual to \Cref{pb:prim} is
\begin{problem}{$P$}{pb:dual}
\minimize{x\in-E} \Fen\psi x
\end{problem}
and we denote by $V_P$ its value. Given $x$ and $\mu$ in $\E$, according to \labelcref{eq:primal_dual_gap_gal}, the primal-dual gap between \Cref{pb:prim,pb:dual} is
\begin{equation*}
\Delta(x,\mu) = \psi(\mu)+\support E\mu+\Fen\psi x+\indic{-E}x.
\end{equation*}
A direct modification of the proof of \Cref{coro:fenchel_rockafellar} shows that $\Delta(x,\mu)=0$ if and only if $\mu$ is a solution to \Cref{pb:prim}, $x$ is a solution to \Cref{pb:dual}, and $V_P+V_D= 0$.

We fix a $\beta$-strongly convex function $\Xi\in\plscc\E$. Recall that $B_{\Xi}$ denotes the associated Bregman distance, in the sense of the definition \labelcref{eq:bregman_gal}.
The ELS method is described in \Cref{alg:ExtLNN}. Note that the \textbf{Primal update} involves a function called pruning, whose output is a subset of $E$. For the moment, we simply take $E^t=E^{t-1}\cup\left\{v^t\right\}$. We first investigate the convergence of the algorithm in this setting; we will later propose some pruning rules which preserve the convergence speed of our algorithm (see the last paragraph of \Cref{subsec:convergence}).

\begin{algorithm}
\label[algo]{alg:ExtLNN}
\caption{Extended Level-Set method for \Cref{pb:prim}}
\SetKwBlock{Oracle}{Oracle:}{}
\SetKwBlock{DualUpdate}{Dual update:}{}
\SetKwBlock{PrimalUpdate}{Primal update:}{}
\SetKwBlock{DualCandidate}{Dual candidate:}{}
\SetKwBlock{Pruning}{Pruning:}{}
\textbf{Require:} $\mu^0\in\dom{\support E{}}\cap\dom{\subgrad\Xi{}}\cap\dom{\psi}$, $w^0\in\subgrad\Xi{\mu^0}$, $E^{-1} \subset E$, $\lambda\in(0,1)$\;
Set $\bar h^{-1}=+\infty$\;
\For{$t=0,\ldots$}{
\emph{Available at iteration t:} $\mu^t\in\E$, $w^t\in\E$, $E^{t-1}\subset E$, $\bar h^{t-1}\in\bar\R^+$\;
\Oracle{
Find $v^t\in\subgrad{\support E{}}{\mu^t}$\;
}
\smallskip
\DualUpdate{
Set $\bar h^t=\min\left\{\bar h^{t-1},\psi\left(\mu^t\right)+\support {E}{\mu^t}\right\}$\;
}
\smallskip
\PrimalUpdate{
Set
$\underline h^t=\inf_{\mu\in\E}\psi(\mu)+\support {E^{t-1}\cup\left\{v^t\right\}}{\mu}
$\;
Set $E^t =\text{pruning}\left(E^{t-1}\cup\left\{v^t\right\}\right)\subset\cloconv{E^{t-1}\cup\left\{v^t\right\}}$\;
}
\smallskip
\DualCandidate{
Set $\Delta^t = \bar h^t-\underline h^t$\;
Set $\ell^t = \lambda \bar h^t+(1-\lambda)\underline h^t$\;
Set $Q^t = \{\psi+\support{E^t}{}\leq \ell^t\}$\;
Set a new candidate $\mu^{t+1}$ as the solution to:
\begin{alg:problem}{}{pb:projgen}
\minimize{\mu\in Q^t}\xiBreg\mu{\mu^t}{w^t}
\end{alg:problem}
Take $w^{t+1}\in\subgrad\Xi{\mu^{t+1}}$ such that $w^t-w^{t+1} \in N_{Q^t}(\mu^{t+1})$\;
}
\smallskip
}
\end{algorithm}

\begin{remark}
We want to highlight \Cref{alg:ExtLNN} as being a specific instance of \Cref{alg:dfwa}, where $\psi$ plays the role of $\Fen f{}$, $E$ and $E^t$ that of $-K$ and $-K^t$ respectively, and $v^t$ is replaced by its opposite vector.
\end{remark}

We now present the elements which support our claim that this algorithm is an extension of the Level-Set method.
\begin{itemize}
\item In our algorithm, we keep the function $\psi$ as is and take subgradients of $\support E{}$, whereas in \cite[Section~2.2.1]{lemarechal1995new}, subgradients of the whole cost function are taken.
\item The step described in \Cref{pb:projgen} is a projection step of $\mu^t$ onto the set $Q^t$, following the Bregman distance associated with $\Xi$. Note that, when $\Xi$ is differentiable, using a first-order optimality condition for $\mu^{t+1}$ in \eqref{pb:projgen}, we have that $w^{t} - \nabla \Xi(\mu^{t+1}) \in N_{Q^t}(\mu^{t+1})$, which ensures the existence of $w^{t+1}$ in the very last step.
\end{itemize}

We now present a list of hypotheses under which the algorithm is well-defined and converges.
Let us stress that some of these assumptions are not explicit, for example \Cref{assum:lpz-like} below. Though it would be easy to transform these assumptions into explicit ones by slightly weakening them, we recall that our main interest does not lie in the ELS method as such but rather in its dual counterpart, our DLS method, for which explicit assumptions will be made later on.

\begin{enumerate}[label = \upshape (H\arabic*), ref = \upshape H\arabic*]
\item\label[assum]{assum:mu_0} We have $\dom{\support E{}}\cap\dom{\subgrad\Xi{}}\cap\dom\psi\neq\varnothing$.
\item\label[assum]{assum:subgrad} For all $t\in\N$, $\subgrad{\support E{}}{\mu^t}\neq\varnothing$. There exists a constant $C_{\oracle}$ such that, for all $t\in\N$, the vector $v^t$ verifies $\norm{v^t}\leq C_{\oracle}$.
\item\label[assum]{assum:psi_growth} The set $E^{-1}$ required at the beginning of the algorithm is bounded and such that $\psi+\support {E^{-1}}{}$ is coercive.
\end{enumerate}

We fix a constant $C_{E^{-1}}$ such that $E^{-1}\subset B(0,C_{E^{-1}})$.
We denote as $Q^{-1}$ the set $\left\{\psi+\support{E^{-1}}{}\leq\bar h^0\right\}.$ This set is convex, nonempty as a consequence of \Cref{assum:mu_0}, and bounded as a consequence of \Cref{assum:psi_growth}. Let then $C_{Q^{-1}}>0$ be such that $Q^{-1}\subset B(0,C_{Q^{-1}})$.

\begin{enumerate}[resume*]
\item\label[assum]{assum:xi} We have $Q^{-1}\subset\dom\Xi$ and
the function $\Xi$ is bounded over the set $Q^{-1}$ by a constant $C_{\Xi,Q^{-1}}>0$.
\item\label[assum]{assum:w} For all $t\in\N$, there exists $w^{t+1}\in\subgrad\Xi{\mu^{t+1}}$ such that
$w^t-w^{t+1} \in N_{Q^t}(\mu^{t+1})$. There exists a constant $C_{\subgrad\Xi{}}>0$ such that for all $t\in\N$, $\norm{w^t}\leq C_{\partial\Xi}$.
\item\label[assum]{assum:lpz-like} There exists a constant $C_\psi > 0$ such that, for all $t\in\N$,
\begin{equation*}
\abs*{\psi\left(\mu^{t+1}\right)-\psi\left(\mu^t\right)}\leq C_\psi\norm{\mu^{t+1}-\mu^t}.
\end{equation*}
\end{enumerate}

In the rest of this section, we assume that all these assumptions are verified and we recall that, except for the last paragraph of this section, we assume that we take $E^t = E^{t-1} \cup \{v^t\}$ in the \textbf{Primal update} step of \Cref{alg:ExtLNN}. We next provide some first properties of \Cref{alg:ExtLNN}.

\begin{prop}[label=prop:well_posed_els]
The Extended Level-Set method from \Cref{alg:ExtLNN} is well-posed and, for any $t \in \mathbb{N}$, we have
\begin{equation} \label{eq:incr_decr}
-\infty<\underline h^t\leq\underline h^{t+1}\leq -V_P\leq V_D\leq \bar h^{t+1}\leq\bar h^t<+\infty.
\end{equation}
Moreover, for any $t \in \mathbb{N}$, there exist two elements $\hat{x}^t$ and $\hat{\mu}^t$ such that
\begin{equation}\label{eq:candidates}
\hat x^t\in \argmin{x\in\E}\Fen\psi x+\indic{-\cloconv{E^{t-1}\cup\left\{v^t\right\}}}x\quad \text{and}\quad \hat\mu^t\in\argmin{\mu\in\IntInt{\mu^0}{\mu^t}}\psi(\mu)+\support E\mu.
\end{equation}
We also have
\begin{equation}\label{eq:errors}
-\underline h^t=\Fen\psi{\hat x^t} + \iota_{-E}\left( \hat x^t \right),
\quad
\bar h^t=\psi\left(\hat\mu^t\right)+\support E{\hat\mu^t},
\quad \text{and} \quad
\Delta^t= \Delta\left(\hat x^t,\hat\mu^t\right).
\end{equation}
Finally, the sequence $(\Delta^t)_{t \in \mathbb{N}}$ is nonincreasing.
\end{prop}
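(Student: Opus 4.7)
My plan is to prove the proposition by induction on $t$, packaging together the well-posedness of each step of \Cref{alg:ExtLNN} with the existence of $\hat{\mu}^t, \hat{x}^t$ and the identities \eqref{eq:candidates}-\eqref{eq:errors}. The key structural fact I will lean on throughout is that, in the non-pruned case $E^t = E^{t-1} \cup \{v^t\}$, the family $(E^t)$ is increasing and every $v^t$ lies in $E$ (by \Cref{rem:subgrad_support} combined with \Cref{assum:subgrad}); hence $E^{-1} \subset E^t \subset E$ for every $t$, and pointwise $\support{E^{-1}}{} \leq \support{E^t}{} \leq \support{E}{}$. Combined with \Cref{assum:psi_growth}, this immediately yields coercivity of $\psi + \support{E^{t-1} \cup \{v^t\}}{}$, hence finiteness of the infimum $\underline{h}^t$ and attainment of its minimum, and simultaneously gives the monotonicity $\underline{h}^t \leq \underline{h}^{t+1}$. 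The opposite monotonicity $\bar{h}^{t+1} \leq \bar{h}^t$ is built into the \textbf{Dual update} step, and a trivial induction shows $\bar{h}^t = \min_{s \leq t} \psi(\mu^s) + \support{E}{\mu^s}$, so any minimizer over this finite set supplies a valid $\hat{\mu}^t$.

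To obtain the dual representation $-\underline{h}^t = \Fen{\psi}{\hat{x}^t} + \indic{-E}{\hat{x}^t}$, I apply \Cref{th:FenchelRockafellar} to the problem $\min_\mu \psi(\mu) + \support{E^{t-1}\cup\{v^t\}}{\mu}$: since $E^{t-1} \cup \{v^t\}$ is bounded, its support function has full domain and the qualification condition \eqref{eq:qualif_cond_gal} is trivially satisfied. The dual problem reads $\min_{x \in -\cloconv{E^{t-1}\cup\{v^t\}}} \Fen{\psi}{x}$; since the primal value $\underline{h}^t$ is finite by the coercivity argument, \Cref{th:FenchelRockafellar} gives both strong duality and existence of a dual solution $\hat{x}^t$. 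Because $\cloconv{E^{t-1}\cup\{v^t\}} \subset E$ by the structural observation above, one has $\indic{-E}{\hat{x}^t} = 0$, yielding the identity in \eqref{eq:errors}.

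The chain \eqref{eq:incr_decr} now falls out immediately: $\bar{h}^t \geq V_D$ by the feasibility of $\hat{\mu}^t$ in \Cref{pb:prim}, $-\underline{h}^t \geq V_P$ by the feasibility of $\hat{x}^t \in -E$ in \Cref{pb:dual}, and $-V_P \leq V_D$ is weak duality from \Cref{coro:fenchel_rockafellar}. The identity $\Delta^t = \Delta(\hat{x}^t, \hat{\mu}^t)$ is then simply the definition of the primal-dual gap reassembled from \eqref{eq:errors}, and the monotonicity $\Delta^{t+1} \leq \Delta^t$ follows from the opposing monotonicities of $\bar{h}^t$ and $\underline{h}^t$.

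The only substantive remaining check is that \eqref{pb:projgen} admits a unique solution at each step, so that the induction can propagate. Non-emptiness of $Q^t$ follows from $\ell^t \geq \underline{h}^t$ together with attainment of the infimum in $\underline{h}^t$; the objective $\mu \mapsto \xiBreg{\mu}{\mu^t}{w^t}$ equals $\Xi$ plus an affine function of $\mu$, so it is $\beta$-strongly convex, lower semicontinuous, and coercive via \Cref{rem:breg_sqdist}, while $Q^t$ is closed convex as a sublevel set of a lsc convex function. Existence and uniqueness of $\mu^{t+1}$ then follow from standard convex analysis, and existence of $w^{t+1}$ with the required normal-cone property is exactly what \Cref{assum:w} postulates. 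I expect the main obstacle to be pure bookkeeping: verifying at each inductive step that $\mu^{t+1} \in \dom{\psi} \cap \dom{\Xi}$ (the former because $\mu^{t+1} \in Q^t$, the latter because $w^{t+1} \in \subgrad{\Xi}{\mu^{t+1}}$), so that the hypotheses H1-H6 remain applicable at iteration $t+1$; the genuine mathematical content is entirely concentrated in the single application of \Cref{th:FenchelRockafellar} described above.
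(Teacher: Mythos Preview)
Your proposal is correct and follows essentially the same route as the paper's own proof: an induction establishing well-posedness step by step, with the key mathematical content being the single application of \Cref{th:FenchelRockafellar} (using boundedness of $E^{t-1}\cup\{v^t\}$ so that the qualification condition is trivial) to produce $\hat{x}^t$ and the identity $-\underline{h}^t = \Fen{\psi}{\hat{x}^t}$. The only place where the paper is a touch more explicit is in justifying that \Cref{pb:projgen} has finite value: rather than relying on coercivity alone, the paper records the inclusion $Q^t \subset Q^{-1} \subset \dom{\Xi}$ (from $\ell^t \leq \bar h^t \leq \bar h^0$ and $\sigma_{E^t} \geq \sigma_{E^{-1}}$, together with \Cref{assum:xi}), which is precisely the ``bookkeeping'' you anticipate at the end.
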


\begin{proof}
We do a proof by induction. We claim that, for any $t \in \mathbb{N}$, the algorithm can be run until the beginning of iteration $t$ and that the following is satisfied:
\begin{equation*}
\mu^t \in \dom{\psi},
\quad
w^t \in \partial \Xi(\mu^t),
\quad
E^{-1} \subset \cloconv{E^{t-1}},
\quad
\text{and} \quad
t\geq 1 \Rightarrow \bar{h}^{t-1} \leq \bar{h}^0.
\end{equation*}
We also claim that $E^{t-1}$ is a bounded subset of $E$.
For $t=0$, the claim follows from \Cref{assum:mu_0}. Let us assume that it is satisfied for some $t \in \mathbb{N}$ and let us consider the execution of the iteration $t$ of the method.

The \textbf{Oracle} step is well-defined, by \Cref{assum:subgrad}, which also implies that $\mu^t \in \dom{\sigma_E}$.
Concerning the \textbf{Dual update} step, since $\mu^t \in \dom{\sigma_E} \cap \dom{\psi}$, we have $\psi(\mu^t) + \sigma_E(\mu^t)< + \infty$ and thus $\bar{h}^t < \infty$.
If $t = 0$, then $\bar{h}^t= \bar{h}^0$. If $t \geq 1$, then $\bar{h}^t \leq \bar{h}^{t-1} \leq \bar{h}^0$.

Let us move to the \textbf{Primal update} step. Since $E^{-1} \subset \cloconv{E^{t-1} \cup \{ v^t \}}$, we have that $\psi + \sigma_{E^{-1}} \leq \psi + \sigma_{E^{t-1} \cup \{ v^t \}}$, which implies that $\psi + \sigma_{E^{t-1} \cup \{ v^t \}}$ is coercive, by \Cref{assum:psi_growth}. Since this function is also convex and lower semicontinuous, it has a minimizer $\nu^t$ and $\underline{h}^t$ is finite, by \cite[Proposition~11.14]{bauschke2011convex}. Since $E^{t-1} \cup \{ v^t \}$ is bounded, then so is $E^t$, and we have $\dom{\sigma_{E^{t-1} \cup \{ v^t \}}}= \mathcal{E}$. Applying \Cref{th:FenchelRockafellar}, we deduce that
\begin{equation*}
-\underline h^t = \inf_{x\in\E}\Fen\psi x+\indic{-\cloconv{E^{t-1}\cup\left\{v^t\right\}}}x
\end{equation*}
and that the above problem has a solution $\hat{x}^t$. This implies that $\underline h^t \leq - V_P \leq V_D$. By construction, $\hat{x}^t \in - \cloconv{E^{t-1}\cup\left\{v^t\right\}} \subset - E$, which implies that $-\underline{h}^t=\Fen\psi{\hat x^t}+\indic{-E}{\hat x^t}$. From the definition of $E^t$, we have that $E^{-1} \subset \cloconv{E^t}$.

Finally, we discuss the \textbf{Dual candidate} step. We have $\Delta^t \geq 0$, because $\bar{h}^t \geq V_D \geq -V_P$ and $\underline{h}^t \leq- V^P$.
By definition of $\ell^t$, we have $\underline h^t\leq\ell^t \leq \bar h^t$. Since moreover $E^t \subset \cloconv{E^{t-1} \cup \{ v^t \}}$, we have
\begin{equation*}
\psi(\nu^t) + \sigma_{E^t}(\nu^t) \leq \psi(\nu^t) + \sigma_{E^{t-1} \cup \{ v^t \}}(\nu^t) = \underline{h}^t \leq \ell^t,
\end{equation*}
which implies that $\nu^t \in Q^t$ and thus $Q^t$ is nonempty. Since $\bar{h}^t \leq \bar{h}^0$ and $\sigma_{E^t} \geq \sigma_{E^{-1}}$, we deduce that $Q^t \subset Q^{-1} \subset \dom{\Xi}=\dom{B_{\Xi}(\cdot,(\mu^t,w^t))}$, by \Cref{assum:xi}. This implies that \Cref{pb:projgen} has a solution $\mu^{t+1}$. Note that $\mu^{t+1} \in Q^t \subset \dom{\psi}$. Finally, the existence of $w^{t+1} \in \partial \Xi(\mu^{t+1}) \cap \big( w^t - N_{Q^t}(\mu^{t+1}) \big)$ is ensured by \Cref{assum:w}. Therefore, the claim is verified for $t+1$, which proves the well-posedness of the algorithm.

Concerning the proof of \labelcref{eq:incr_decr}, it remains to prove that $\bar{h}^t$ is nonincreasing and that $\underline{h}^t$ is nondecreasing. It is obvious that $\bar{h}^t$ is nonincreasing. The fact that $\underline{h}^t \leq \underline{h}^{t+1}$ is a consequence of the inclusion $E^{t-1} \cup \{v^t\} \subset \cloconv{E^{t} \cup \{ v^{t+1} \}}$, which ensures that $\sigma_{E^{t-1} \cup \{ v^t \}} \leq \sigma_{E^{t} \cup \{ v^{t+1} \} }$.

Next, concerning \labelcref{eq:candidates,eq:errors}, we have already proved the existence of $\hat{x}^t$ and we have already justified that $-\underline h^t=\Fen\psi{\hat x^t} + \iota_{-E}\left( \hat x^t \right)$. The existence of $\hat{\mu}^t$ and the fact that $\bar h^t=\psi\left(\hat\mu^t\right)+\support E{\hat\mu^t}$ is straightforward. The equality $\Delta^t= \Delta\left(\hat x^t,\hat\mu^t\right)
$ is then a simple consequence of the definition of the primal-dual gap. Finally, $(\Delta^t)_{t \in \mathbb{N}}$ in nonincreasing because $(\bar{h}^t)_{t \in \mathbb{N}}$ and $(\underline{h}^t)_{t \in \mathbb{N}}$ are respectively nonincreasing and nondecreasing. This concludes the proof of the proposition.
\end{proof}

\begin{remark}
\Cref{pb:projgen} has a unique solution since $\Xi$ is strongly convex and $Q^t$ is convex.
\end{remark}

\subsection{Convergence analysis}\label{subsec:convergence}

The aim of this subsection is to prove the convergence of \Cref{alg:ExtLNN} under the previous assumptions and provide its convergence speed. We start by noticing that, exactly as in the proof of the lemma inside \cite[Theorem~2.2.1]{lemarechal1995new}, we have the following property.

\begin{lemma}[label=lem:LNN_step3]
Let $t_1\leq t_2$ be such that
$\Delta^{t_2}\geq (1-\lambda)\Delta^{t_1}$.
Then $\underline h^{t_2}\leq \ell^{t_1}$.
\end{lemma}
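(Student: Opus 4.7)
The plan is to reduce the claim to a direct chain of inequalities, using only the definitions of $\ell^t$ and $\Delta^t$ together with the monotonicity of the sequences $(\bar h^t)_{t \in \N}$ and $(\underline h^t)_{t \in \N}$ already established in \Cref{prop:well_posed_els}.

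First, I would rewrite $\ell^{t_1}$ in a form that makes $\Delta^{t_1}$ explicit. From the definition $\ell^{t_1}=\lambda \bar h^{t_1}+(1-\lambda)\underline h^{t_1}$, subtracting and adding $(1-\lambda)\bar h^{t_1}$ yields
\begin{equation*}
\ell^{t_1} = \bar h^{t_1} - (1-\lambda)\bigl(\bar h^{t_1}-\underline h^{t_1}\bigr) = \bar h^{t_1} - (1-\lambda)\Delta^{t_1}.
\end{equation*}
Symmetrically, $\underline h^{t_2}=\bar h^{t_2}-\Delta^{t_2}$.

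Next, I would combine three ingredients: (i) the hypothesis $\Delta^{t_2}\geq(1-\lambda)\Delta^{t_1}$, which gives $-\Delta^{t_2}\leq -(1-\lambda)\Delta^{t_1}$; (ii) the monotonicity $\bar h^{t_2}\leq \bar h^{t_1}$ guaranteed by \labelcref{eq:incr_decr} in \Cref{prop:well_posed_els} since $t_1\leq t_2$. Adding the two inequalities yields
\begin{equation*}
\underline h^{t_2} = \bar h^{t_2} - \Delta^{t_2} \leq \bar h^{t_1} - (1-\lambda)\Delta^{t_1} = \ell^{t_1},
\end{equation*}
which is exactly the desired conclusion.

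There is essentially no obstacle: the statement is an algebraic identity masked as an inequality, and the only substantive input is the nonincreasing character of $\bar h^t$, which has already been proven. The role of this lemma in the sequel is what matters, as it will presumably be used to control how often the primal-dual gap can fail to decrease by a factor $(1-\lambda)$ before an update forces $\underline h^t$ to cross the level $\ell^{t_1}$.
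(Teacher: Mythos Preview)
Your proof is correct and is precisely the standard argument: the paper does not give its own proof but refers to \cite[Theorem~2.2.1]{lemarechal1995new}, where exactly this algebraic manipulation---rewriting $\ell^{t_1}=\bar h^{t_1}-(1-\lambda)\Delta^{t_1}$ and $\underline h^{t_2}=\bar h^{t_2}-\Delta^{t_2}$, then combining the hypothesis with the monotonicity of $(\bar h^t)$---is carried out. One cosmetic slip: you announce ``three ingredients'' but only list (i) and (ii).
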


The next theorem deals with the convergence of \Cref{alg:ExtLNN}. Its proof follows closely the analysis in \cite[Theorem~2.2.1]{lemarechal1995new}, but additional care is needed in order to take into account our generalizations, in particular the use of the Bregman distance $B_{\Xi}$ for the projection step.
We hence provide a detalied proof below, which makes use of the constants $C_{\oracle}$, $C_{E^{-1}}$, $C_{Q^{-1}}$, $C_{\Xi,Q^{-1}}$, $C_{\partial\Xi}$, and $C_\psi$, introduced with \Crefrange{assum:subgrad}{assum:lpz-like}.

\begin{theorem}[label=th:cv_LM]
Consider the Extended Level-Set method from \Cref{alg:ExtLNN} under \Crefrange{assum:mu_0}{assum:lpz-like} and with $E^t = E^{t-1} \cup \{v^t\}$ in the \textbf{\upshape{Primal update}} step.
The primal-dual gap $\Delta^t$ converges to $0$ with a speed of order $1/\sqrt t$, i.e., there exists $C>0$ such that, for all $t\in\N^*$,
\begin{equation*}
\Delta^t\leq\frac C{\sqrt t}.
\end{equation*}
\end{theorem}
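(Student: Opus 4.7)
The plan is to adapt the convergence analysis of the original Level-Set method from \cite{lemarechal1995new}, keeping track of the two generalizations at play: only the support-function part of the cost is approximated by cuts (the function $\psi$ is kept intact in every $Q^t$), and the projection is performed with respect to the Bregman distance $B_\Xi$ rather than the Euclidean one. I first derive a per-iteration lower bound on $\norm{\mu^{t+1}-\mu^t}$. Because $v^t\in\subgrad{\support{E}{}}{\mu^t}$, \Cref{rem:subgrad_support} forces $\sigma_{E^t}(\mu^t)=\InnerProd{\mu^t}{v^t}=\sigma_E(\mu^t)$, hence $\psi(\mu^t)+\sigma_{E^t}(\mu^t)\geq\bar h^t$. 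Since $\mu^{t+1}\in Q^t$ yields $\psi(\mu^{t+1})+\sigma_{E^t}(\mu^{t+1})\leq\ell^t$, subtracting and using $\bar h^t-\ell^t=(1-\lambda)\Delta^t$ together with \Cref{assum:lpz-like} and the boundedness of $E^t$ (so that $\sigma_{E^t}$ is Lipschitz with constant at most $\max(C_{E^{-1}},C_{\oracle})$) produces
\begin{equation*}
\norm{\mu^{t+1}-\mu^t}\geq \frac{(1-\lambda)\Delta^t}{L},\qquad L := C_\psi+\max(C_{E^{-1}},C_{\oracle}).
\end{equation*}

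Following \cite{lemarechal1995new}, I would then organize iterations into cycles by setting $t_0=0$ and taking $t_{k+1}$ to be the first $t>t_k$ with $\Delta^t<(1-\lambda)\Delta^{t_k}$. Within a cycle $[t_k,t_{k+1})$ we have $\Delta^t\geq(1-\lambda)\Delta^{t_k}$, upgrading the previous bound to $\norm{\mu^{t+1}-\mu^t}\geq(1-\lambda)^2\Delta^{t_k}/L$; in parallel, \Cref{lem:LNN_step3} gives $\underline h^t\leq\ell^{t_k}$, which in turn forces $\bar h^t$, $\underline h^t$, and $\ell^t$ to remain within $\lambda\Delta^{t_k}$ of their respective values at $t_k$. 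Next, the last line of \Cref{alg:ExtLNN} picks $w^{t+1}$ such that $w^t-w^{t+1}\in N_{Q^t}(\mu^{t+1})$, and combining this with the three-point identity of \Cref{lem:identity_Breg} yields the Bregman--Pythagorean inequality
\begin{equation*}
\xiBreg{\mu}{\mu^{t+1}}{w^{t+1}}+\xiBreg{\mu^{t+1}}{\mu^t}{w^t}\leq\xiBreg{\mu}{\mu^t}{w^t}\qquad\text{for every }\mu\in Q^t.
\end{equation*}
Picking a comparison point $\mu^\star$ common to $Q^t$ for all $t\in[t_k,t_{k+1})$, summing this inequality over the cycle telescopes the right-hand side into $\xiBreg{\mu^\star}{\mu^{t_k}}{w^{t_k}}$. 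The strong convexity of $\Xi$ (\Cref{rem:breg_sqdist}) lower-bounds each summand on the left by $\frac{\beta}{2}\sqnorm{\mu^{t+1}-\mu^t}$, and a uniform bound on the right-hand side---coming from $\mu^\star,\mu^{t_k}\in Q^{-1}$ together with \Cref{assum:xi,assum:w}---then yields the cycle-length estimate $N_k := t_{k+1}-t_k \leq C_1/(\Delta^{t_k})^2$.

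Assembling the pieces, for $t$ lying in cycle $K$ one has $t \leq \sum_{k=0}^{K} N_k \leq C_1 \sum_{k=0}^{K} (\Delta^{t_k})^{-2}$, and the defining inequality $\Delta^{t_{k+1}}<(1-\lambda)\Delta^{t_k}$ makes the right-hand side a geometric series dominated by its last term, giving $t\leq C_2 (\Delta^{t_K})^{-2}$. Since $(\Delta^t)_t$ is nonincreasing by \Cref{prop:well_posed_els}, I conclude $\Delta^t \leq \Delta^{t_K} \leq C/\sqrt{t}$, which is the claimed rate.

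The main obstacle will be the construction of the comparison point $\mu^\star$ used in the telescoping step, since the level sets $Q^t$ do evolve along a cycle: $E^t$ grows (pushing $\sigma_{E^t}$ up) whereas $\ell^t$ is only stable up to $\lambda\Delta^{t_k}$, so the naive candidates---a minimizer of $\psi+\sigma_E$, or the current bundle minimizer of $\psi+\sigma_{E^t}$---are not automatically admissible throughout the cycle. Resolving this will require a careful exploitation of the above cycle-estimates on $\bar h^t$, $\underline h^t$, and $\ell^t$, inherited from \cite{lemarechal1995new}, while additionally managing the interaction with the Bregman geometry through \Cref{assum:xi,assum:w}.
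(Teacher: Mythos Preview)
Your outline is exactly the paper's strategy, and every ingredient you list (the per-iteration displacement bound via \Cref{assum:lpz-like} and the boundedness of $E^t$, the Bregman three-point inequality combined with the normal-cone condition, the cycle decomposition, the geometric summation) appears in the paper's proof with the same constants. The only genuine gap is the one you flag yourself: the construction of $\mu^\star$.

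This obstacle is in fact not hard, and the paper resolves it with a choice you did not list among your ``naive candidates'': take $\mu^\star=\chi^p$ to be a minimizer of $\psi+\sigma_{E^{i_p}}$, where $i_p$ is the \emph{last} index of the cycle (in your notation, $i_p=t_{k+1}-1$). The point is that $E^t$ is increasing in $t$, so for every $t$ in the cycle $\sigma_{E^t}\leq\sigma_{E^{i_p}}$, and hence $\psi(\chi^p)+\sigma_{E^t}(\chi^p)\leq\psi(\chi^p)+\sigma_{E^{i_p}}(\chi^p)=\underline h^{i_p}$. To conclude $\chi^p\in Q^t$ one needs $\underline h^{i_p}\leq\ell^t$, and this is precisely \Cref{lem:LNN_step3} applied with $t_1=t$ and $t_2=i_p$ (note $\Delta^{i_p}\geq(1-\lambda)\Delta^{i_{p-1}+1}\geq(1-\lambda)\Delta^t$ by monotonicity of $\Delta$). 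You invoked the lemma in the opposite direction, obtaining $\underline h^t\leq\ell^{t_k}$, which is true but does not help; the useful inequality compares the \emph{final} lower bound of the cycle to the \emph{running} level. Once this is in place, the telescoping and the bound on $\xiBreg{\chi^p}{\mu^{i_{p-1}+1}}{w^{i_{p-1}+1}}$ via \Cref{assum:xi,assum:w} go through exactly as you describe.
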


\begin{proof}
\step{1} Let $T\in\N$ and set $\varepsilon=\Delta^T$ and $I=\IntInt0T$. We recall that, using the monotonicity of $\left(\Delta^t\right)_{t\in\N}$, we have $\varepsilon=\displaystyle\inf_{t\in I} \Delta^t$. We split $I$ in a partition $I_1,\dotsc,I_m$ as follows:
\begin{itemize}
\item We set $p=0$ and $i_0=-1$.
\item While $i_p<T$, we set
\begin{equation*}
i_{p+1}=\max\left\{t\in\IntInt0T\,\big|\,\Delta^t\geq(1-\lambda)\Delta^{i_p+1}\right\}\quad \text{and}\quad I_{p+1}=\IntInt{i_p+1}{i_{p+1}}
\end{equation*}
and we increment $p$ by $1$.
\end{itemize}
Following \cite{lemarechal1995new}, for all $p\in\IntInt0{m-1}$, the iteration $i_p+1$ is called \textit{critical}.
Notice that, using the monotonicity of the sequence $\left(\Delta^t\right)_{t\in\N}$, we have, for all $p\in\IntInt1m$ and $t\in I_p$,
\begin{equation}
\Delta^t\geq(1-\lambda)\Delta^{i_{p-1}+1}.
\end{equation}

Now, let $p\in\IntInt1m$ and $\chi^p$ be a minimizer of $\psi +\support{E^{i_p}}{}$. Notice that such a minimizer exists, since $\psi+\support{E^{i_p}}{}\in\plscc{\E}$ and is coercive using \Cref{assum:psi_growth}.
Then, \Cref{lem:LNN_step3} applied with $t_1=t\in I_p$ and $t_2=i_p$ shows that 
\begin{equation*}
\psi\left(\chi^p\right)+\support{E^{i_p}}{\chi^p}=\underline h^{i_p}\leq\ell^t.
\end{equation*}
Since for all $t\in I_p$, we have $\support{E^t}{}\leq\support{E^{i_p}}{}$, this yields
$\chi^p\in\bigcap_{t\in I_p}Q^t$.
This construction holds for any $p\in\IntInt1m$.

\step{2} Let $p\in\IntInt1m.$ For all $t\in\N$, we set $\tau_p^t=\xiBreg{\chi^p}{\mu^t}{w^t}\geq0.$
Now, let $t\in I_p$.
We use the identity from \Cref{lem:identity_Breg}
with $a=\mu^t$, $b=\mu^{t+1}$, $c=\chi^p$, $w_a=w^t$, and $w_b=w^{t+1}$, which yields
\begin{equation*}
\tau_p^{t+1}+\xiBreg{\mu^{t+1}}{\mu^t}{w^t}-\tau_p^t=\InnerProd{w^t-w^{t+1}}{\chi^p-\mu^{t+1}}.
\end{equation*}
Reordering and using \Cref{assum:w} yields
\begin{equation}\label{eq:reltau1}
0\leq\tau_p^{t+1}\leq\tau_p^t-\xiBreg{\mu^{t+1}}{\mu^t}{w^t}.
\end{equation}
In turn, using \Cref{rem:breg_sqdist}, we have
\begin{equation}\label{eq:reltau2}
0\leq\tau_p^{t+1} \leq \tau_p^t-\frac\beta2\sqnorm{\mu^t-\mu^{t+1}} \leq \tau_p^t-\frac\beta{2C^2}\sqabs*{\psi\left(\mu^{t+1}\right)+\support{E^t}{\mu^{t+1}}-\psi\left(\mu^t\right)-\support{E^t}{\mu^t}},
\end{equation}
where $C=C_\psi+\max(C_{E^{-1}},C_{\oracle})$. Indeed,
\begin{align*}
\abs*{\psi\left(\mu^{t+1}\right)+\support{E^t}{\mu^{t+1}}-\psi\left(\mu^t\right)-\support{E^t}{\mu^t}}&\leq\abs*{\psi\left(\mu^{t+1}\right)-\psi\left(\mu^t\right)}+\abs*{\support{E^t}{\mu^{t+1}}-\support{E^t}{\mu^t}}\\
&\leq C\norm{\mu^{t+1}-\mu^t},
\end{align*}
using \Cref{assum:lpz-like} and the fact that $E^t\subset B(0,\max(C_{E^{-1}},C_{\oracle}))$, which is itself a consequence of the definition of $E^t$ and of \Cref{assum:subgrad,assum:psi_growth}.
Moreover, we know that
\begin{equation}\label{eq:reltau3}
\abs*{\psi\left(\mu^{t+1}\right)+\support{E^t}{\mu^{t+1}}-\psi\left(\mu^t\right)-\support{E^t}{\mu^t}}\geq (1-\lambda)\Delta^t,
\end{equation}
since
\begin{align*}
\psi\left(\mu^t\right)+\support{E^t}{\mu^t}-\psi\left(\mu^{t+1}\right)-\support{E^t}{\mu^{t+1}} & \geq\psi\left(\mu^t\right)+\support{E^t}{\mu^t}-\ell^t\\
&\geq \bar h^t-\ell^t\\
& =(1-\lambda)\Delta^t,
\end{align*}
where we use, in order, the fact that $\psi\left(\mu^{t+1}\right)+\support{E^t}{\mu^{t+1}}\leq\ell^t$ and the definitions of $\bar h^t$ and of $\Delta^t$.
Combining \cref{eq:reltau2,eq:reltau3} yields
\begin{equation*}
0\leq\tau_p^{t+1}\leq\tau_p^t-\frac\beta{2C^2}\left((1-\lambda)\Delta^t\right)^2,
\end{equation*}
which implies, using the nonnegativeness and nonincreasingness of the sequence $\left(\Delta^t\right)_{t\in\N}$,
\begin{equation}\label{eq:rel_tau}
0\leq\tau_p^{t+1}\leq \tau_p^t-\frac\beta{2C^2}\left((1-\lambda)\Delta^{i_p}\right)^2.
\end{equation}
Taking $t=i_p$ and iterating $i_p-i_{p-1}-1$ times \cref{eq:rel_tau} yields
\begin{equation}\label{eq:bound_lastfirst}
0\leq\tau_p^{i_p+1}\leq \tau_p^{i_{p-1}+1}-(i_p-i_{p-1})\frac\beta{2C^2}\left((1-\lambda)\Delta^{i_p}\right)^2.
\end{equation}
Notice also that 
\begin{equation}\label{eq:bound_tau}
\tau_p^{i_{p-1}+1}\leq 2\left(C_{\Xi,Q^{-1}}+C_{\subgrad\Xi{}}C_{Q^{-1}}\right),
\end{equation}
since, for all $t\in\N$,
\begin{equation*}
\tau_p^t\leq\abs*{\Xi\left(\chi^p\right)-\Xi\left(\mu^t\right)}+\abs*{\InnerProd{w^t}{\chi^p-\mu^t}}.
\end{equation*}
Combining \cref{eq:bound_lastfirst,eq:bound_tau} yields
\begin{equation*}
0\leq2\left(C_{\Xi,Q^{-1}}+C_{\subgrad\Xi{}}C_{Q^{-1}}\right)-(i_p-i_{p-1})\frac\beta{2C^2}\left((1-\lambda)\Delta^{i_p}\right)^2,
\end{equation*}
and thus
\begin{equation}\label{eq:bound_sizeIp}
\abs*{I_p}=i_p-i_{p-1}\leq\frac{4 C^2\left(C_{\Xi,Q^{-1}}+C_{\subgrad{\Xi}{}}C_{Q^{-1}}\right)}{\beta\left((1-\lambda)\Delta^{i_p}\right)^2}=\frac {\bar C}{\left((1-\lambda)\Delta^{i_p}\right)^2}.
\end{equation}

\step{3} Since we have, by definition of the indices $i_p$ and nonincreasingness of $\left(\Delta^t\right)_{t \in \N}$,
\begin{equation*}
\Delta^{i_{m-1}+1}\geq\Delta^{i_m}=\Delta^T=\varepsilon\quad\text{and}\quad \Delta^{i_{p+1}}\geq (1-\lambda)\Delta^{i_p+1}>\Delta^{i_{p+1}+1} \text{ for all }p\in\IntInt1{m-2},
\end{equation*}
then for all $p\in\IntInt1{m-1}$, we have
\begin{equation}
\Delta^{i_p}\geq\frac{\varepsilon}{(1-\lambda)^{m-1-p}}.
\end{equation}
Summing \cref{eq:bound_sizeIp} then yields
\begin{align}
\notag T+1&=\sum_{p=1}^m\abs*{I_p}
\leq \frac{\bar C}{(1-\lambda)^2}\sum_{p=1}^m\left(\frac1{\Delta^{i_p}}\right)^2
\leq \frac{\bar C}{(1-\lambda)^2}\left(\frac1{\varepsilon^2}+\sum_{p=1}^{m-1}\frac{(1-\lambda)^{2(m-1-p)}}{\varepsilon^2}\right)\\
&\leq\frac{\bar C}{(1-\lambda)^2\varepsilon^2}\left(1+\sum_{p\in\N}(1-\lambda)^{2p}\right)
\label{eq:lambda}=\frac{\bar C}{(1-\lambda)^2\varepsilon^2}\left(1+\frac1{\lambda(2-\lambda)}\right),
\end{align}
which shows the expected result.
\end{proof}

\begin{remark}
We can minimize \cref{eq:lambda} with respect to $\lambda$, which gives $\bar\lambda=1-\sqrt{2-\sqrt2}\approx0.23$.
\end{remark}

\paragraph{The pruning step}

As is, the main issue with the Extended Level-Set method from \Cref{alg:ExtLNN} is that we need to keep track of all the subgradients $v^t$. This may become costly in terms of memory and of computation time of $\underline h^t$ and of $\mu^{t+1}$, as the set $E^t$ appears in the definition of $Q^t$. This is why we propose to apply pruning steps in our algorithm. By pruning, we mean that we want to define $E^t$ as a smaller set than $E^{t-1} \cup \{ v^t \}$, in the sense that $E^t \subset \cloconv{E^{t-1} \cup \{ v^t \}}$. Note that imposing $E^t \subset \cloconv{E^{t-1} \cup \{ v^t \}}$ at each iteration implies that $E^t \subset \cloconv{E^{-1} \cup\IntInt{v^0}{v^t}}$ and gives thus the boundedness of $E^t$.

Pruning offers the possibility to chose a set $E^t$ with a small cardinality, so as to simplify the implementation of the \textbf{Primal update} and \textbf{Dual candidate} steps. We establish in this paragraph some sufficient properties on the choice of $E^t$ which ensure that \Cref{alg:ExtLNN} remains well-posed and that its convergence properties are preserved. The proof of \Cref{prop:well_posed_els} reveals that the algorithm indeed remains well-posed if we require that $E^{-1}\subset \cloconv{E^t}$.
The convergence proof of \Cref{th:cv_LM} still holds if the following holds.
\begin{enumerate}[label = \upshape(\itshape\roman*\,\upshape), ref = \itshape\roman*]
\item\label{prun:monot} The sequences $\left(\bar h^t\right)_{t\in\N}$ and $\left(\underline h^t\right)_{t\in\N}$ keep the same monotonicity as shown in \labelcref{eq:incr_decr}.
\item\label{prun:coer} The function $\psi+\support{E^t}{}$ remains coercive.
\item\label{prun:common} We are able to find $\chi^p\in\displaystyle\bigcap_{t\in I_p}Q^t$, where $I_p$ describes a subinterval defined in the proof of \Cref{th:cv_LM}.
\end{enumerate}

For property (\labelcref{prun:common}\,) to hold, we decide to only make pruning steps at the critical iterations. Notice that detecting a critical iteration is easy, since it only requires to keep track of $\Delta^j$, where $j$ denotes the last critical iteration. For property (\labelcref{prun:coer}\,) to hold, we only need to keep $E^{-1}\subset \cloconv{E^t}$ after the pruning step, as we already required for the well-posedness of the algorithm. Lastly, for property (\labelcref{prun:monot}\,), notice that the monotonicity of the sequence $\left(\bar h^t\right)_{t\in\N}$ is preserved, and for the monotonicity of $\left(\underline h^t\right)_{t\in\N}$, it suffices that $\underline h^t$ be preserved by pruning steps, i.e., that, for all $t\in\N$,
\begin{equation}\label{eq:monot}
\underline h^t = \tilde{h}^t := \inf_{\mu\in\E}\psi(\mu)+\support{E^t}\mu.
\end{equation}
By definition, this equality holds at noncritical iterations. Let then $t$ be a critical iteration. Since $E^t\subset \cloconv{E^{t-1}\cup\{v^t\}}$, we have $\bar{h}^t \geq \tilde{h}^t$.
Since $E^t$ is bounded, we have
\begin{equation}
\tilde{h}^t = -\min_{x\in\E}\Fen\psi x+\indic{-\cloconv{E^t}}{x}.
\end{equation}
We recall that we proved in \Cref{prop:well_posed_els} that $\underline{h}_t= - \psi^*(\hat{x}^t)$ and that $\hat{x}^t \in \cloconv{E^{t-1} \cup \{ v^t \}}$. Therefore, to ensure that $\tilde{h}^t \geq \bar{h}^t$ (and thus for property (\labelcref{prun:monot}\,) to hold) it suffices to require that $\hat{x}^t \in \cloconv{E^t}$.

We summarize the previous discussion by presenting, in \Cref{alg:ExtLNN_prun}, the extension of \Cref{alg:ExtLNN} with a pruning step satisfying the above requirements, and we deduce at once the following convergence result.

\begin{algorithm}
\SetKwBlock{Oracle}{Oracle:}{}
\SetKwBlock{DualUpdate}{Dual update:}{}
\SetKwBlock{PrimalUpdate}{Primal update:}{}
\SetKwBlock{DualCandidate}{Dual candidate:}{}
\SetKwBlock{Pruning}{Pruning:}{}
\textbf{Require:} $\mu^0\in\dom{\support E{}}\cap\dom{\subgrad\Xi{}}\cap\dom{\psi}$, $w^0\in\subgrad\Xi{\mu^0}$, $E^{-1} \subset E$, $\lambda\in(0,1)$\;
Set $\bar h^{-1}=+\infty$ and $\bar\Delta = +\infty$\;
\For{$t=0,\ldots$}{
\emph{Available at iteration $t$:} $\mu^t$, $w^t$, $E^{t-1}$, $\bar h^{t-1}$ as in \Cref{alg:ExtLNN}, and $\bar\Delta\in\R_+\cup\{+\infty\}$\;
\Oracle{
Find $v^t\in\subgrad{\support E{}}{\mu^t}$\;
Set $\tilde{E^t}=\cloconv{E^{t-1}\cup{v^t}}$\;
}
\smallskip
\DualUpdate{
Set $\bar h^t$, $\underline h^t$, $\Delta^t$, $\ell^t$ as in \Cref{alg:ExtLNN}, and take $\hat x^t$ as a solution to
\begin{alg:equation*}
\minimize{x\in\E}\Fen\psi x+\indic{-\tilde{E^t}}x\;
\end{alg:equation*}
}
\smallskip
\Pruning{
\eIf{$\Delta^t<(1-\lambda)\bar\Delta$}{Take
$E^t\subset \cloconv{E^{t-1}\cup\left\{v^t\right\}}$ such that
$\left\{-\hat x^t\right\}\cup E^{-1}\subset \cloconv{E^t}$\;
Set $\bar\Delta = \Delta^t$\;
}
{
Set $E^t = E^{t-1}\cup\left\{v^t\right\}$\;
}
}
\smallskip
\DualCandidate
{
Take $Q^t$, $\mu^{t+1}$, and $w^{t+1}$ as in \Cref{alg:ExtLNN}\;
}
\smallskip
}
\caption{Extended Level-Set method for \Cref{pb:prim} with pruning}
\label[algo]{alg:ExtLNN_prun}
\end{algorithm}

\begin{theorem}[label=th:cv_LMP]
Consider the Extended Level-Set method with pruning from \Cref{alg:ExtLNN_prun} under \Crefrange{assum:mu_0}{assum:lpz-like}. The primal-dual gap $\Delta^t$ converges to $0$ with a speed of order $1/\sqrt t$.
\end{theorem}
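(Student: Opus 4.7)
The plan is to verify that \Cref{alg:ExtLNN_prun} satisfies the three requirements~(\labelcref{prun:monot}\,), (\labelcref{prun:coer}\,), and~(\labelcref{prun:common}\,) spelled out just before its statement; once these are in place, the convergence analysis of \Cref{th:cv_LM} carries over essentially verbatim, yielding the same $1/\sqrt t$ rate.

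First I would handle well-posedness together with property~(\labelcref{prun:coer}\,). Because the pruning rule always enforces $E^{-1}\subset\cloconv{E^t}$, one has $\support{E^t}{}\geq\support{E^{-1}}{}$, so \Cref{assum:psi_growth} ensures that $\psi+\support{E^t}{}$ remains coercive, and the well-posedness argument of \Cref{prop:well_posed_els} applies without change.

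Next, I would establish the monotonicity condition~(\labelcref{prun:monot}\,). Nonincreasingness of $(\bar h^t)$ is immediate from its definition. For $\underline h^t$, the only delicate situation is a critical iteration at which the pruning effectively shrinks the cut set. There I would use \Cref{th:FenchelRockafellar} to rewrite
\begin{equation*}
\inf_{\mu\in\E}\psi(\mu)+\support{E^t}\mu=-\min_{x\in\E}\Fen\psi x+\indic{-\cloconv{E^t}}x,
\end{equation*}
and then exploit the pruning constraint $-\hat x^t\in\cloconv{E^t}$: since $\hat x^t$ is a minimizer of $\Fen\psi{}+\indic{-\cloconv{E^{t-1}\cup\{v^t\}}}{}$ over the larger feasible set $-\cloconv{E^{t-1}\cup\{v^t\}}$ and remains feasible after pruning, it is also a minimizer over the smaller set $-\cloconv{E^t}$. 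Hence the value $\underline h^t$ is unaffected by pruning and the monotonicity is preserved.

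The main conceptual point, and the step I expect to be the most subtle, is property~(\labelcref{prun:common}\,). The key observation is that the algorithmic trigger $\Delta^t<(1-\lambda)\bar\Delta$ exactly identifies the critical iterations $i_{p-1}+1$ of Step~1 in the proof of \Cref{th:cv_LM}: after the previous critical iteration, $\bar\Delta$ stores $\Delta^{i_{p-1}+1}$, so the next firing of the test coincides with the smallest $t$ outside the block~$I_p$. Consequently, no pruning takes place within a block, which makes the map $t\mapsto\support{E^t}{}$ nondecreasing on $I_p$ and thus dominated by $\support{E^{i_p}}{}$. Choosing $\chi^p$ as a minimizer of $\psi+\support{E^{i_p}}{}$, which exists by the preserved coercivity and lower semicontinuity, and applying \Cref{lem:LNN_step3} with $t_1=t\in I_p$ and $t_2=i_p$, one obtains
\begin{equation*}
\psi(\chi^p)+\support{E^t}{\chi^p}\leq\psi(\chi^p)+\support{E^{i_p}}{\chi^p}=\underline h^{i_p}\leq\ell^t,
\end{equation*}
that is $\chi^p\in\bigcap_{t\in I_p}Q^t$. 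With this common element in hand, Steps~2 and~3 of the proof of \Cref{th:cv_LM} apply without modification, giving the claimed $O(1/\sqrt t)$ bound.
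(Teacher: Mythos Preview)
Your proposal is correct and follows essentially the same route as the paper: the paper does not give a separate proof for this theorem but deduces it ``at once'' from the discussion immediately preceding \Cref{alg:ExtLNN_prun}, which establishes exactly the three properties (\labelcref{prun:monot}\,), (\labelcref{prun:coer}\,), (\labelcref{prun:common}\,) you verify, using the same ingredients (the constraint $E^{-1}\subset\cloconv{E^t}$ for coercivity, the inclusion $-\hat x^t\in\cloconv{E^t}$ together with Fenchel--Rockafellar duality for the invariance of $\underline h^t$ under pruning, and the identification of the algorithmic trigger with the critical iterations so that cuts only accumulate within each block $I_p$). One wording nitpick: pruning \emph{does} occur at the first iteration $i_{p-1}+1$ of each block, but since you only need $E^{i_{p-1}+1}\subset E^{i_{p-1}+2}\subset\cdots\subset E^{i_p}$, which holds because no pruning happens at the \emph{subsequent} iterations of the block, your conclusion that $\support{E^t}{}\leq\support{E^{i_p}}{}$ on $I_p$ is unaffected.
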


\section{The Dualized Level-Set method}\label{sec:DLS}

We present in this section the general nonsmooth problem that we aim at solving with our DLS algorithm, \Cref{pb:dep_DLS}. Our approach consists in applying the ELS method from \Cref{alg:ExtLNN_prun} to its dual, \Cref{pb:now_prim}. 

\subsection{Framework and mathematical assumptions}
\label{subsec:framework_DLS}

Let $\E_1$ and $\E_2$ be Hilbert spaces and $\E$ their product space. We aim at solving problems of the form
\begin{problem}{\textbf{P}}{pb:dep_DLS}
\minimize{x_1\in\E_1}f(x_1)+\support Q{Ax_1-b}+\indic K{x_1},
\end{problem}
where $A \colon \E_1 \to \E_2$ is a linear operator, $b \in \E_2$, $Q \subset \E_2$, and $K \subset \E_1$. We denote by $V$ the value of \Cref{pb:dep_DLS}.

\paragraph{Structural assumptions}

We make the following assumptions.
\begin{enumerate}[label = \upshape(A\arabic*), ref = \upshape A\arabic*]
\item\label[assum]{struct:lpz} The function $f\colon\E_1\to\R$ is convex and has $\beta$-Lipschitz gradient, for some $\beta>0$.
\item\label[assum]{struct:shape_Q} The set $Q\subset \E_2$ is nonempty and can be decomposed in $Q=Q_1+Q_2$, where $Q_1$ is a closed convex bounded set and $Q_2$ is a closed convex cone. Let then $C_{Q_1}$ be such that $Q_1\subset B(0,C_{Q_1})$.
\item\label[assum]{struct:shape_K} The set $K \subset \E_1$ is nonempty, closed, and convex.
\item\label[assum]{struct:shape_A} The operator $A\colon\E_1\to\E_2$ is linear and bounded.
\item\label[assum]{struct:mu_0} There exists $\mu^0 \in \dom{\subgrad{\Fen{f}{}}{}} \times Q$ such that $\mu^0_1 + A^\ast \mu^0_2 \in \dom{\support{-K}{}}$.
\end{enumerate}
Notice that, in this context, we have
\begin{equation*}
\dom{\support Q{\cdot - b}}=b+Q_2^\ominus,
\quad
\text{where }
Q_2^\ominus=\left\{x_2\in\E_2\,\big|\,\forall \mu_2\in Q_2,\EInnerProd{\mu_2}{x_2}2\leq 0\right\}.
\end{equation*}
Moreover, \Cref{pb:dep_DLS} is equivalent to
\begin{problem}{}{pb:int_DLS}
\minimize{x\in\E}f(x_1)+\support Q{x_2-b}+\indic{\mathbf K}x,
\end{problem}where 
\begin{equation*}
\mathbf K = \left\{x\in\E\,\big|\,x_1\in K,\, x_2=Ax_1\right\},
\end{equation*}
in the sense that the value of \Cref{pb:int_DLS} is $V$ and $x$ is solution to \Cref{pb:int_DLS} \textit{iff} $x_2=Ax_1$ and $x_1$ is solution to \Cref{pb:dep_DLS}.

\paragraph{Qualification condition}

We require the following qualification condition.
\begin{enumerate}[resume*]
\item\label[assum]{assum:qualif} There exists a compact set $K^{-1}\subset K$ such that
\begin{equation}\label{eq:qualif}
-b\in\inter{Q_2^\ominus-A\,\cloconv{K^{-1}}}.
\end{equation}
\end{enumerate}

\begin{remark}
Note that, if $Q$ is bounded, then $Q_2 = \{0\}$, so that $Q_2^\ominus = \E_2$ and the qualification condition \eqref{eq:qualif} is satisfied.
\end{remark}

The dual problem to \Cref{pb:int_DLS} writes:
\begin{problem}{\textbf{D}}{pb:now_prim}
\minimize{\mu\in\E}\Fen f{\mu_1}+\indic Q{\mu_2}+\EInnerProd{\mu_2}b2+\support E\mu, \quad
\text{where } E=-\mathbf K. 
\end{problem}

\begin{lemma}
Under \Crefrange{struct:lpz}{assum:qualif}, the value of \Cref{pb:now_prim} is equal to $-V$. Moreover, if $V<+\infty$, then \Cref{pb:now_prim} has a solution.
\end{lemma}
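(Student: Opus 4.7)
The plan is to interpret \Cref{pb:now_prim} as the Fenchel--Rockafellar dual of \Cref{pb:int_DLS}, whose value is $V$ by the equivalence recorded with \Cref{pb:dep_DLS}, and then invoke \Cref{th:FenchelRockafellar}. To this end, I set $\X_1=\X_2=\E$, take the linear operator in \Cref{th:FenchelRockafellar} to be the identity on $\E$, and split the cost of \Cref{pb:int_DLS} as $F(x)+G(x)$ with
\begin{equation*}
F(x)=f(x_1)+\support Q{x_2-b},\qquad G(x)=\indic{\mathbf K}x.
\end{equation*}

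The first routine step is to check $F,G\in\plscc{\E}$. Under \Cref{struct:lpz}, $f$ is convex and finite hence continuous, so $f\in\plscc{\E_1}$; $\support Q{\cdot-b}$ is a proper, convex, lsc translate of the support function of a nonempty set (\Cref{struct:shape_Q}); hence $F\in\plscc\E$. Under \Cref{struct:shape_K,struct:shape_A}, $\mathbf K$ is nonempty (pick $x_1\in K$ and set $x=(x_1,Ax_1)$), convex, and closed (continuity of $A$), so $G\in\plscc\E$. Next, I compute the conjugates. The two terms of $F$ depend on disjoint coordinates, so after the substitution $y=x_2-b$ and Fenchel--Moreau applied to the closed convex set $Q$, I obtain
\begin{equation*}
F^*(\mu)=\Fen f{\mu_1}+\EInnerProd{\mu_2}b2+\indic Q{\mu_2},
\end{equation*}
while $G^*(-\mu)=\support{-\mathbf K}\mu=\support E\mu$. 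The sum reproduces the cost of \Cref{pb:now_prim} exactly.

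The step requiring care is the qualification condition \labelcref{eq:qualif_cond_gal}, which here reads $0\in\inter{\dom F-\mathbf K}$. Since $\dom F=\E_1\times(b+Q_2^\ominus)$ (as recorded in the text) and $\mathbf K=\{(x_1,Ax_1):x_1\in K\}$, the first coordinate is unconstrained, so the condition reduces to $-b\in\inter{Q_2^\ominus-AK}$. \Cref{assum:qualif} provides the stronger version $-b\in\inter{Q_2^\ominus-A\cloconv{K^{-1}}}$ for some compact $K^{-1}\subset K$; since $K$ is closed and convex, $\cloconv{K^{-1}}\subset K$, hence $Q_2^\ominus-A\cloconv{K^{-1}}\subset Q_2^\ominus-AK$, and the desired interior inclusion follows by monotonicity of the interior within the same ambient space. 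An application of \Cref{th:FenchelRockafellar} then gives both conclusions: \Cref{pb:int_DLS} and \Cref{pb:now_prim} have opposite values, so the value of \Cref{pb:now_prim} is $-V$, and when $V<+\infty$ the dual problem has a solution. I do not anticipate any serious obstacle beyond the elementary algebra for $F^*$ and the careful translation of the qualification in \Cref{assum:qualif} into the form required by \Cref{th:FenchelRockafellar}.
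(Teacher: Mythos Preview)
Your proposal is correct and follows essentially the same route as the paper: both apply \Cref{th:FenchelRockafellar} to \Cref{pb:int_DLS} with the split $F(x)=f(x_1)+\support Q{x_2-b}$ and $G=\indic{\mathbf K}{}$, reduce the qualification condition \labelcref{eq:qualif_cond_gal} to $-b\in\inter{Q_2^\ominus-AK}$ via $\dom F=\E_1\times(b+Q_2^\ominus)$, and deduce this from \Cref{assum:qualif} using $\cloconv{K^{-1}}\subset K$. Your write-up is merely more explicit about the conjugate computations and the passage from $K^{-1}$ to $K$, which the paper compresses into a single chain of inclusions.
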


\begin{proof}
By \Cref{th:FenchelRockafellar}, it suffices to verify that
$0_\E \in \inter{\dom{f} \times \dom{\support Q{\cdot-b}} -\mathbf K}$. The inclusion \eqref{eq:qualif} indeed implies that 
\begin{align*}
0_\E \in\E_1\times\inter{b+Q_2^\ominus-A\,\cloconv{K^{-1}}}&\subset\E_1\times\inter{b+Q_2^\ominus-AK}\\
&=\inter{\E_1\times \left(b+Q_2^\ominus-AK\right)}\\
&=\inter{\left(\E_1\times\left(b+Q_2^\ominus\right)\right)-\mathbf K},
\end{align*}
as was to be proved.
\end{proof}

We define $\psi\colon\E\to\bar\R$ as, for all $\mu\in\E$
\begin{equation}
\label{eq:psi-we-use}
\psi(\mu)=\Fen f{\mu_1}+\indic Q{\mu_2}+\EInnerProd{\mu_2}b2
\end{equation}
so that \Cref{pb:now_prim} has the form of \Cref{pb:prim}.
For future reference, we let $C_{K^{-1}}>0$ be such that $K^{-1}\subset B(0,C_{K^{-1}})$ and we set
\begin{equation*}
E^{-1}=\left\{x\in\E\,\big|\,x_1\in -K^{-1},x_2=Ax_1\right\}.
\end{equation*}

\subsection{Numerical assumptions and statement of the algorithm}
\label{sec:statement-algorithm}

Before providing the statement of the DLS algorithm, we list the required numerical assumptions for its implementation.

\begin{enumerate}[label = (\upshape N\arabic*), ref = \upshape N\arabic*]
\item\label[assum]{assum:oracle} For a given $\mu_1\in\E_1$, we are able to find efficiently a solution to the problem
\begin{problem}{}{pb:oracle_dls}
\minimize{v_1\in K}\EInnerProd{\mu_1}{v_1}1.
\end{problem}
In other words, we have a LMO on $K$ at $\mu_1$.

\item For a given $\mu_2\in \E_2$, we can find efficiently a vector $\bar\mu_2\in\argmin{\nu_2\in Q}\Enorm{\mu_2-\nu_2}2$. In other words, we can project efficiently onto $Q$. Notice that this also gives an easy access to $\dist{\mu_2}Q$.

\item\label[assum]{assum:simple_sets} For any simple set $K'$, we can solve efficiently the problem
\begin{problem}{}{pb:optim_combi_conv}
\minimize{x_1\in\E_1}f(x_1)+\support Q{Ax_1-b}+\indic{\cloconv{K'}}{x_1},
\end{problem}
where we say that $K'$ is simple if it is of the form $K^{-1}\cup S$ for some finite set $S$.
\end{enumerate}

Before we state our last numerical assumption, we specify the Bregman distance $B_{\Xi}$ that will be used and state a few basic properties that it satisfies. We define $\Xi\colon\E\to\bar\R$ as, for all $\mu\in\E$,
\begin{equation}
\label{eq:def-Xi}
\Xi(\mu)=\Fen f{\mu_1}+\frac12\sqEnorm{\mu_2}2.
\end{equation}

\begin{remark}[label=rem:subgrad_and_breg]
Notice that, thanks to \Cref{struct:lpz}, $\Xi$ is strongly convex, and that we have, for all $\mu\in\E$,
\begin{equation*}
\subgrad\Xi\mu=\subgrad{\Fen f{}}{\mu_1}\times\left\{\mu_2\right\}.
\end{equation*}
Thus, for all $\hat\mu\in\E$, $\mu_1\in\dom{\subgrad{\Fen f{}}{}}$, $w_1\in\subgrad{\Fen f{}}{\mu_1}$, and $\mu_2\in\E_2$, we have
\begin{equation}
\xiBreg{\hat\mu}{\mu}{(w_1,\mu_2)}=\fstarBreg{\hat\mu_1}{\mu_1}{w_1}+\frac12\sqEnorm{\hat\mu_2-\mu_2}2.
\end{equation}
\end{remark}

\begin{remark}
The idea of using a Bregman distance derived from $f^*$ is reminiscent from the article \cite{Bach2015Duality}, in which a dual interpretation of the agnostic FWA is given as a mirror descent algorithm. The Bregman distance involved in the mirror descent is the one associated with the Fenchel conjugate of the cost function.
\end{remark}

The following lemma is required for the statement of our last numerical assumption. Its proof is deferred to Section~\ref{sec:proofs}.

\begin{restatable}[label=it:esp]{lemma}{lemexpectation}
Let $S\subset\E_1$ be compact. There exists a bounded linear operator $\mathfrak E\colon\descrM{}{S}\to\E_1$ such that, for all $m\in\descrM{}{S}$, $\mathfrak Em$ is the unique vector verifying
\begin{equation*}
\forall\mu\in\E_1,\quad\int_S\EInnerProd\mu v1dm(v)=\EInnerProd\mu{\mathfrak Em}1.
\end{equation*}
Moreover, if $m\in\prob{S}$, then $\mathfrak Em\in \cloconv S.$
\end{restatable}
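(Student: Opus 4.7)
The plan is to construct $\mathfrak E m$ via the Riesz representation theorem applied to a suitable linear functional, and then use a Hahn--Banach separation argument for the second statement.

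\paragraph{Construction of $\mathfrak E m$.} For a fixed $m\in\descrM{}S$, I would first define the linear functional $L_m\colon\E_1\to\R$ by
\begin{equation*}
L_m(\mu)=\int_S\EInnerProd\mu v1\, dm(v).
\end{equation*}
This is well defined because $v\mapsto\EInnerProd\mu v1$ is continuous on the compact set $S$, hence bounded and $m$-integrable for any signed Radon measure of finite total variation. Since $S$ is compact, it is bounded by some constant $C_S>0$, and therefore
\begin{equation*}
\abs{L_m(\mu)}\leq \Enorm\mu1\, C_S\, \abs m(S),
\end{equation*}
which shows that $L_m\in\E_1^*$. The Riesz representation theorem in the Hilbert space $\E_1$ then yields the existence of a unique vector $\mathfrak Em\in\E_1$ satisfying $L_m(\mu)=\EInnerProd\mu{\mathfrak Em}1$ for all $\mu\in\E_1$; this is the defining identity in the statement, and uniqueness is automatic.

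\paragraph{Linearity and boundedness of $\mathfrak E$.} Linearity of $m\mapsto\mathfrak Em$ follows immediately from linearity of the integral with respect to $m$ combined with uniqueness of the Riesz representative. Boundedness follows from the estimate $\Enorm{\mathfrak Em}1=\Enorm{L_m}{\E_1^*}\leq C_S\abs m(S)$, so $\mathfrak E$ is continuous when $\descrM{}S$ is endowed with the total variation norm.

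\paragraph{Inclusion in the closed convex hull.} For $m\in\prob S$, I would argue by contradiction: suppose $\mathfrak Em\notin\cloconv S$. Since $\cloconv S$ is a nonempty closed convex subset of the Hilbert space $\E_1$, the Hahn--Banach separation theorem provides $\mu\in\E_1$ and $\alpha\in\R$ with
\begin{equation*}
\EInnerProd\mu{\mathfrak Em}1>\alpha\geq \EInnerProd\mu v1\quad\text{for all }v\in S.
\end{equation*}
Integrating the right inequality against the probability measure $m$ gives
\begin{equation*}
\EInnerProd\mu{\mathfrak Em}1=\int_S\EInnerProd\mu v1\, dm(v)\leq\alpha,
\end{equation*}
contradicting the strict inequality above. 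Hence $\mathfrak Em\in\cloconv S$.

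\paragraph{Main obstacle.} There is no serious obstacle here; the only subtle point is to avoid trying to integrate an $\E_1$-valued function directly (i.e.\ defining $\mathfrak Em=\int_S v\, dm(v)$ as a Bochner or Pettis integral), which would require additional measurability/separability considerations. The detour through the scalar functional $L_m$ and Riesz representation sidesteps these issues entirely and is clean in any Hilbert space.
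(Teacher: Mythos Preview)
Your proof is correct. The construction of $\mathfrak E$ via Riesz representation, together with the linearity and boundedness arguments, matches the paper's proof essentially line for line.

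For the inclusion $\mathfrak Em\in\cloconv S$ when $m\in\prob S$, you use Hahn--Banach separation, whereas the paper argues through support-function duality: it writes $\indic{\cloconv S}{\mathfrak Em}=\Fen{\support S{}}{\mathfrak Em}=\sup_\mu\big(\EInnerProd\mu{\mathfrak Em}{}-\support S\mu\big)$ and shows each term in the supremum is $\leq 0$ by integrating the Fenchel--Young inequality $\EInnerProd\mu v{}-\support S\mu\leq\indic Sv=0$ against $m$. The two arguments are really the same idea in different clothing (your separating $\mu$ with $\alpha=\support S\mu$ is exactly the $\mu$ that would witness a positive value of the supremum), so this is a cosmetic rather than a substantive difference; your contradiction form is perhaps the more classical phrasing, while the paper's version fits its ambient convex-analytic framework.
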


\begin{remark}
The vector $\mathfrak Em$ can be interpreted as the $m$-average over $S$. We also point out that simple sets are compact.
\end{remark}

Our last numerical assumption is the following.

\begin{enumerate}[label = (\upshape N\arabic*), ref = \upshape N\arabic*, start = 4]
\item\label[assum]{assum:subpb} Let $S\subset \E_1$ be simple. We can compute efficiently a solution to the problem
\begin{problem}{}{pb:expl_dual_proj}
\minimize{m\in-\descrM+S}\, \tilde f\left(-\mathfrak Em+w_1^t,1-m(S)\right)
+\frac12\left(\sqEnorm{\tilde\mu_2}{2}-\sqdist{\tilde\mu_2}{Q}\right)
-\ell^t m(S),
\end{problem}
where $\tilde\mu_2=-A\,\mathfrak Em+m(S)b+\mu_2^t$.
\end{enumerate}

\begin{remark}
If $S$ is a finite set $\left\{s_j, j\in\IntInt1{J}\right\}$, then finding $m\in\descrM{}{S}$ consists in finding a vector $\left(m_j\right)_{j\in\IntInt1{J}}\in\R^{J}$, and in this case $\mathfrak Em=\sum_{j=1}^{J}m_js_j$.
\end{remark}

\begin{remark}
The availability of efficient methods for the resolution of \Cref{pb:optim_combi_conv,pb:expl_dual_proj}, involved in the numerical \cref{assum:simple_sets,assum:subpb}, heavily depends on the nature of $f$ and $Q$. Let us insist on the fact that they do not need the knowledge of $f^*$.
Concerning the resolution of \Cref{pb:optim_combi_conv}, we note that, if $Q$ is bounded and $K^{-1}$ is finite, then the problem amounts to minimizing a Lipschitz-continuous cost function over a convex hull, which can be done with the mirror descent algorithm \cite{beck2003mirror}.
\Cref{pb:expl_dual_proj} involves a smooth cost function to be minimized with simple constraints. Thus it can therefore easily be handled with the projected gradient method.
Let us also note that the complexity of the two problems possibly increases along the iterations as we have no a priori upper bounds on the cardinality of $K'$. We will further comment on this issue in the conclusion of the article. 
\end{remark}

We are finally in position to state our DLS algorithm, its statement is provided in \Cref{alg:dualExtLNN}. The main idea to obtain it is to apply \Cref{alg:ExtLNN_prun} to \cref{pb:now_prim}, which gives \Cref{alg:LNN_appli_formal} (where we use \eqref{eq:psi-we-use} as well as \Cref{rem:subgrad_and_breg} to rewrite expressions depending on $\psi$ and $\Xi$ in terms of $f^\ast$ and $Q$). Suitable manipulations, which we detail below in Subsection~\ref{subsec:equivalence}, allow one to prove that our DLS algorithm, \Cref{alg:dualExtLNN}, is an instance of \Cref{alg:LNN_appli_formal}. 

\begin{figure}[p!]
\begin{algorithm}[H]
\label[algo]{alg:dualExtLNN}
\caption{Dualized Level-Set method for \Cref{pb:int_DLS}}
\SetKwBlock{Oracle}{Oracle:}{}
\SetKwBlock{DualUpdate}{Dual update:}{}
\SetKwBlock{PrimalUpdate}{Primal update:}{}
\SetKwBlock{DualCandidate}{Dual candidate:}{}
\SetKwBlock{Pruning}{Pruning:}{}
\textbf{Require:} $\mu^0 \in \dom{\subgrad{\Fen f{}}{}}\times Q$ such that $\mu^0_1 + A^\ast \mu^0_2 \in \dom{\support{-K}{}}$, $w^0_1\in\subgrad{\Fen f{}}{\mu_1^0}$, $\lambda\in(0,1)$\;
Set $\bar h^{-1}=+\infty$, $\hat\mu^{-1}=\mu^0$, and $\bar\Delta=+\infty$\;
\For{$t=0,\ldots$}{
\emph{Available at iteration $t$:}
$\mu^t\in\E$, $\hat\mu^{t-1}\in\E$, $\hat x_1^t\in\E_1$, $w^t_1\in\E_1$, $\bar h^{t-1}\in\bar\R^+$, $\bar\Delta\in\R_+\cup\{+\infty\}$, $K^{t-1}\subset\E_1$\;
\Oracle{
Find $v_1^t\in-\argmin{v_1\in K}\EInnerProd{\mu_1^t+A^*\mu_2^t}{v_1}1$ and set $\tilde K^t=\cloconv{K^{t-1}\cup\left\{-v_1^t\right\}}$\;
}
\smallskip
\DualUpdate{
Set 
$\bar h^t=\min\left\{\bar h^{t-1},\EInnerProd{\mu_2^t}b2+\EInnerProd{\mu_1^t}{w_1^t}1+\EInnerProd{\mu_1^t+A^*\mu_2^t}{v_1^t}1-f\left(w_1^t\right)\right\}
$\;
\eIf
{$\bar h^t<\bar h^{t-1}$}
{Set $\hat\mu^t=\mu^t$}
{Set $\hat\mu^t=\hat\mu^{t-1}$}
}
\smallskip
\PrimalUpdate{
Take a solution $\hat x_1^t$ to:
$\displaystyle\minimize{x_1 \in \tilde{K}^t} f (x_1) + \sigma_Q(Ax_1 - b)$\;
Set $\underline h^t = -f\left(\hat x^t_1\right)-\support Q{A\hat x^t_1-b}$, $\Delta^t = \bar h^t-\underline h^t$, and $\ell^t = \lambda \bar h^t+(1-\lambda)\underline h^t$\;
}
\smallskip
\Pruning{
\eIf{
$\Delta^t<(1-\lambda)\bar\Delta$
}
{
Take $K^t\subset\cloconv{K^{t-1}\cup\left\{-v^t\right\}}$ simple and such that
$\hat x_1^t\in\cloconv{K^t}$\;
Set $\bar\Delta=\Delta^t$\;
}
{
Set $K^t = K^{t-1}\cup\left\{-v_1^t\right\}$\;
}
}
\smallskip
\DualCandidate{
Find $m^t\in -\descrM{+}{K^t}$ solution to \Cref{pb:expl_dual_proj} with $S = K^t$\; \vspace{1mm}
Set $w^{t+1}_1=\frac{w^t_1 - \mathfrak E m^t}{1 - m^t(K^t)}$, $\mu_1^{t+1}=\grad f{w^{t+1}_1}$, and $\mu_2^{t+1}=\proj{\mu_2^t+m^t(K^t)b-A\,\mathfrak Em^t}Q$\;
}
\smallskip
}
\end{algorithm}

\medskip

\begin{algorithm}[H]
\label[algo]{alg:LNN_appli_formal}
\caption{Extended Level-Set method for \Cref{pb:now_prim}}
\SetKwBlock{Oracle}{Oracle:}{}
\SetKwBlock{DualUpdate}{Dual update:}{}
\SetKwBlock{PrimalUpdate}{Primal update:}{}
\SetKwBlock{DualCandidate}{Dual candidate:}{}
\SetKwBlock{Pruning}{Pruning:}{}
\textbf{Require:} $\mu^0\in\dom{\support E{}}\cap\left(\dom{\subgrad{\Fen f{}}{}}\times Q\right)$, $w_1^0\in\subgrad{\Fen f{}}{\mu_1^0}$, $\lambda\in(0,1)$\;
Set $\bar h^{-1}=+\infty$ and $\bar\Delta = +\infty$\;
\For{$t=0,\ldots$}{
\emph{Available at iteration $t$:}
$\mu^t\in\E$, $w^t\in\E$, $\bar h^{t-1}\in\bar\R^+$, $\bar\Delta\in\R_+\cup\{+\infty\}$, $E^{t-1}\subset\E$\;
\Oracle{
Choose $v^t\in\subgrad{\support E{}}{\mu^t}$ and set $\tilde{E}^t= \cloconv{E^{t-1}\cup\left\{v^t\right\}}$\;
}
\DualUpdate{
Set $\bar h^t=\min\left\{\bar h^{t-1},\Fen f{\mu_1^t}+\EInnerProd{\mu_2^t}b2+\support E{\mu^t}\right\}$\;
}
\smallskip
\PrimalUpdate{
Take a solution $\hat x^t$ to:
$\displaystyle\minimize{(x_1,x_2) \in -\tilde{E}^t} f (x_1) + \sigma_Q(x_2 - b)$\;
Set
$\underline h^t=\displaystyle\inf_{\mu\in\E_1\times Q}\Fen f{\mu_1}+\EInnerProd{\mu_2}b2+\support{\tilde E^{t}}\mu$, $\Delta^t = \bar h^t-\underline h^t$, and $\ell^t = \lambda \bar h^t+(1-\lambda)\underline h^t$\;
}
\smallskip
\Pruning{
\emph{As in \Cref{alg:ExtLNN_prun}.}
}
\smallskip
\DualCandidate{
Set $Q^t = \{\psi+\support{E^t}{}\leq \ell^t\}$\;
Take $\mu^{t+1}$ as the solution to:
$\minimize{\mu\in Q^t} \fstarBreg{\mu_1}{\mu_1^t}{w_1^t}+\frac12\sqEnorm{\mu_2-\mu_2^t}2$\;
Take $w_1^{t+1}\in\subgrad{\Fen f{}}{\mu_1^{t+1}}$ such that
$(w_1^t-w_1^{t+1}, \mu_2^t -\mu_2^{t+1}) \in N_{Q^t}(\mu^{t+1})$\;
}
\smallskip
}
\end{algorithm}
\end{figure}

\subsection{Duality between the ELS and the DLS methods}
\label{subsec:equivalence}

We justify in this subsection the fact that our DLS method, \Cref{alg:dualExtLNN}, is an instance of the ELS method applied to the dual problem \Cref{pb:now_prim}, \Cref{alg:LNN_appli_formal}.

We first note that the requirements of \Cref{alg:dualExtLNN,alg:LNN_appli_formal} coincide, since $\dom{\support{E}{}} = \{\mu \in \E \mid \mu_1 + A^\ast \mu_2 \in \dom{\support{-K}{}}\}$. We next study the relations between corresponding steps of \Cref{alg:dualExtLNN,alg:LNN_appli_formal}.

\paragraph{Oracle}
From \Cref{coro:subgrad_oracle} and the definition of the adjoint operator, we get that, for all $\mu'\in\E$, if we set $\mu_1=\mu'_1+A^*\mu'_2$ and let $v_1$ be given by the LMO on $K$ at $\mu_1$, then $-(v_1,Av_1)\in\subgrad{\support E{}}{\mu'}.$
Thus, the LMO on $K$ allows us to find an element of $\subgrad{\support E{}}{\mu^t}$. Also, for all $t\in\N$, since $A$ is linear and bounded, we have $\tilde E^t=\left\{x\in\E\,|\,x_1\in -\tilde K^t, x_2=Ax_1\right\}$.

\paragraph{Dual update} The update of $\bar h^t$ in \Cref{alg:dualExtLNN} is justified using the \textbf{Oracle}, \Cref{lem:FenchelYoungEq}, and \Cref{rem:subgrad_support}.

\paragraph{Primal update} First notice that, for all $\mu\in\E$, we have
\begin{equation*}
\Fen{\left(f\oplus\support Q{\cdot-b}{}\right)}{\mu}=\Fen f{\mu_1}+\InnerProd{\mu_2}b+\indic Q{\mu_2}.
\end{equation*}
Let now $t\in\N$. Since $\tilde E^{t}$ is bounded, we have 
\begin{align*}
\inf_{\mu\in \E_1\times Q}\Fen f{\mu_1}+\EInnerProd{\mu_2}b2+\support{\tilde E^{t}}\mu & = -\inf_{x\in\E}f(x_1)+\support Q{x_2-b}+\indic{-\tilde E_t}x \\
&=-\inf_{x_1\in\E_1}f(x_1)+\support Q{Ax_1-b}+\indic{\tilde K^{t}}{x_1}.
\end{align*}
This justifies the updates of $\underline h^t$ and $\hat x^t$.

\paragraph{Pruning}
Notice that, for all $t\in\N$, the set $K^t$ is simple, and thus compact, and that the set $E^t$ can always be taken as
\begin{equation}\label{eq:def_Et}
E^t=\left\{x\in\E\,|\, x_1\in -K^t,\, x_2=Ax_1\right\}.
\end{equation}

\paragraph{Dual candidate}

We focus on the projection problem, that is,
\begin{problem}{}{pb:proj}
\minimize{\mu\in Q^t} \fstarBreg{\mu_1}{\mu_1^t}{w_1^t}+\frac12\sqEnorm{\mu_2-\mu_2^t}2.
\end{problem}
The next proposition, whose proof is provided in Section~\ref{sec:proofs}, collects the properties of this problem that allow one to justify that the \textbf{Dual candidate} step of \Cref{alg:dualExtLNN} is an instance of the corresponding step of \Cref{alg:LNN_appli_formal}.

\begin{prop}[label=prop:projection]
Let $t\in\N$ and $m^t$ be a solution to \Cref{pb:expl_dual_proj} with $S = K^t$. The following hold:
\begin{enumerate}[label = \itshape\roman*\,\upshape), ref = \itshape\roman*]
\item\label{res:duality} Up to a shift in its value, \Cref{pb:expl_dual_proj} is the dual of \Cref{pb:proj}.
\item\label{res:update} Set $w_1^{t+1}$ and $\mu^{t+1}$ as in \Cref{alg:dualExtLNN}, i.e., as
\begin{align*}
w^{t+1}_1&=\frac{w^t_1-\mathfrak Em^t}{1-m^t(K^t)}\\
\mu_1^{t+1}&=\grad f{w_1^{t+1}}\\
\mu_2^{t+1}&=\proj{\mu_2^t+m^t(K^t)b-A\,\mathfrak Em^t}Q.
\end{align*}
Then:
\begin{enumerate}[label = \itshape\alph*), ref = \itshape\alph*]
\item\label{res:update_mu} The solution to \Cref{pb:proj} is $\mu^{t+1}$.
\item\label{res:update_w} We have $w^{t+1}_1\in\subgrad{\Fen f{}}{\mu_1^{t+1}}$ and $(w_1^t-w_1^{t+1}, \mu_2^t -\mu_2^{t+1}) \in N_{Q^t}(\mu^{t+1})$.
Moreover, if \Cref{assum:subgrad} is satisfied, then we can bound $w_1^t$ uniformly in $t$.
\end{enumerate}
\end{enumerate}
\end{prop}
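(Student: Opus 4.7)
The plan is to identify (pb:expl_dual_proj) as the (negated) Lagrangian dual of (pb:proj) for the single inequality constraint $\psi(\mu)+\sigma_{E^t}(\mu)\leq\ell^t$ defining $Q^t$, and then recover the primal solution $\mu^{t+1}$ from the KKT conditions. Throughout, we use that $E^t=\{(x_1,Ax_1)\colon x_1\in-K^t\}$, so $\sigma_{E^t}(\mu)=\sigma_{-K^t}(\mu_1+A^*\mu_2)$, and that $K^t$ is compact (so \Cref{it:esp} applies). Strong duality holds when $\Delta^t>0$ via Slater's condition, since $\ell^t>\underline h^t$ implies that $Q^t$ has nonempty interior; the case $\Delta^t=0$ is trivial.

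For \labelcref{res:duality}\,, introduce a scalar multiplier $\alpha\geq 0$. The delicate term in the Lagrangian is $\alpha\sigma_{-K^t}(\mu_1+A^*\mu_2)$. Using compactness of $K^t$ and the operator $\mathfrak E$ from \Cref{it:esp},
\begin{equation*}
\alpha\sigma_{-K^t}(\nu)=\sup\bigl\{\EInnerProd\nu{-\mathfrak Em'}{}\,\big|\,m'\in\descrM+{K^t},\,m'(K^t)=\alpha\bigr\},
\end{equation*}
so we reparametrize the pair $(\alpha,m')$ by the signed measure $m=-m'\in-\descrM+{K^t}$, with $\alpha=-m(K^t)$. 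After substitution, the infimum over $\mu$ decouples. The $\mu_1$-inf is $-\tilde f(w_1^t-\mathfrak Em,1-m(K^t))$ by the definition of the perspective function and $\Fen{\Fen f{}}{}=f$ (\Cref{th:FenchelMoreau}). The $\mu_2$-inf is the projection-type problem
\begin{equation*}
\inf_{\mu_2}\Bigl[\tfrac12\sqEnorm{\mu_2-\mu_2^t}2+\EInnerProd{\mu_2}{A\mathfrak Em-m(K^t)b}2+(-m(K^t))\indic Q{\mu_2}\Bigr],
\end{equation*}
whose value equals $\tfrac12\sqdist{\tilde\mu_2^t}Q-\tfrac12\sqEnorm{\tilde\mu_2^t}2+\tfrac12\sqEnorm{\mu_2^t}2$, with $\tilde\mu_2^t=-A\mathfrak Em+m(K^t)b+\mu_2^t$. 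This identity covers the inactive case $m(K^t)=0$ thanks to the invariant $\mu_2^t\in Q$, which follows by induction from $\mu_2^0\in Q$ (\Cref{struct:mu_0}) and the projection update. Flipping the sign of the supremum and dropping the additive constant $\tfrac12\sqEnorm{\mu_2^t}2$ reproduces precisely (pb:expl_dual_proj).

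For \labelcref{res:update}\,, the first-order condition in $\mu_1$ inside the Lagrangian reads $w_1^t-\mathfrak Em^t\in(1-m^t(K^t))\,\subgrad{\Fen f{}}{\mu_1^{t+1}}$; since $1-m^t(K^t)>0$, this is exactly $w_1^{t+1}\in\subgrad{\Fen f{}}{\mu_1^{t+1}}$ with the algorithm's update, and \Cref{lem:FenchelYoungEq} then yields $\mu_1^{t+1}=\grad f{w_1^{t+1}}$, a singleton by \Cref{struct:lpz}. The $\mu_2$-inf is solved at $\mu_2^{t+1}=\proj{\tilde\mu_2^t}Q$, matching the algorithm and proving \labelcref{res:update_mu}\,. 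The normal-cone inclusion in \labelcref{res:update_w}\, comes from the first-order condition for (pb:proj), $0\in\subgrad F{\mu^{t+1}}+N_{Q^t}(\mu^{t+1})$, where $F(\mu)=\fstarBreg{\mu_1}{\mu_1^t}{w_1^t}+\tfrac12\sqEnorm{\mu_2-\mu_2^t}2$ has subdifferential $(\subgrad{\Fen f{}}{\mu_1^{t+1}}-w_1^t)\times\{\mu_2^{t+1}-\mu_2^t\}$ at $\mu^{t+1}$; selecting the representative $w_1^{t+1}$ gives $(w_1^t-w_1^{t+1},\mu_2^t-\mu_2^{t+1})\in N_{Q^t}(\mu^{t+1})$.

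For the uniform bound on $w_1^t$, the recursion $(1-m^t(K^t))w_1^{t+1}=w_1^t-\mathfrak Em^t$ together with $\|\mathfrak Em^t\|\leq-m^t(K^t)\cdot\sup_{v\in K^t}\|v\|$ and the $t$-independent inclusion $K^t\subset B(0,\max(C_{K^{-1}},C_{\oracle}))$ (from \Cref{assum:subgrad} and the pruning rule) yields $\|w_1^t\|\leq\max(\|w_1^0\|,\max(C_{K^{-1}},C_{\oracle}))$ by induction. The main obstacle is the parametrization step: carefully tracking the signs when turning the pair $(\alpha,m')$ into a single negative measure $m$, and verifying that the closed-form expression for the $\mu_2$-inf remains valid in the inactive case via the invariant $\mu_2^t\in Q$. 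The remaining arguments are standard KKT bookkeeping.
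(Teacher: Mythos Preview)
Your approach is sound in outline and reaches the right dual, but it differs from the paper's route and one step is genuinely incomplete.

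\medskip
\noindent\textbf{Comparison.} The paper does not use a scalar multiplier $\alpha\ge0$. Instead it lifts \Cref{pb:proj} by an epigraph variable $z\ge\Fen f{\mu_1}$, which makes every constraint \emph{affine} in $(\mu,z)$ (one constraint per $v_1\in K^t$). Fenchel--Rockafellar duality with the operator $L^t\colon\E\times\R\to\descrC{}{K^t}{}{}$ then produces the measure dual variable directly, and strong duality is checked via a Slater point for the lifted problem (\Cref{th:opt_cond}). Your scalar-$\alpha$ Lagrangian followed by the reparametrisation $\alpha\sigma_{-K^t}(\nu)=\sup_{m'\ge0,\,m'(K^t)=\alpha}\EInnerProd{\nu}{-\mathfrak Em'}1$ reaches the same dual, but the replacement of $\inf_\mu\sup_{m'}$ by $\sup_{m'}\inf_\mu$ is a minimax exchange that you should name (Sion applies: $L'$ is convex in $\mu$, affine in $m'$, and the slice $\{m'\ge0,\,m'(K^t)=\alpha\}$ is weak-$*$ compact). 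The paper's lift avoids this issue entirely.

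\medskip
\noindent\textbf{The gap.} Your justification of $(w_1^t-w_1^{t+1},\mu_2^t-\mu_2^{t+1})\in N_{Q^t}(\mu^{t+1})$ does not follow from the line you wrote. The optimality condition $0\in\subgrad F{\mu^{t+1}}+N_{Q^t}(\mu^{t+1})$ only guarantees that \emph{some} element of $\subgrad{\Fen f{}}{\mu_1^{t+1}}-w_1^t$ lies in $-N_{Q^t}(\mu^{t+1})$; it does not let you ``select'' the particular representative $w_1^{t+1}$ produced by the Lagrangian. What is actually needed is the complementary-slackness half of your KKT system. Concretely, set
\[
H(\mu)=(-m^t(K^t))\bigl[\Fen f{\mu_1}+\EInnerProd{\mu_2}b2-\ell^t\bigr]+\EInnerProd{\mu_1+A^*\mu_2}{\mathfrak Em^t}1+(-m^t(K^t))\indic Q{\mu_2},
\]
the linearised constraint term in your $L'$. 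One has $H(\mu)\le(-m^t(K^t))\,g(\mu)\le0$ for all $\mu\in Q^t$, and the saddle-point property of $(\mu^{t+1},m^t)$ forces $H(\mu^{t+1})=0$. These two facts give $\subgrad H{\mu^{t+1}}\subset N_{Q^t}(\mu^{t+1})$. Splitting the stationarity condition $0\in\partial_\mu L'(\mu^{t+1},m^t)$ with the \emph{same} subgradient $w_1^{t+1}$ in both the $F$-part and the $H$-part (which is legitimate since the two $f^*$ contributions combine into $(1-m^t(K^t))f^*$) then yields exactly the desired inclusion. The paper carries out this verification explicitly as \Cref{lem:ineq_wmu}, computing $\Lambda^t(\mu)=\EInnerProd{w_1^t-w_1^{t+1}}{\mu_1-\mu_1^{t+1}}1+\EInnerProd{\mu_2^t-\mu_2^{t+1}}{\mu_2-\mu_2^{t+1}}2\le0$ for all $\mu\in Q^t$ by hand and invoking the optimality condition $L^t(\mu^{t+1},\Fen f{\mu_1^{t+1}})\in\subgrad{\Fen{G^t}{}}{-m^t}$ (their form of complementary slackness). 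Your sketch omits precisely this ingredient.

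\medskip
\noindent Your uniform bound on $w_1^t$ is fine and coincides with the paper's \Cref{lem:wt}: the recursion shows $w_1^{t+1}\in\cloconva{\{w_1^t\}\cup K^t}$, hence $\Enorm{w_1^t}1\le\max(\Enorm{w_1^0}1,\max(C_{K^{-1}},C_{\oracle}))$.
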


\subsection{Convergence analysis}

We now want to prove that, under our standing assumptions, \Cref{alg:dualExtLNN} converges. At the light of Subsection~\ref{subsec:equivalence}, it suffices to show that \Cref{alg:LNN_appli_formal} converges and, for that purpose, one is left to verify that the assumptions of the convergence theorem for \Cref{alg:ExtLNN_prun}, Theorem~\ref{th:cv_LMP}, are satisfied in our setting. We assume in the sequel that \Crefrange{struct:lpz}{assum:qualif} are verified.

We start by remarking that, thanks to \eqref{eq:psi-we-use}, \Cref{rem:subgrad_and_breg}, and the definition of $E$ in \Cref{pb:now_prim}, we immediately obtain the following result from \Cref{struct:mu_0} and a straightforward computation.

\begin{lemma}\label{lem:H1}
\Cref{assum:mu_0} is satisfied. 
\end{lemma}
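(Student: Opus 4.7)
The plan is to exhibit the point $\mu^0$ provided by \Cref{struct:mu_0} directly as an element of the intersection $\dom{\support E{}}\cap\dom{\subgrad\Xi{}}\cap\dom\psi$. To do so, I would first rewrite each of the three domains explicitly in terms of the data of \Cref{pb:dep_DLS}.

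From \eqref{eq:psi-we-use}, the linear term $\langle \mu_2, b\rangle_2$ is finite everywhere, so $\dom \psi = \dom{\Fen f{}} \times Q$. From \Cref{rem:subgrad_and_breg}, the subdifferential $\subgrad\Xi\mu$ factors as $\subgrad{\Fen f{}}{\mu_1}\times\{\mu_2\}$, which gives $\dom{\subgrad\Xi{}} = \dom{\subgrad{\Fen f{}}{}}\times\E_2$. Finally, since $E = -\mathbf K = \{(y_1, Ay_1) \mid y_1 \in -K\}$, a short computation using the definition of the adjoint yields, for all $\mu \in \E$,
\begin{equation*}
\support E\mu = \sup_{y_1 \in -K} \EInnerProd{\mu_1 + A^\ast \mu_2}{y_1}{1} = \support{-K}{\mu_1 + A^\ast \mu_2},
\end{equation*}
so that $\dom{\support E{}} = \{\mu \in \E \mid \mu_1 + A^\ast \mu_2 \in \dom{\support{-K}{}}\}$.

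With these three identifications in hand, the conclusion is essentially immediate. Indeed, \Cref{struct:mu_0} provides $\mu^0 \in \dom{\subgrad{\Fen f{}}{}} \times Q$ with $\mu^0_1 + A^\ast \mu^0_2 \in \dom{\support{-K}{}}$; since $\dom{\subgrad{\Fen f{}}{}} \subset \dom{\Fen f{}}$, it follows at once that $\mu^0$ belongs to all three domains, and hence to their intersection. There is no real obstacle in the argument: the lemma is a direct translation of \Cref{struct:mu_0}, and the only mildly substantive step is the computation of $\support E{}$ in terms of $\support{-K}{}$ through the adjoint $A^\ast$.
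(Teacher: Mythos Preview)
Your proposal is correct and follows exactly the approach the paper indicates: the paper states that the lemma is obtained immediately from \Cref{struct:mu_0} together with \eqref{eq:psi-we-use}, \Cref{rem:subgrad_and_breg}, and the definition of $E$, and you simply spell out the ``straightforward computation'' the paper alludes to (the identification $\dom{\support E{}} = \{\mu \in \E \mid \mu_1 + A^\ast \mu_2 \in \dom{\support{-K}{}}\}$ is also stated explicitly at the start of \Cref{subsec:equivalence}).
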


We next turn to the verification of \Cref{assum:psi_growth}.

\begin{lemma}\label{lem:psi_growth}
\Cref{assum:psi_growth} is satisfied. 
\end{lemma}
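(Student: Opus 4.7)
\Cref{assum:psi_growth} splits into two claims: boundedness of $E^{-1}$ and coercivity of $\psi+\support{E^{-1}}{}$. Boundedness is immediate, since $E^{-1}=\{(x_1,Ax_1):x_1\in-K^{-1}\}$, $K^{-1}$ is compact by \Cref{assum:qualif}, and $A$ is bounded (\Cref{struct:shape_A}). For the coercivity, unfolding $\support{E^{-1}}{\mu}=\support{-K^{-1}}{\mu_1+A^*\mu_2}$ via the definition of the adjoint, the function to be shown coercive reads
\[
\Phi(\mu)=\Fen f{\mu_1}+\indic Q{\mu_2}+\EInnerProd{\mu_2}{b}{2}+\support{-K^{-1}}{\mu_1+A^*\mu_2}.
\]
My plan is to lower-bound $\Phi$ by an expression of the form $\phi_1(\mu_1)+\phi_2(\mu_2)-C$, with each $\phi_i$ coercive and bounded below on its respective space, and then invoke \Cref{lem:sum_coer}.

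The coercivity in $\mu_1$ is essentially free: \Cref{lem:f_star_coercive} together with \Cref{struct:lpz} makes $\Fen f{}$ coercive, while the coupled term is controlled linearly by $\Enorm{\mu_1}{1}$ since $K^{-1}\subset B(0,C_{K^{-1}})$. The main obstacle is to extract a coercive-in-$\mu_2$ contribution from the same coupled term $\support{-K^{-1}}{\mu_1+A^*\mu_2}$. This is precisely where the qualification \Cref{assum:qualif} enters: it provides $r>0$ with $B(-b,r)\subset Q_2^\ominus-A\,\cloconv{K^{-1}}$, so for each unit vector $e\in\E_2$ one can select $v_e\in\cloconv{K^{-1}}$ and $z_e\in Q_2^\ominus$ satisfying $Av_e=z_e+b-\tfrac r2 e$; the boundedness of $\cloconv{K^{-1}}$ and of $A$ then yields a uniform bound $\Enorm{z_e}{2}\leq C_z$.

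The core computation is then, for $\mu_2\in Q\setminus\{0\}$, to plug the vector $v_e$ with $e=\mu_2/\Enorm{\mu_2}{2}$ into the variational lower bound $\support{-K^{-1}}{\mu_1+A^*\mu_2}\geq-\EInnerProd{\mu_1}{v_e}{1}-\EInnerProd{\mu_2}{Av_e}{2}$. Substituting $Av_e=z_e+b-\tfrac r2 e$ produces an $-\EInnerProd{\mu_2}{b}{2}$ term that cancels the one in $\psi$; writing $\mu_2=q_1+q_2$ with $q_1\in Q_1$, $q_2\in Q_2$ yields $\EInnerProd{\mu_2}{z_e}{2}\leq C_{Q_1}\Enorm{z_e}{2}$ (using $\EInnerProd{q_2}{z_e}{2}\leq 0$ by definition of $Q_2^\ominus$). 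Assembling these estimates leaves
\[
\Phi(\mu)\geq\bigl(\Fen f{\mu_1}-C_{K^{-1}}\Enorm{\mu_1}{1}\bigr)+\bigl(\indic Q{\mu_2}+\tfrac r2 \Enorm{\mu_2}{2}\bigr)-C_{Q_1}C_z,
\]
with both brackets coercive and bounded below on $\E_1$ and $\E_2$ respectively, so \Cref{lem:sum_coer} concludes.
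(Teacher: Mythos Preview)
Your proof is correct and follows essentially the same route as the paper's: both use the qualification condition \eqref{eq:qualif} to produce, for each direction $\mu_2/\Enorm{\mu_2}{2}$, a pair $(v_e,z_e)\in\cloconv{K^{-1}}\times Q_2^\ominus$, plug $v_e$ into the variational definition of $\support{-K^{-1}}{}$, cancel the $\EInnerProd{\mu_2}{b}{2}$ term, control $\EInnerProd{\mu_2}{z_e}{2}$ via the decomposition $Q=Q_1+Q_2$ and the definition of $Q_2^\ominus$, and conclude with \Cref{lem:sum_coer,lem:f_star_coercive}. The only cosmetic differences are your use of $r/2$ in place of the paper's $\varepsilon$ and your slightly more compact packaging of the final lower bound.
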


\begin{proof}
We recall that we defined the set $E^{-1}$ as
\begin{equation*}
E^{-1}=\left\{x\in\E\,\big|\,x_1\in-K^{-1},\,x_2=Ax_1\right\},
\end{equation*}
and that we want to show that $E^{-1}$ is bounded and
\begin{equation*}
\converge{\psi(\mu)+\support{E^{-1}}\mu}{+\infty}{\Enorm\mu{}}{+\infty}.
\end{equation*}
Boundedness of $E^{-1}$ is immediate since $K^{-1}$ is bounded and $A$ is a bounded linear operator.

The qualification condition \labelcref{eq:qualif} implies that there exists $\varepsilon>0$ such that for all $\mu_2\in\E_2\setminus\{0\}$ we have
\begin{equation*}
\varepsilon\frac{\mu_2}{\Enorm{\mu_2}2}-b\in Q_2^\ominus-A\,\cloconv{K^{-1}}.
\end{equation*}
In other words, there exists $\varepsilon>0$ such that for all $\mu_2\in\E_2\setminus\{0\}$, there exists $x_{\mu_2}\in \cloconv{K^{-1}}$ and $z_{\mu_2}\in Q_2^\ominus$ such that 
\begin{equation}\label{eq:decomp_qualif1}
\varepsilon\frac{\mu_2}{\Enorm{\mu_2}2}-b=z_{\mu_2}-Ax_{\mu_2}.
\end{equation}
Now, we fix such an $\varepsilon>0$. \Cref{eq:decomp_qualif1} implies that, for all $\mu_2\in\E$, there exists $x_{\mu_2}\in \cloconv{K^{-1}}$ and $z_{\mu_2}\in Q_2^\ominus$ such that
\begin{equation}\label{eq:decomp_qualif2}
\varepsilon\Enorm{\mu_2}2-\EInnerProd{\mu_2}b2=\EInnerProd{\mu_2}{z_{\mu_2}}2-\EInnerProd{\mu_2}{Ax_{\mu_2}}2.
\end{equation}
Notice that \cref{eq:decomp_qualif2} also holds for $\mu_2=0$ (with arbitrary choices of $x_{\mu_2}\in \cloconv{K^{-1}}$ and $z_{\mu_2}\in Q_2^\ominus$).
Let then $\mu_1\in\E_1$ and $\mu_2\in Q$. We know that
\begin{itemize}
\item Using the decomposition $Q=Q_1+Q_2$, there exist $\mu_2^a\in Q_1$ and $\mu_2^b\in Q_2$ such that
\begin{equation}\label{decomp:Q}
\mu_2=\mu_2^a+\mu_2^b.
\end{equation}
\item There exist $x_{\mu_2}\in \cloconv{K^{-1}}$ and $z_{\mu_2}\in Q_2^\ominus$ satisfying \cref{eq:decomp_qualif2}.
\end{itemize}
We have
\begin{align}
\psi(\mu)+\support{E^{-1}}\mu&=\Fen f{\mu_1}+\EInnerProd{\mu_2}b2+\indic Q{\mu_2}+\support{E^{-1}}\mu\notag\\
&\geq \Fen f{\mu_1}+\EInnerProd{\mu_2}b2-\EInnerProd{\mu_1}{x_{\mu_2}}1-\EInnerProd{\mu_2}{Ax_{\mu_2}}2\label{l:1}\\
&\geq \Fen f{\mu_1}-C_{K^{-1}}\Enorm{\mu_1}1+\varepsilon\Enorm{\mu_2}2-\EInnerProd{\mu_2}{z_{\mu_2}}2\label{l:2}\\
&\geq \Fen f{\mu_1}-C_{K^{-1}}\Enorm{\mu_1}1+\varepsilon\Enorm{\mu_2}2-\EInnerProd{\mu_2^a}{z_{\mu_2}}2\label{l:3}\\
&\geq \Fen f{\mu_1}-C_{K^{-1}}\Enorm{\mu_1}1+\varepsilon\Enorm{\mu_2}2-C_{Q_1}\left(\varepsilon +C_{K^{-1}}\norm A+\Enorm b2\right)\label{l:4},
\end{align}
where
\begin{itemize}
\item \Cref{l:1} derives from the facts that $\mu_2\in Q$ and $(-x_{\mu_2},-Ax_{\mu_2})\in \cloconv{E^{-1}}$.
\item \Cref{l:2} is a consequence of the Cauchy--Schwarz inequality, of the fact that $\Enorm {x_{\mu_2}}2\leq C_{K^{-1}}$, and of \cref{eq:decomp_qualif2}.
\item \Cref{l:3} derives from the fact that $z_{\mu_2} \in Q_2^\ominus$, and thus $\EInnerProd{\mu_2^b}{z_{\mu_2}}2 \leq 0$ since $\mu_2^b \in Q_2$.
\item \Cref{l:4} is a consequence of the Cauchy--Schwarz inequality, of the fact that \cref{eq:decomp_qualif1} yields $\Enorm{z_{\mu_2}}2\leq \varepsilon+C_{K^{-1}}\norm A+\Enorm b2$, and of the fact that $\Enorm{\mu_2^a}2\leq C_{Q_1}$.
\end{itemize}
Finally, notice that the function $\mu_2\in\E_2\mapsto\varepsilon\Enorm{\mu_2}2$ is coercive and lower bounded, and so is the function $\mu_1\in\E_1\mapsto \Fen f{\mu_1}-C\Enorm{\mu_1}1\in\R$ as a consequence of \Cref{lem:f_star_coercive}. The expected result then derives from \Cref{lem:sum_coer}.
\end{proof}

We next use the inequality \eqref{l:4} from the proof of \Cref{lem:psi_growth} in order to verify \Cref{assum:xi}.

\begin{lemma}
\Cref{assum:xi} is satisfied.
\end{lemma}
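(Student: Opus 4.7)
The plan is to reuse, almost verbatim, the chain of estimates already produced inside the proof of \Cref{lem:psi_growth}. Indeed, the last inequality in that proof (line \labelcref{l:4}) establishes that for every $\mu\in\E$ with $\mu_2\in Q$,
\begin{equation*}
\psi(\mu)+\support{E^{-1}}\mu\geq\Fen f{\mu_1}-C_{K^{-1}}\Enorm{\mu_1}1+\varepsilon\Enorm{\mu_2}2-C,
\end{equation*}
where $C=C_{Q_1}(\varepsilon+C_{K^{-1}}\Enorm{A}{}+\Enorm b2)$. Since $Q^{-1}$ is by definition the sublevel set $\{\psi+\support{E^{-1}}{}\leq \bar h^0\}$, this is exactly the handle we need.

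First, I would verify the inclusion $Q^{-1}\subset\dom\Xi$. If $\mu\in Q^{-1}$, then $\bar h^0\geq\psi(\mu)+\support{E^{-1}}\mu$; since $E^{-1}$ is bounded, $\support{E^{-1}}{}$ is finite everywhere, hence $\psi(\mu)<+\infty$. By the definition~\labelcref{eq:psi-we-use} of $\psi$, this forces simultaneously $\mu_2\in Q$ and $\Fen f{\mu_1}<+\infty$. Since $\dom\Xi=\dom{\Fen f{}}\times\E_2$ (see~\labelcref{eq:def-Xi}), we obtain $\mu\in\dom\Xi$.

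Next, I would extract uniform bounds on $\mu_1$ and $\mu_2$ over $Q^{-1}$. The previous step shows that for every $\mu\in Q^{-1}$ the inequality above applies and gives
\begin{equation*}
\Fen f{\mu_1}-C_{K^{-1}}\Enorm{\mu_1}1+\varepsilon\Enorm{\mu_2}2\leq\bar h^0+C.
\end{equation*}
By \Cref{lem:f_star_coercive}, $\Fen f{}$ is $1/\beta$-strongly convex and in particular has quadratic growth from below, so $\mu_1\mapsto\Fen f{\mu_1}-C_{K^{-1}}\Enorm{\mu_1}1$ is coercive with bounded sublevel sets. This yields a constant $R_1>0$ such that $\Enorm{\mu_1}1\leq R_1$ on $Q^{-1}$. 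Using in addition the lower bound $\Fen f{}\geq -f(0)$ (Fenchel--Young at $x=0$), one then extracts a constant $R_2>0$ such that $\Enorm{\mu_2}2\leq R_2$.

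Finally, plugging the bound on $\Enorm{\mu_1}1$ back into the inequality and dropping the nonnegative term $\varepsilon\Enorm{\mu_2}2$ yields $\Fen f{\mu_1}\leq \bar h^0+C+C_{K^{-1}}R_1$, so that
\begin{equation*}
\Xi(\mu)=\Fen f{\mu_1}+\tfrac12\sqEnorm{\mu_2}2\leq \bar h^0+C+C_{K^{-1}}R_1+\tfrac12 R_2^2
\end{equation*}
uniformly on $Q^{-1}$. The matching lower bound $\Xi\geq-f(0)$ is immediate, and together they provide the desired constant $C_{\Xi,Q^{-1}}$. Nothing in this argument is delicate: the only ``real'' ingredient is the quadratic-minus-linear coercivity of $\Fen f{}-C_{K^{-1}}\Enorm{\cdot}{}$, which is a direct consequence of strong convexity and has already been imported via \Cref{lem:f_star_coercive}.
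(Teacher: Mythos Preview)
Your proof is correct and follows essentially the same approach as the paper: both reuse inequality~\labelcref{l:4} from the proof of \Cref{lem:psi_growth} to bound $\Fen f{\mu_1}-C_{K^{-1}}\Enorm{\mu_1}1+\varepsilon\Enorm{\mu_2}2$ on $Q^{-1}$, and then invoke the strong convexity of $\Fen f{}$ to extract uniform bounds on $\Fen f{\mu_1}$ and $\Enorm{\mu_2}2$. Your write-up is simply more explicit where the paper compresses the last step into one sentence.
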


\begin{proof}
Recall that $Q^{-1} = \{\psi + \support{E^{-1}}{} \leq \bar h^0\}$. Thus, by \eqref{l:4}, we obtain that $\Fen{f}{\mu_1}$ and $\Enorm{\mu_2}{2}$ are finite for every $\mu \in Q^{-1}$, yielding that $Q^{-1} \subset \dom{\Xi}$. In addition, it also follows from \eqref{l:4} that $\mu \mapsto \Fen{f}{\mu_1} - C_{K^{-1}} \Enorm{\mu_1}{1} + \varepsilon \Enorm{\mu_2}2$ is bounded over $Q^{-1}$, and we conclude thanks to the definition of $\Xi$ and the strong convexity of $\Fen{f}{}$. 
\end{proof}

\begin{lemma}
If \Cref{assum:subgrad} is satisfied, then \Cref{assum:w} is satisfied.
\end{lemma}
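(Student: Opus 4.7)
The plan is to interpret the DLS algorithm as an instance of the ELS method applied to \Cref{pb:now_prim} and then invoke \Cref{prop:projection}. By \Cref{rem:subgrad_and_breg}, we have $\subgrad\Xi\mu=\subgrad{\Fen f{}}{\mu_1}\times\{\mu_2\}$, so any element $w^t\in\subgrad\Xi{\mu^t}$ necessarily decomposes as $w^t=(w_1^t,\mu_2^t)$ with $w_1^t\in\subgrad{\Fen f{}}{\mu_1^t}$. Consequently, the two requirements of \Cref{assum:w} decouple: the inclusion $w^t-w^{t+1}\in N_{Q^t}(\mu^{t+1})$ is equivalent to $(w_1^t-w_1^{t+1},\mu_2^t-\mu_2^{t+1})\in N_{Q^t}(\mu^{t+1})$, and the uniform bound on $\Enorm{w^t}{}$ reduces to simultaneous uniform bounds on $\Enorm{w_1^t}{1}$ and $\Enorm{\mu_2^t}{2}$.

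First, I would invoke item (ii) of \Cref{prop:projection}, which, with $w_1^{t+1}$ and $\mu^{t+1}$ defined as in \Cref{alg:dualExtLNN}, yields directly $w_1^{t+1}\in\subgrad{\Fen f{}}{\mu_1^{t+1}}$ together with the normal cone inclusion in the product form above. Setting $w^{t+1}=(w_1^{t+1},\mu_2^{t+1})$ then delivers the required $w^{t+1}\in\subgrad\Xi{\mu^{t+1}}$ satisfying $w^t-w^{t+1}\in N_{Q^t}(\mu^{t+1})$.

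For the uniform bound, the last sentence of \Cref{prop:projection} provides, under \Cref{assum:subgrad}, a constant $C_1$ bounding $\Enorm{w_1^t}{1}$ uniformly in $t$. The bound on $\mu_2^t$ follows from the fact, already established in the proof of \Cref{prop:well_posed_els}, that the iterates satisfy $\mu^t\in Q^{t-1}\subset Q^{-1}\subset B(0,C_{Q^{-1}})$ for $t\geq 1$, whence $\Enorm{\mu_2^t}{2}\leq C_{Q^{-1}}$; the case $t=0$ is covered directly by the initial data. Taking $C_{\subgrad\Xi{}}=\sqrt{C_1^2+C_{Q^{-1}}^2}$ concludes.

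The statement is essentially a bookkeeping exercise: the real content, namely the construction of the subgradient $w^{t+1}$ with the normal cone property and the uniform bound on $w_1^t$, lives in \Cref{prop:projection}. The only care-requiring step is the product decomposition of $w^t$ enabled by \Cref{rem:subgrad_and_breg}, which makes the reduction to \Cref{prop:projection} transparent; the uniform bound on $\mu_2^t$ is then immediate from the boundedness of $Q^{-1}$ inherited from the proof of \Cref{prop:well_posed_els}.
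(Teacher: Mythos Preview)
Your proposal is correct and follows essentially the same approach as the paper: both decompose $w^t=(w_1^t,\mu_2^t)$ via \Cref{rem:subgrad_and_breg}, invoke \Cref{prop:projection} for the subgradient/normal-cone properties and the uniform bound on $w_1^t$, and use $\mu^{t+1}\in Q^t\subset Q^{-1}$ together with the boundedness of $Q^{-1}$ for the bound on $\mu_2^t$. Your version is slightly more explicit (handling $t=0$ separately and giving the constant $\sqrt{C_1^2+C_{Q^{-1}}^2}$), but there is no substantive difference.
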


\begin{proof}
Recall that, for all $\mu\in\E$, $\subgrad\Xi\mu=\subgrad{\Fen f{}}{\mu_1}\times\{\mu_2\}$. Thus, we have $w^t=(w^t_1,\mu_2^t)\in\subgrad\Xi{\mu^t}$, for all $t\in\N$.
The only thing left to show is that there exists a constant $C_{\subgrad\Xi{}}>0$ such that, for all $t\in\N$, we have $\norm{w^t}\leq C_{\subgrad\Xi{}}$. This derives from the fact that $w_1^t$ is uniformly bounded in $t$, as stated in \Cref{prop:projection}.\ref{res:update}.\ref{res:update_w}, and that, for all $t \in \N$, we have $\mu^{t+1} \in Q^t \subset Q^{-1}$ and $Q^{-1}$ is bounded.
\end{proof}

\begin{lemma}\label{lem:H6}
If \Cref{assum:subgrad} is satisfied, then \Cref{assum:lpz-like} is satisfied.
\end{lemma}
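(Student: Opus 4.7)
The plan is to use the decomposition $\psi(\mu) = f^\ast(\mu_1) + \iota_Q(\mu_2) + \langle \mu_2, b\rangle_2$ from \labelcref{eq:psi-we-use} and treat each term separately. First, I would observe that, by construction of \Cref{alg:LNN_appli_formal}, for every $t \in \N$ we have $\mu^t, \mu^{t+1} \in Q^t \subset Q^{-1}$, and in particular $\mu_2^t, \mu_2^{t+1} \in Q$. Therefore the indicator term $\iota_Q(\mu_2)$ vanishes at both iterates and contributes nothing to the difference. The affine term $\mu_2 \mapsto \langle \mu_2, b\rangle_2$ is globally $\Enorm{b}{2}$-Lipschitz, hence contributes at most $\Enorm{b}{2}\,\Enorm{\mu_2^{t+1}-\mu_2^t}{2}$.

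The main work is therefore to bound $|f^\ast(\mu_1^{t+1}) - f^\ast(\mu_1^t)|$ by a constant multiple of $\Enorm{\mu_1^{t+1}-\mu_1^t}{1}$. Here I would exploit the subgradient information already produced by the algorithm: by construction $w_1^t \in \partial f^\ast(\mu_1^t)$ and $w_1^{t+1} \in \partial f^\ast(\mu_1^{t+1})$ (established in \Cref{prop:projection}\,\labelcref{res:update}.\labelcref{res:update_w} and reused in the preceding lemma). The convexity inequality applied in both directions gives
\begin{equation*}
\EInnerProd{w_1^{t+1}}{\mu_1^{t+1}-\mu_1^t}{1} \,\leq\, f^\ast(\mu_1^{t+1}) - f^\ast(\mu_1^t) \,\leq\, \EInnerProd{w_1^t}{\mu_1^{t+1}-\mu_1^t}{1},
\end{equation*}
so by Cauchy--Schwarz
\begin{equation*}
\abs*{f^\ast(\mu_1^{t+1}) - f^\ast(\mu_1^t)} \,\leq\, \max\!\left(\Enorm{w_1^t}{1}, \Enorm{w_1^{t+1}}{1}\right) \Enorm{\mu_1^{t+1}-\mu_1^t}{1}.
\end{equation*}

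To conclude, I would invoke the uniform bound on $(w_1^t)_{t \in \N}$: the statement of \Cref{prop:projection}\,\labelcref{res:update}.\labelcref{res:update_w} gives a uniform bound $\Enorm{w_1^t}{1} \leq C$ whenever \Cref{assum:subgrad} holds, which is exactly the hypothesis of the present lemma. Combining this with the two contributions above, $\psi(\mu^{t+1}) - \psi(\mu^t)$ is bounded in absolute value by $C\,\Enorm{\mu_1^{t+1}-\mu_1^t}{1} + \Enorm{b}{2}\,\Enorm{\mu_2^{t+1}-\mu_2^t}{2}$, which is at most $(C + \Enorm{b}{2})\,\Enorm{\mu^{t+1}-\mu^t}{}$ by the canonical product inner product on $\E = \E_1 \times \E_2$. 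Setting $C_\psi := C + \Enorm{b}{2}$ yields \Cref{assum:lpz-like}. The only delicate point is making sure the subgradient bound from \Cref{prop:projection} is available at both indices $t$ and $t+1$, but since that bound is uniform in the iteration index, no additional argument is needed.
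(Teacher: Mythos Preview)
Your approach is essentially identical to the paper's: decompose $\psi$, observe that the indicator term vanishes at both iterates, bound the linear term by $\Enorm{b}{2}$, sandwich $f^\ast(\mu_1^{t+1})-f^\ast(\mu_1^t)$ using the subgradients $w_1^t,w_1^{t+1}$, and invoke the uniform bound on $(w_1^t)_t$ from \Cref{prop:projection}\,\labelcref{res:update}.\labelcref{res:update_w}. Two harmless slips worth fixing: first, $\mu^t$ lies in $Q^{t-1}$, not $Q^t$ (in fact $\mu^t\notin Q^t$ whenever $\Delta^t>0$), though this still yields $\mu_2^t\in Q$; second, your displayed convexity sandwich has the indices swapped---the subgradient inequalities actually give $\EInnerProd{w_1^t}{\mu_1^{t+1}-\mu_1^t}{1}\le f^\ast(\mu_1^{t+1})-f^\ast(\mu_1^t)\le\EInnerProd{w_1^{t+1}}{\mu_1^{t+1}-\mu_1^t}{1}$---but the absolute-value bound you draw is unaffected.
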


\begin{proof}
We want to show there exists $C_\psi>0$ such that for all $t\in\N$ we have
\begin{equation*}
\abs*{\psi\left(\mu^{t+1}\right)-\psi\left(\mu^t\right)}\leq C\Enorm{\mu^{t+1}-\mu^t}{}.
\end{equation*}
Let $t\in\N$, we have
\begin{equation*}
\abs*{\psi(\mu^{t+1})-\psi(\mu^t)}=\abs*{\Fen f{\mu_1^{t+1}}-\Fen f{\mu_1^t}+\EInnerProd{\mu_2^{t+1}-\mu_2^t}b2}\leq\abs*{\Fen f{\mu_1^{t+1}}-\Fen f{\mu_1^t}}+\abs*{\EInnerProd{\mu_2^{t+1}-\mu_2^t}b2}.
\end{equation*} 
Recall that $w_1^{t+1} \in\subgrad {\Fen f{}}{\mu_1^{t+1}}$, and thus
\begin{equation*}
\Fen f{\mu_1^{t+1}}-\Fen f{\mu_1^t}\leq\EInnerProd{\mu_1^{t+1}-\mu_1^t}{w_1^{t+1}}{}\leq\Enorm{w_1^{t+1}}1\Enorm{\mu_1^{t+1}-\mu_1^t}1.
\end{equation*}
Likewise, we have
\begin{equation*}
\Fen f{\mu_1^t}-\Fen f{\mu_1^{t+1}}\leq \Enorm{w_1^{t}}1\Enorm{\mu_1^t-\mu_1^{t+1}}1.
\end{equation*}
This yields
\begin{equation*}
\abs*{\psi\left(\mu^{t+1}\right)-\psi\left(\mu^t\right)}\leq\left(\max\left\{\Enorm{w_1^{t+1}}1,\Enorm{w_1^t}1\right\}+\Enorm b2\right)\Enorm{\mu^{t+1}-\mu^t}{},
\end{equation*}
and the conclusion follows since $w^t_1$ in uniformly bounded in $t$, as stated in \Cref{prop:projection}.\ref{res:update}.\labelcref{res:update_w}.
\end{proof}

Gathering \Crefrange{lem:H1}{lem:H6} and combining with the discussion of Subsection~\ref{subsec:equivalence}, we obtain at once the following result.

\begin{restatable}[label=th:main]{theorem}{mainth}
Consider the Dualized Level-Set method from \Cref{alg:dualExtLNN} under \Cref{assum:subgrad} and \Crefrange{struct:lpz}{assum:qualif}. Then the primal-dual gap $\Delta^t$ converges to $0$ with a speed of order $1/\sqrt t$. Also, we have, for all $t\in\N$, $\hat x^t_2 = A\hat x^t_1$, and we have the same convergence speed for $f\left(\hat x^t_1\right)+\support Q{A\hat x^t_1-b}-V$ and for $\Fen f{\hat\mu_1}+\indic Q{\hat\mu_2}+\EInnerProd{\hat\mu_2}b2+\support E{\hat\mu}+V$.
\end{restatable}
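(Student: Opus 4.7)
The plan is to invoke the convergence theorem for the ELS method with pruning (\Cref{th:cv_LMP}) applied to \Cref{pb:now_prim}, and then to split the resulting bound on $\Delta^t$ into separate estimates of the primal and dual suboptimality. The bulk of the work has already been done: \Crefrange{lem:H1}{lem:H6} establish that, under the structural and qualification hypotheses \Crefrange{struct:lpz}{assum:qualif} together with \Cref{assum:subgrad}, all assumptions \Crefrange{assum:mu_0}{assum:lpz-like} of \Cref{th:cv_LMP} hold. Moreover, the discussion of Subsection~\ref{subsec:equivalence}, and in particular \Cref{prop:projection}, shows that \Cref{alg:dualExtLNN} is an instance of \Cref{alg:LNN_appli_formal}, which is itself \Cref{alg:ExtLNN_prun} applied to \Cref{pb:now_prim}. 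I would therefore apply \Cref{th:cv_LMP} directly to obtain $\Delta^t = O(1/\sqrt t)$.

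Next, I would record the identity $\hat x^t_2 = A \hat x^t_1$. From the \textbf{Oracle} paragraph of Subsection~\ref{subsec:equivalence} and from \eqref{eq:def_Et}, the set $\tilde E^t$ has the form $\{ x \in \E \mid x_1 \in -\tilde K^t,\, x_2 = A x_1 \}$. Since the primal update in \Cref{alg:LNN_appli_formal} selects $\hat x^t \in -\tilde E^t$, the desired relation follows.

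Finally, I would derive the convergence of the primal and dual suboptimalities by decomposing the gap. By \eqref{eq:errors} in \Cref{prop:well_posed_els}, together with the explicit form of $\psi$ in \eqref{eq:psi-we-use} and the identity $\hat x^t_2 = A \hat x^t_1$, the primal-dual gap writes
\begin{equation*}
\Delta^t = \bigl( f(\hat x^t_1) + \support{Q}{A \hat x^t_1 - b} \bigr) + \bigl( \Fen{f}{\hat\mu_1^t} + \indic{Q}{\hat\mu_2^t} + \EInnerProd{\hat\mu_2^t}{b}{2} + \support{E}{\hat\mu^t} \bigr).
\end{equation*}
Since \Cref{pb:now_prim} has value $-V$ by Fenchel--Rockafellar, adding and subtracting $V$ rewrites this sum as the primal excess $f(\hat x^t_1) + \support{Q}{A \hat x^t_1 - b} - V$ plus the dual excess $\Fen{f}{\hat\mu_1^t} + \indic{Q}{\hat\mu_2^t} + \EInnerProd{\hat\mu_2^t}{b}{2} + \support{E}{\hat\mu^t} + V$. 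Both excesses are nonnegative by \Cref{coro:fenchel_rockafellar}, so each is bounded by $\Delta^t$ and thus inherits the $O(1/\sqrt t)$ rate. No substantive new difficulty arises: the theorem is a synthesis of the preceding verification lemmas and the duality between \Cref{alg:dualExtLNN,alg:LNN_appli_formal} already spelled out in Subsection~\ref{subsec:equivalence}; the only mild care needed is to expand $\psi + \support{E}{}$ in terms of $f^*$, $\iota_Q$, the affine term $\EInnerProd{\cdot}{b}{2}$, and $\sigma_E$ when splitting the gap.
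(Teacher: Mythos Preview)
Your proposal is correct and follows exactly the route the paper takes: the paper's own proof is the single sentence ``Gathering \Crefrange{lem:H1}{lem:H6} and combining with the discussion of Subsection~\ref{subsec:equivalence}, we obtain at once the following result,'' and you have simply unpacked what that sentence means, including the splitting of $\Delta^t$ into two nonnegative suboptimalities and the structural identity $\hat x^t_2 = A\hat x^t_1$ coming from the form of $\tilde E^t$. There is nothing to add or correct.
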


\begin{remark}\label{rem:new_oracle}
\begin{itemize}[leftmargin=*]
\item We have kept \Cref{assum:subgrad} in its non-explicit formulation on purpose. It is of course satisfied if $K$ is bounded, since then $K$ is weakly compact and thus \Cref{pb:oracle_dls} has a solution for any $\mu_1$.
\item In the general context of the ELS algorithm, it is actually sufficient to require the existence of a constant $C_{\oracle}>0$ such that for any $\mu \in Q^{-1}$, the linear minimization oracle has a solution in $B(0,C_{\oracle})$.
In the more specific context of the DLS algorithm, we have $Q^{-1} \subset \dom{f^*} \times Q$. Moreover, $Q^{-1}$ is a bounded set. Therefore, \Cref{assum:subgrad,assum:oracle} can be replaced by the following one:
\begin{enumerate}[label=\upshape(A\arabic*), ref = \upshape A\arabic*, start=0]
\item\label[assum]{assum:new_oracle} For any $R>0$, there exists $C_{\oracle}$ such that for any $(\mu_1',\mu_2') \in (\dom{f^*} \times Q) \cap B(0,R)$, \Cref{pb:oracle_dls} has a solution $v\in B(0,C_{\oracle})$, when called with $\mu_1= \mu_1'+ A^*\mu_2'$.
\end{enumerate}
\end{itemize}
\end{remark}

\subsection{Extension of the Generalized Conditional Gradient}

A now classical extension of the Frank--Wolfe algorithm, called generalized Frank--Wolfe (or generalized conditional gradient method, see \cite{kunisch2022fast}), consists in linearizing only partially the cost function. The contribution of the cost function which is not linearized remains then in the oracle and replaces the characteristic function of the feasible set.

In this subsection, we show that our algorithm can handle a situation of this kind, thanks to a natural augmentation of the problem through a slack variable.
Let $\E_{1a}$ and $\E_2$ be Hilbert spaces. We aim at solving the following generalization of \Cref{pb:dep_DLS}:
\begin{problem}{}{pb:dep_ECGC}
\minimize{x_{1a}\in\E_{1a}}f_a\left(x_{1a}\right)+\support Q{A_ax_{1a}-b}+h\left(x_{1a}\right).
\end{problem}

\paragraph{Structural assumptions}
We make the same assumptions on $f_a$, $Q$, and $A_a$ as we made on $f$, $Q$, and $A$ in \Cref{subsec:framework_DLS}, and we assume that $h\in\plscc{\E_{1a}}$ and is supercoercive, i.e., $\lim_{\norm{x} \to +\infty} h(x)/ \norm{x} = + \infty$.

\paragraph{Qualification condition} We assume that we can find $K_a^{-1}\subset\dom h$ compact such that $h$ is bounded over $\cloconv{K_a^{-1}}$ and $-b\in\inter{Q_2^\ominus-A_a\,\cloconv{K_a^{-1}}}$.

\paragraph{Oracle} We assume that we have the following oracle: for all $\mu_{1a}\in\E_{1a}$, we can find a solution to the problem
\begin{equation*}
\minimize{v_{1a}\in\E_{1a}}\EInnerProd{\mu_{1a}}{v_{1a}}{1a}+h\left(v_{1a}\right).
\end{equation*}

\begin{remark}
This is indeed a more general framework, since we can take $h=\indic {K_a}{}$, in which case we make exactly the same \textbf{Structural assumptions} and make the same \textbf{Qualification condition} as in \Cref{subsec:framework_DLS}, and the \textbf{Oracle} is a LMO on $K_a$.
\end{remark}

Under these assumptions, we can rewrite our problem in the framework of \Cref{subsec:framework_DLS}. For this, we take $\E_1=\E_{1a}\times\R$, the function $f$ defined as, for all $x_1=(x_{1a},x_{1b})\in\E_1$
\begin{equation*}
f(x_1)=f_a(x_{1a})+x_{1b},
\end{equation*}
which is indeed convex with gradient $\beta$-Lipschitz continuous, the bounded linear operator $A$ defined as, for all $x_1\in\E_1$
\begin{equation*}
A\left(x_{1a},x_{1b}\right)=A_ax_{1a},
\end{equation*}
and the closed convex set $K=\epi h$.
In this context, $\Fen f{}$ is given by, for all $\mu_1=(\mu_{1a},\mu_{1b})\in\E_1$
\begin{equation*}
\Fen f{\mu_{1a},\mu_{1b}}=\Fen{f_a}{\mu_{1a}}+\indic{\{1\}}{\mu_{1b}}
\end{equation*}
and the adjoint operator of $A$ is the operator $A^*$ given, for all $\mu_2\in\E_2$, by $A^*\mu_2 = \left(A_a^*\,\mu_2,0\right)$.

Also, this new problem verifies the \textbf{Qualification condition} introduced in \Cref{subsec:framework_DLS}, with $K^{-1}= K_a^{-1} \times \{ M \}$, where $M$ denotes an upper bound of $h$ over $K_a^{-1}$. Clearly $K^{-1} \subset K$ is compact. Since $A\,\cloconv{K^{-1}}=A_a\,\cloconv{K_a^{-1}}$, we have $-b\in\inter{Q_2^\ominus-A\,\cloconv{K^{-1}}}$
and the problem is indeed qualified.

We now need to verify that we indeed have an \textbf{Oracle} for the rewritten problem. As was explained in \Cref{rem:new_oracle}, it is sufficient to verify \Cref{assum:new_oracle}. We fix an arbitrary constant $R>0$ and take $(\mu_1',\mu_2') \in (\dom{f^*} \times Q) \cap B(0,R)$. Therefore $\mu_1'= (\mu_{1a}',1)$, with $\mu_{1a} \in \dom{\Fen{f_a}{}}$. Let $\mu_{1a}= \mu_{1a}'+ A_a^* \mu_2'$.
We have
\begin{align*}
\bar v_1\in\argmin{v_1\in K}\EInnerProd{\mu_1'+A^*\mu_2'}{v_1}1
&\Leftrightarrow \bar v_1\in\argmin{(v_{1a},v_{1b})\in\epi h}\EInnerProd{\mu_{1a}'+A_a^*\,\mu_2'} {v_{1a}}{1a}+v_{1b}\\
&\Leftrightarrow \bar v_{1b}=h\left(\bar v_{1a}\right) \text{ and }\bar v_{1a}\in\argmin{v_{1a}\in\E_{1a}}\EInnerProd{\mu_{1a}} {v_{1a}}{1a}+h\left(v_{1a}\right).
\end{align*}
Then $\bar v_{1a}$ is given by our \textbf{Oracle}.
As a direct consequence of \Cref{lem:FenchelYoungEq} the above statements are equivalent to: $\bar{v}_{1a} \in \partial h^*(-\mu_{1a})$
and $\bar{v}_{1b}= \EInnerProd{-\mu_{1a}} {\bar{v}_{1a}}{1a} -h^*(-\mu_{1a})$. Since we have a bound on $(\mu_1',\mu_2')$, we also have one on $-\mu_{1a}$. Applying \cite[Propositions 14.15(ii) and 16.17(iii)]{bauschke2011convex}, we deduce that $\abs{h^*(-\mu_{1a})}$ and $\Enorm{\bar{v}_{1a}}{1a}$ are bounded by some constant independent of $(\mu_1',\mu_2')$, depending only on $R$. Then $\abs{\bar{v}_{1b}}$ is bounded (in the same sense). This concludes the verification of \Cref{assum:new_oracle}.

\section{Technical proofs}\label{sec:proofs}

In this section, we provide the proofs of \Cref{it:esp} and \Cref{prop:projection}. Both these results concern the dualization of the projection problem with respect to the Bregman distance associated with the function $\Xi$ from \eqref{eq:def-Xi}, \Cref{pb:proj}. Recall that, for $t \in \N$, \Cref{pb:proj} is
\begin{problem}{\labelcref*{pb:proj}}{pb:proj_rec}
\minimize{\mu\in Q^t}\fstarBreg{\mu_1}{\mu_1^t}{w_1^t}+\frac12\sqEnorm{\mu_2-\mu_2^t}2,
\end{problem}
and we denote here its value by $V^t$.

We want to write a problem equivalent to \Cref{pb:proj} which fits in the framework of the Fenchel--Rockafellar duality, i.e., which is of the form
\begin{problem}{}{pb:gen_prim}
\minimize{X \in \X_1} F(X)+G(LX)
\end{problem}where $L\colon\X_1\to\X_2$ is a bounded linear operator, $\X_1$ and $\X_2$ are Banach spaces, $F\in\plscc{\X_1}$, and $G\in\plscc{\X_2}$.
\smallskip

We recall that $B_{\Fen f{}}$ is defined, for all $\mu_1\in\E_1$, $\mu_1'\in\dom{\subgrad {\Fen f{}}{}}$, and $w_1\in\subgrad{\Fen f{}}{\mu_1'}$ as
\begin{equation*}
\fstarBreg{\mu_1}{\mu_1'}{w_1}=\Fen f{\mu_1}-\Fen f{\mu_1'}-\EInnerProd {\mu_1-\mu_1'}{w_1}1
\end{equation*}
and that, for all $\mu\in\E$, we have
\begin{equation*}
\mu\in Q^t\Leftrightarrow
\left\{
\begin{aligned}
& \mu_2\in Q,\\
& \forall v\in E^t, \Fen f{\mu_1}+\EInnerProd{\mu_2}b2+\EInnerProd\mu v{}-\ell^t\leq0.
\end{aligned}
\right.
\end{equation*}
Thus, \Cref{pb:proj} is equivalent to
\begin{alproblem}{}{pb:int_proj}
&\minimize{\mu\in\E_1\times Q}\Fen f{\mu_1}-\EInnerProd{\mu_1}{w_1^t}1+\frac12\sqEnorm{\mu_2}2-\EInnerProd{\mu_2}{\mu_2^t}2\\
&\text{s.t. }\forall v\in E^t, \Fen f{\mu_1}+\EInnerProd{\mu_2}b2+\EInnerProd\mu v{}-\ell^t\leq 0,\notag
\end{alproblem}
in the sense that 
the value of \Cref{pb:int_proj} is $V^t-\Fen f{\mu^t}+\EInnerProd{\mu_1^t}{w_1^t}1-\frac12\sqEnorm{\mu_2^t}2$ and that \Cref{pb:proj,pb:int_proj} have the same solution.
Since $v\in E^t$ \textit{iff} $v_1\in-K^t$ and $v_2=Av_1$, this last problem is itself equivalent to
\begin{alproblem}{}{pb:now_proj}
&\minimize{(\mu,z)\in\left(\E_1\times Q\right)\times\R} \ z-\EInnerProd{\mu_1}{w_1^t}1+\frac12\sqEnorm{\mu_2}2-\EInnerProd{\mu_2}{\mu_2^t}2+\indic{\epi{\Fen f{}}}{\mu_1,z}\\
&\text{s.t. }\forall v_1\in -K^t, z+\EInnerProd{\mu_2}b2+\EInnerProd{\mu_1+A^*\mu_2}{v_1}1-\ell^t\leq 0,\notag
\end{alproblem}
in the sense that they have same value, and that $(\mu,z)$ is the solution to \Cref{pb:now_proj} \textit{iff} $z=\Fen f{\mu_1}$ and $\mu$ is the solution to \Cref{pb:int_proj}.

Let us now show that \Cref{pb:now_proj} has the expected shape. 
Let $L^t\colon\E\times\R\to\descrC {}{K^t}{}{}$ be the bounded linear operator defined, for $(\mu, z) \in \E \times \R$, by
\begin{equation*}
L^t(\mu,z) = z+\EInnerProd{\mu_2}b2-\EInnerProd{\mu_1+A^*\mu_2}{}1.
\end{equation*}
For any $(\mu,z)$, $L^t(\mu,z)$ is indeed a continuous and bounded function, since it is affine and defined on a bounded set. Clearly $L^t$ is a linear operator, it is easy to verify that it is bounded. Next, given $(\mu,z)\in (\E_1 \times Q)\times\R$, we see that the constraint in \Cref{pb:now_proj} is satisfied \emph{iff}
$L^t(\mu,z)-\ell^t\in\descrC {}{K^t}{\R_-}{}$.
From this last point, we define $G^t\colon\descrC{}{K^t}{}{}\to\bar\R_+$ as, for all $\phi\in\descrC{}{K^t}{}{}$,
\begin{equation*}
G^t(\phi)=\indic{\descrC{}{K^t}{\R_-}{}}{\phi-\ell^t}
\end{equation*}
and $F^t\colon\E\times\R\to\bar\R$ as, for all $\mu\in\E$ and $z\in\R$,
\begin{equation}
\label{eq:Ft}
F^t(\mu,z) = z-\EInnerProd{\mu_1}{w_1^t}1+\indic{\epi{\Fen f{}}}{\mu_1,z}+\frac12\sqEnorm{\mu_2}2-\EInnerProd{\mu_2}{\mu_2^t}2+\indic Q{\mu_2},
\end{equation}
and we notice that $F^t\in\plscc{\E\times\R}$ and $G^t\in\plscc{\descrC{}{K^t}{}{}}$. Finally, \Cref{pb:now_proj} reads
\begin{problem}{}{pb:trueproj}
\minimize{(\mu,z)\in\E\times\R}F^t(\mu,z)+G^t(L^t(\mu,z)),
\end{problem}
which is under the form \eqref{pb:gen_prim}, as required. The dual problem to \Cref{pb:trueproj} is then
\begin{problem}{}{pb:dualproj}
\minimize{m\in\descrC{}{K^t}{}{*}}\Fen{F^t}{{L^t}^*m}+\Fen {G^t}{-m}
\end{problem}
where ${L^t}^*\colon\descrC{}{K^t}{}{*}\to\E\times\R$ is the dual operator of $L^t$.

The aim of what follows is to provide a more explicit expression of \Cref{pb:dualproj}. We start by identifying, in the following lemma, the set $\descrC{}{K^t}{}{*}$ over which we minimize as a set of measures. Such an identification is immediate since $K^t$ is compact.

\begin{lemma}[label=it:MEt]
We have $\descrC{}{K^t}{}{}=\descrC b{K^t}{}{}$ and $\descrC{}{K^t}{}{*}=\descrM{}{K^t}$. Moreover, $\descrM{}{K^t}$ is endowed with the total variation norm $\norm m = \abs m(K^t).$
\end{lemma}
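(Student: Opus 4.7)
The plan is to reduce this statement to two classical facts: the equality of continuous and bounded continuous functions on a compact set, and the Riesz--Markov--Kakutani representation theorem.

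First I would verify that $K^t$ is compact. From the definition given just before the pruning step, $K^t$ is a simple set, meaning $K^t = K^{-1} \cup S$ for some finite set $S$. By \Cref{assum:qualif}, $K^{-1}$ is compact, hence $K^t$ is the union of a compact set and a finite (hence compact) set, so it is compact.

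Once compactness of $K^t$ is established, the first equality $\descrC{}{K^t}{}{}=\descrC b{K^t}{}{}$ is immediate: any continuous function on a compact metric (or, more generally, Hausdorff) space attains its supremum and infimum, hence is bounded. The space $\descrC{}{K^t}{}{}$ endowed with the sup-norm is then a Banach space.

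For the identification $\descrC{}{K^t}{}{*}=\descrM{}{K^t}$, the plan is to invoke the Riesz--Markov--Kakutani representation theorem: for any compact Hausdorff space $K$, the topological dual of $(\descrC{}{K}{}{},\norm{\cdot}_{\infty})$ is isometrically isomorphic to the space of signed Radon measures $\descrM{}{K}$ endowed with the total variation norm $\norm{m}=\abs{m}(K)$, the duality pairing being $\InnerProd{m}{\phi}=\int_{K}\phi\,dm$. There is no real obstacle here, as the statement is a direct quotation of a standard theorem; the only thing to check is that our conventions match (signed Radon measures, total variation norm, sup-norm on continuous functions), which they do by the definitions recalled in \Cref{sec:prelim}.
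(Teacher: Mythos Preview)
Your proposal is correct and matches the paper's approach: the paper itself gives no detailed proof, merely remarking that ``such an identification is immediate since $K^t$ is compact,'' and your argument spells out exactly those details (compactness of $K^t$ as a simple set, boundedness of continuous functions on compacta, and Riesz--Markov--Kakutani).
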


Let us now prove \Cref{it:esp}.

\begin{proof}[Proof of \Cref{it:esp}]
Let $S$ be a compact subset of $\E$. Let us first show that there exists a bounded linear operator $\mathfrak E\colon\descrM{}S\to\E$ such that, for all $m\in\descrM{}S$, $\mathfrak Em$ is the unique vector verifying
\begin{equation*}
\forall\mu\in\E,\,\EInnerProd\mu{\mathfrak Em}{}=\int_S\EInnerProd\mu v{}dm(v).
\end{equation*}
First, we show that for all $m\in\descrM{}S$, there exists a unique vector $v_m$ such that, for all $\mu\in\E$, we have 
\begin{equation*}
\EInnerProd\mu{v_m}{}=\int_S\EInnerProd\mu v{}\,dm(v).
\end{equation*}
Since $S$ is compact, there exists $C_{S}\in\R$ such that $S\subset B(0,C_S)$. Let $m\in\descrM{}S$. We define $I_m\colon\E\to\bar\R$ as, for all $\mu\in\E$,
\begin{equation*}
I_m(\mu)=\int_S\EInnerProd\mu v{}\,dm(v).
\end{equation*}
Since for all $\mu\in\E$, $\abs*{I_m(\mu)}\leq C_S\norm m\Enorm\mu{}$ and for all $v\in\E$, $\mu\mapsto\EInnerProd\mu v{}$ is linear, then $I_m$ is linear and continuous. Thus, using Riesz's representation theorem, there exists a unique $v_m\in\E$ such that for all $\mu\in\E$, $I_m(\mu)=\EInnerProd\mu{v_m}{}.$ Furthermore, $m\in\descrM{}S\mapsto v_m$ is also linear. Let then $\mathfrak E\colon\descrM{}S\to\E$ be defined as, for all $m\in\descrM{}S$, $\mathfrak Em=v_m.$ Then $\Enorm{\mathfrak Em}{}\leq C_S\norm m$ and thus $\mathfrak E$ is continuous. 

It remains to show that, for a given $m\in\prob{S}$, we have $\mathfrak Em\in \cloconv S$.
It suffices for this to show that $\indic{\cloconv S}{v_m}\leq 0$. We have
\begin{equation*}
\indic{\cloconv S}{v_m}=\Fen{\support S{}}{v_m}=\sup_{\mu\in\E}\,\EInnerProd\mu{v_m}{}-\support S\mu.
\end{equation*}
For $\mu\in\E$, we have
\begin{equation*}
\EInnerProd\mu{v_m}{}-\support S\mu=\int_S\left(\EInnerProd\mu v{}-\support S\mu\right)dm(v)
\end{equation*}
by definition of $v_m$ and using the linearity of the integral and the fact that $m(S)=1$. Moreover, for all $v\in S$, we have
\begin{equation*}
\EInnerProd\mu v{}-\support S\mu\leq \indic Sv =0
\end{equation*}
using inequality \eqref{eq:fenchel_young}. Thus, by positivity of the integral, we have
$\EInnerProd\mu{v_m}{}-\support S\mu\leq 0$.
Since this holds for any $\mu\in\E$, we have $\indic{\cloconv{S}}{v_m}\leq0$, which is the required result.
\end{proof}

We next turn to the question of providing explicit expressions for the functions $F^{t^*}$ and $G^{t^*}$ and the linear operator $L^{t^*}$ appearing in \Cref{pb:dualproj}.

\begin{lemma}[label=lem:3.7] The functions $F^{t^*}$ and $G^{t^*}$ and the linear operator $L^{t^*}$ are as follows:
\begin{enumerate}[label=\itshape\roman*\,\upshape)]
\item\label{it:Ft_star} The function $\Fen{F^t}{}\colon\E\times\R\to\bar\R^+$ is given, for all $(x,s)\in\E\times\R$, by
\begin{equation*}
\Fen{F^t}{x,s}=\tilde f(x_1+w_1^t,1-s)+\frac12\left(\sqEnorm{x_2+\mu_2^t}2-\sqdist{x_2+\mu_2^t}Q\right).
\end{equation*}
\item\label{it:Gt_star} The function $\Fen{G^t}{}\colon\descrM{}{K^t}\to\bar\R^+$ is given, for all $m\in\descrM{}{K^t}$, by
\begin{equation*}
\Fen{G^t}m=\ell^tm(K^t)+\indic{\descrM+{K^t}}m.
\end{equation*}
\item\label{it:Lt_star} The operator ${L^t}^*$ is given, for all $m\in\descrM{}{K^t}$, by
\begin{equation*}
{L^t}^*m=\left(\left(-\mathfrak Em, - A\mathfrak Em + m(K^t) b\right), m(K^t)\right)
\end{equation*}
\end{enumerate}
\end{lemma}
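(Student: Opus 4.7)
The plan is to tackle the three parts separately, exploiting the separable structure in part (i), a direct support function computation in part (ii), and the definition of $\mathfrak{E}$ in part (iii).

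For part (i), I would first observe that $F^t$ splits as a sum: $F^t(\mu, z) = F^t_1(\mu_1, z) + F^t_2(\mu_2)$, where $F^t_1(\mu_1, z) = z - \EInnerProd{\mu_1}{w^t_1}{1} + \indic{\epi{\Fen{f}{}}}{\mu_1,z}$ and $F^t_2(\mu_2) = \tfrac12\sqEnorm{\mu_2}{2} - \EInnerProd{\mu_2}{\mu_2^t}{2} + \indic{Q}{\mu_2}$. Thanks to this separability, the conjugate is the sum of the individual conjugates. To compute $F_1^{t*}$, I would notice that $F^t_1$ equals $\indic{\epi{\Fen{f}{}}}{\mu_1,z}$ plus the linear form $\EInnerProd{(-w_1^t,1)}{(\mu_1,z)}{}$, so the standard identity $(g + \EInnerProd{\cdot}{a}{})^*(y) = g^*(y-a)$ yields $F_1^{t*}(x_1,s) = \indic{\epi{\Fen{f}{}}}{}^*(x_1 + w_1^t, s - 1)$. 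Then I would invoke \Cref{lem:tildef}, which states $(\tilde f)^*(\mu,z) = \indic{\epi{\Fen{f}{}}}{\mu,-z}$; taking biconjugates in the right variable shows $\indic{\epi{\Fen{f}{}}}{}^*(\xi,\tau) = \tilde f(\xi, -\tau)$, delivering the perspective term $\tilde f(x_1 + w_1^t, 1-s)$. For $F_2^{t*}$, I would complete the square: for $x_2 \in \E_2$,
\begin{equation*}
F_2^{t*}(x_2) = \sup_{\mu_2 \in Q}\, \EInnerProd{\mu_2}{x_2 + \mu_2^t}{2} - \tfrac12\sqEnorm{\mu_2}{2} = \tfrac12\sqEnorm{x_2 + \mu_2^t}{2} - \tfrac12 \inf_{\mu_2 \in Q} \sqEnorm{\mu_2 - (x_2+\mu_2^t)}{2},
\end{equation*}
and the infimum is precisely $\sqdist{x_2 + \mu_2^t}{Q}$, yielding the announced formula.

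For part (ii), I would unfold the definition of $G^{t*}(m)$ and use the change of variables $\psi = \phi - \ell^t$ to obtain
\begin{equation*}
G^{t*}(m) = \sup_{\phi \in \descrC{}{K^t}{}{}} \int_{K^t} \phi \, dm - \indic{\descrC{}{K^t}{\R_-}{}}{\phi - \ell^t} = \ell^t m(K^t) + \sup_{\psi \in \descrC{}{K^t}{\R_-}{}} \int_{K^t} \psi \, dm.
\end{equation*}
It then suffices to show that the last supremum equals $0$ when $m \in \descrM{+}{K^t}$ and $+\infty$ otherwise. The first case is immediate by taking $\psi = 0$. For the second, I would use the Jordan decomposition $m = m_+ - m_-$ with $m_- \neq 0$: since $m_+$ and $m_-$ are mutually singular Radon measures on a compact metric space, Urysohn's lemma produces a nonnegative continuous function $\varphi$ with $\int \varphi \, dm < 0$, and scaling $\psi = -t\varphi$ with $t \to +\infty$ drives the integral to $+\infty$.

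For part (iii), I would apply the definition of the adjoint and of $\mathfrak E$ from \Cref{it:esp}. For $m \in \descrM{}{K^t}$ and $(\mu,z) \in \E \times \R$,
\begin{equation*}
\EInnerProd{L^t(\mu,z)}{m}{} = \int_{K^t} \bigl[z + \EInnerProd{\mu_2}{b}{2} - \EInnerProd{\mu_1 + A^*\mu_2}{v_1}{1}\bigr] dm(v_1) = z\, m(K^t) + m(K^t) \EInnerProd{\mu_2}{b}{2} - \EInnerProd{\mu_1}{\mathfrak E m}{1} - \EInnerProd{\mu_2}{A \mathfrak E m}{2},
\end{equation*}
which after regrouping reads $\EInnerProd{\mu_1}{-\mathfrak E m}{1} + \EInnerProd{\mu_2}{-A\mathfrak E m + m(K^t) b}{2} + z\, m(K^t)$, giving the claimed expression for ${L^t}^* m$.

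The only real obstacle is in part (ii), where I must rule out negative measures in $\Fen{G^t}{}$'s domain; the rest is bookkeeping built on \Cref{lem:tildef} and \Cref{it:esp}.
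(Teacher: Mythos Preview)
Your proposal is correct and follows essentially the same route as the paper: the same separable split in part~(i) with \Cref{lem:tildef} for the $(\mu_1,z)$-block and a completion of squares (the paper cites \cite[Example~13.5]{bauschke2011convex} instead) for the $\mu_2$-block, the same change of variable in part~(ii), and the same pairing computation via \Cref{it:esp} in part~(iii). The only difference is that you spell out the Jordan--Urysohn argument in part~(ii), whereas the paper simply asserts $\sup_{\phi\leq 0}\int\phi\,dm=\indic{\descrM+{K^t}}{m}$ as a standard fact.
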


\begin{proof}
\begin{enumerate}[label=\itshape\roman*\,\upshape), leftmargin=0pt, itemindent={\parindent+\widthof{\itshape iii\,\upshape) }}]
\item Let $x\in\E$ and $s\in\R$. We have
\begin{align*}
\Fen{F^t}{x,s}&=\sup_{(\mu,z)\in\E\times\R}\InnerProd{(\mu,z)}{(x,s)}-F^t(\mu,s)\\
&= \sup_{(\mu_1,z)\in\epi{\Fen f{}}}\EInnerProd{\mu_1}{x_1+w_1^t}1+z(s-1)+\sup_{\mu_2\in Q}\EInnerProd{\mu_2}{x_2+\mu_2^t}2-\frac12\sqEnorm{\mu_2}2.
\end{align*}
By \Cref{lem:tildef}, it holds that
\begin{equation*}
\sup_{(\mu_1,z)\in\epi{\Fen f{}}}\EInnerProd{\mu_1}{x_1+w_1^t}1+z(s-1)=\tilde f\left(x_1+w_1^t,1-s\right).
\end{equation*}
Then, using \cite[Example~13.5]{bauschke2011convex}, we have
\begin{equation*}
\sup_{\mu_2\in Q}\EInnerProd{\mu_2}{x_2+\mu_2^t}2-\frac12\sqEnorm{\mu_2}2=\frac12\left(\sqEnorm{x_2+\mu_2^t}2-\sqdist{x_2+\mu_2^t}Q\right).
\end{equation*} 

\item Let $m\in\descrM{}{K^t}$. We have
\begin{align*}
\Fen{G^t}m&
=\sup_{\phi\in\descrC{}{K^t}{}{}}\int_{K^t}\phi\,dm-\indic{\descrC{}{K^t}{\R_-}{}}{\phi-\ell^t}
=\sup_{\phi\in\descrC{}{K^t}{\R_-}{}}\int_{K^t}(\phi+\ell^t)\,dm\\
&=\ell^tm(K^t)+\sup_{\phi\in\descrC{}{K^t}{\R_-}{}}\int_{K^t}\phi\,dm
\ = \ \ell^tm(K^t)+\indic{\descrM+{K^t}}{m}.
\end{align*}

\item The operator ${L^t}^*$ is characterized by the relation
\begin{equation*}
\forall \mu\in\E,\forall z\in\R,\forall m\in\descrM{}{K^t},\,\int_{K^t}L^t(\mu,z)(v)\,dm(v)=\InnerProd{(\mu,z)}{{L^t}^*m}.
\end{equation*}
Using \Cref{it:esp}, we have, for all $\mu\in\E$, $z\in\R$, and $m\in\descrM{}{K^t}$
\begin{align*}
\int_{K^t}L^t(\mu,z)(v)\,dm(v)&=z\,m(K^t)+\EInnerProd{\mu_2}{m(K^t)b}2 - \int_{K^t}\EInnerProd{\mu_1 + A^\ast \mu_2}{v}{1}\,dm(v)\\
&=\InnerProd{(\mu_1,\mu_2,z)}{(-\mathfrak Em, - A \mathfrak Em + m(K^t)b,m(K^t))}.
\end{align*}
\end{enumerate}
\noindent This concludes the proof.
\end{proof}

Now that we have computed $F^{t^*}$ and $G^{t^*}$, we compute in the next lemma their subgradients.

\begin{lemma}[label=lem:subgrad_FtGtstar]
\begin{enumerate}[label=\itshape\roman*\,\upshape), widest*=2, leftmargin=*]
\item The function $\Fen{F^t}{}$ is Fréchet-differentiable over $\E\times\R_-$ with continuous gradient, and we have, for all $(x,s)\in\E\times\R_-$,
\begin{equation*}
\grad{\Fen{F^t}{}}{x,s} = \Big( \grad{f}{y_1},\,\proj{x_2+\mu_2^t}Q,\,-f\left( y_1 \right)+\EInnerProd{\grad f{y_1}}{y_1}1\Big),
\quad
\text{where } y_1= \frac{x_1+w_1^t}{1-s}.
\end{equation*}
\item For all $m\in\descrM+{K^t}$
\begin{equation*}
\subgrad{\Fen{G^t}{}}{m}=\ell^t\mathds 1_{K^t}+\left\{\phi\in\descrC{}{K^t}{\R_-}{}\,\big|\,\int_{K^t} \phi\,dm=0\right\},
\end{equation*}
where $\mathds 1_{K^t} \in \mathcal C(K^t)$ is the function constantly equal to $1$ in $K^t$, and, for all $m\in\descrM{}{K^t}\setminus\descrM+{K^t}$, $\subgrad{\Fen{G^t}{}}{m}=\varnothing.$
\end{enumerate}
\end{lemma}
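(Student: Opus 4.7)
For part (i), the plan is to differentiate the two summands of $\Fen{F^t}{}$ separately and glue them via the chain rule. The first summand, $(x_1,s)\mapsto \tilde f(x_1+w_1^t,1-s)$, can be differentiated through \Cref{lem:tildef}: for $(x,s)\in \E\times \R_{-}$ we have $1-s\geq 1>0$, so $\tilde f$ is Fréchet-differentiable at $(x_1+w_1^t,1-s)$ (note that $\grad f{}$ is continuous thanks to \Cref{struct:lpz}), and the gradient is given by \labelcref{eq:tildef} with $y_1=(x_1+w_1^t)/(1-s)$. Composing with the affine map $(x_1,s)\mapsto(x_1+w_1^t,1-s)$ yields a Fréchet derivative whose $x_1$-component is $\grad f{y_1}$ and whose $s$-component is $-f(y_1)+\InnerProd{\grad f{y_1}}{y_1}$. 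The second summand is $\frac12(\sqEnorm{x_2+\mu_2^t}2 - \sqdist{x_2+\mu_2^t}Q)$. This is (up to a translation) the difference of $\frac12\sqEnorm{\cdot}2$ and the Moreau envelope of $\indic Q{}$; by \cite[Example~13.5 and Proposition~12.30]{bauschke2011convex}, it is Fréchet-differentiable with gradient $\proj{x_2+\mu_2^t}Q$. Continuity of the full gradient follows from continuity of $\grad f{}$ and of the projector onto the closed convex set $Q$, and assembling the three components gives exactly the claimed formula.

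For part (ii), I would argue directly from the definition of $\subgrad{\Fen{G^t}{}}m$. If $m\notin \descrM+{K^t}$ then $\Fen{G^t}m=+\infty$, so $\subgrad{\Fen{G^t}{}}m=\varnothing$ by convention. So fix $m\in \descrM+{K^t}$ and $\phi\in\descrC{}{K^t}{}{}$; writing $\phi=\ell^t\mathds 1_{K^t}+\psi$ with $\psi\in\descrC{}{K^t}{}{}$, the subgradient inequality
\[
\Fen{G^t}{m'}\geq \Fen{G^t}m+\int_{K^t}\phi\,d(m'-m),\quad \forall m'\in\descrM{}{K^t},
\]
reduces, after cancelling $\ell^t$-terms, to $\int_{K^t}\psi\,dm\geq \int_{K^t}\psi\,dm'$ for every $m'\in\descrM+{K^t}$ (the case $m'\notin\descrM+{K^t}$ being automatic). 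The plan is then to extract the two defining conditions $\psi\leq 0$ on $K^t$ and $\int \psi\,dm=0$ by testing:
\begin{itemize}
\item with $m'=m+\varepsilon\delta_v$ for $v\in K^t$ and $\varepsilon>0$, which forces $\psi(v)\leq 0$;
\item with $m'=\lambda m$ for $\lambda\geq 0$, which forces $\int \psi\,dm\geq (\lambda-1)\int\psi\,dm$ for all such $\lambda$, hence $\int\psi\,dm=0$ (combining $\lambda=0$ and $\lambda\to\infty$).
\end{itemize}
Conversely, any $\psi\in\descrC{}{K^t}{\R_{-}}{}$ with $\int \psi\,dm=0$ satisfies the inequality, since then $\int \psi\,dm'\leq 0=\int\psi\,dm$ for any $m'\in\descrM+{K^t}$. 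This gives exactly the claimed description.

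The routine parts are the two differentiation computations in (i); the only place that needs some care is the empty-subdifferential claim and the $\lambda\to\infty$ test in (ii), to ensure that the characterization is both necessary and sufficient. No substantive obstacle is expected, as both parts follow from the explicit formulas for $\Fen{F^t}{}$ and $\Fen{G^t}{}$ established in \Cref{lem:3.7} combined with standard convex-analytic facts.
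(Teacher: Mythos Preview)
Your proposal is correct and follows essentially the same route as the paper. For part (i) both you and the paper differentiate the two summands of $\Fen{F^t}{}$ separately, invoking \Cref{lem:tildef} for the perspective term (using $1-s>0$ when $s\leq 0$) and the gradient formula $\grad{\sqdist{}Q}{}=2(\operatorname{Id}-\proj{}Q)$ for the Moreau-envelope term. For part (ii) the paper splits $\Fen{G^t}{}$ as the sum of the linear map $m\mapsto\ell^t m(K^t)$ and $\indic{\descrM+{K^t}}{}$ and then computes the normal cone by testing with $\bar m=m+\delta_{\bar v_1}$ and $\bar m=\tfrac12 m$; your substitution $\phi=\ell^t\mathds 1_{K^t}+\psi$ performs the same decomposition implicitly, and your test measures $m'=m+\varepsilon\delta_v$ and $m'=\lambda m$ are the same in spirit (note a small slip: the reduced inequality is $\int\psi\,dm\geq \lambda\int\psi\,dm$, not $\int\psi\,dm\geq(\lambda-1)\int\psi\,dm$, but your conclusion from $\lambda=0$ and $\lambda\to\infty$ is unaffected).
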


\begin{proof}
\begin{enumerate}[label=\itshape\roman*\,\upshape), leftmargin=0pt, itemindent={\parindent+\widthof{\itshape iii\,\upshape) }}]
\item The conclusion follows from the following facts:
\begin{itemize}
\item The function $\tilde f$ is differentiable with continuous gradient over $\E\times\R_+^*$, with its formula given in \labelcref{eq:tildef}.
\item If $s\leq0$, then $1-s>0$.
\item Using \cite[Corollary~12.30]{bauschke2011convex}, we have $\grad{\sqdist{}Q}{}=2(\operatorname{Id}-\proj{}Q)$, which is continuous.
\end{itemize}

\item We denote by $\psi^t\colon\descrM{}{K^t}\to\R$ the function defined for all $m\in\descrM{}{K^t}$ as $\psi^t(m)=\ell^tm(K^t)$,
so that $\Fen{G^t}{}=\psi^t+\indic{\descrM+{K^t}}{}$.
Notice that $\psi^t$ is linear, and thus convex, and continuous, and that $\indic{\descrM+{K^t}}{}\in\plscc{\descrM{}{K^t}}$. Thus, we have, for all $m\in\descrM{}{K^t}$,
\begin{equation*}
\subgrad{\Fen{G^t}{}}m=\subgrad{\psi^t}m+\subgrad{\indic{\descrM+{K^t}}{}}m.
\end{equation*}
Clearly, this implies that, for all $m\in\descrM{}{K^t}\setminus\descrM+{K^t}$, we have $\subgrad{\Fen{G^t}{}}{m}=\varnothing.$ Now, let $m\in\descrM+{K^t}$.
We have
\begin{equation*}
\psi^t(m)=\int_{K^t}\ell^t\mathds1_{K^t}\,dm
\end{equation*}
and thus, $\subgrad{\psi^t}m=\left\{\ell^t\mathds1_{K^t}\right\}$.

We are thus left to compute $\subgrad{\indic{\descrM+{K^t}}{}}m$ for $m \in \descrM+{K^t}$. By definition, we have
\begin{equation*}
\subgrad{\indic{\descrM+{K^t}}{}}m=\left\{\phi\in\descrC{}{K^t}{}{}\,\big|\,\forall \bar m\in\descrM+{K^t},\int_{K^t}\phi(d\bar m-dm)\leq0\right\}.
\end{equation*} Let $\phi\in\descrC{}{K^t}{}{}$. We consider three cases.
\begin{enumerate}[label=\alph*)]
\item There exists $\bar v_1\in K^t$ such that $\phi(\bar v_1)>0$. To show that $\phi\not\in\subgrad{\indic{\descrM+{K^t}}{}}m$, we have to find $\bar m\in\descrM+{K^t}$ such that $\int_{K^t}\phi(d\bar m-dm)>0.$ We set $\bar m=m+\delta_{\bar v_1}$. Then clearly $\bar m\in\descrM+{K^t}$, and we have
\begin{equation*}
\int_{K^t}\phi(d\bar m-dm)=\phi(\bar v)>0.
\end{equation*}
And thus, $\phi\not\in\subgrad{\indic{\descrM+{K^t}}{}}m.$
\item We have $\phi\in\descrC{}{K^t}{\R_-}{}$ and $\int_{K^t}\phi\,dm<0.$ Set $\bar m=\frac12m$. Then $\bar m\in\descrM+{K^t}$ and 
\begin{equation*}
\int_{K^t}\phi(d\bar m-dm)=-\frac12\int_{K^t}\phi\,dm>0.
\end{equation*}
Thus, $\phi\not\in\subgrad{\indic{\descrM+{K^t}}{}}m.$
\item We have $\phi\in\descrC{}{K^t}{\R_-}{}$ and $\int_{K^t}\phi\,dm=0.$ Let $\bar m\in\descrM+{K^t}$. Then
\begin{equation*}
\int_{K^t}\phi(d\bar m-dm)=\int_{K^t}\phi\,d\bar m\leq0.
\end{equation*}
Thus, $\phi\in\subgrad{\indic{\descrM+{K^t}}{}}m.$
\end{enumerate}
Since those three cases make a partition of $\descrC{}{K^t}{}{}$, the result follows. \qedhere
\end{enumerate}
\end{proof}

Our next result proves that there is strong duality between between \Cref{pb:trueproj,pb:dualproj} and provides optimality conditions for these problems.

\begin{prop}[label=th:opt_cond]
There exists $(\bar\mu,\bar z)\in\E\times\R$ such that $F^t(\bar\mu,\bar z)<+\infty$ and $G^t$ is continuous at $L^t(\bar\mu,\bar z)$. Thus, \Cref{pb:trueproj,pb:dualproj} have opposite values and, for all $(\mu,z,m)\in\E\times\R\times\descrM{}{K^t}$, the following are equivalent:
\begin{enumerate}[label=\itshape\roman*\upshape\,), ref = \itshape\roman*]
\item $(\mu,z)$ is solution to \Cref{pb:trueproj} and $m$ is solution to \Cref{pb:dualproj}.
\item ${L^t}^*m\in\subgrad{F^t}{\mu,z}$ and $-m\in\subgrad{G^t}{L^t(\mu,z)}.$
\item\label{th4.8line:iii} $(\mu,z)=\grad{\Fen{F^t}{}}{{L^t}^*m}$ and $L^t(\mu,z)\in\subgrad{\Fen{G^t}{}}{-m}$.
\end{enumerate}
\end{prop}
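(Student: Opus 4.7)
The strategy is to invoke the Fenchel--Rockafellar theorem (\Cref{th:FenchelRockafellar}) on the pair of problems \Cref{pb:trueproj,pb:dualproj}, and then to extract the three-way equivalence from \Cref{coro:fenchel_rockafellar} and \Cref{lem:FenchelYoungEq}. The nontrivial step is to verify a Slater-type qualification, namely the very first claim of the proposition: one must produce $(\bar\mu,\bar z) \in \E \times \R$ with $F^t(\bar\mu,\bar z) < +\infty$ and with $G^t$ continuous at $L^t(\bar\mu,\bar z)$.

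To exhibit such a point, I would take $\bar\mu = \nu^t$, where $\nu^t$ is a minimizer of $\psi + \support{E^t}{}$ (which exists by coercivity, as already established in the proof of \Cref{prop:well_posed_els}), and $\bar z = \Fen f{\bar\mu_1}$. Then $F^t(\bar\mu,\bar z) < +\infty$ by formula \eqref{eq:Ft}, since $(\bar\mu_1,\bar z) \in \epi{\Fen f{}}$ and $\bar\mu_2 \in Q$ (otherwise $\psi(\nu^t)$ would be infinite). For the continuity of $G^t = \indic{\descrC{}{K^t}{\R_-}{}}{\cdot - \ell^t}$ at $L^t(\bar\mu,\bar z)$, observe that in the sup-norm topology on the Banach space $\descrC{}{K^t}{}{}$ (recall $K^t$ is compact), this is equivalent to $\sup_{v_1 \in K^t} L^t(\bar\mu,\bar z)(v_1) < \ell^t$. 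Unfolding the definition of $L^t$ and using the identification $\support{E^t}{\mu} = \support{-K^t}{\mu_1 + A^* \mu_2}$ gives
\begin{equation*}
\sup_{v_1 \in K^t} L^t(\bar\mu, \bar z)(v_1) = \bar z + \EInnerProd{\bar\mu_2}{b}{2} + \support{E^t}{\bar\mu} = \psi(\bar\mu) + \support{E^t}{\bar\mu} = \underline h^t,
\end{equation*}
and $\underline h^t < \ell^t$ whenever $\Delta^t > 0$, which is the only situation in which the projection step must actually be executed.

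Once the qualification is in place, \Cref{th:FenchelRockafellar} yields that \Cref{pb:trueproj,pb:dualproj} have opposite values, and \Cref{coro:fenchel_rockafellar} applies to deliver the equivalence of (i) and (ii). To pass from (ii) to (iii), I would apply \Cref{lem:FenchelYoungEq} to each of the two subgradient inclusions in (ii): one directly obtains the reformulations $(\mu,z) \in \subgrad{\Fen{F^t}{}}{{L^t}^* m}$ and $L^t(\mu,z) \in \subgrad{\Fen{G^t}{}}{-m}$.

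Finally, to upgrade the first of these to the gradient equality displayed in (iii), I would argue as follows. By \Cref{lem:subgrad_FtGtstar}, the condition $L^t(\mu,z) \in \subgrad{\Fen{G^t}{}}{-m}$ forces $-m \in \descrM+{K^t}$, hence $m(K^t) \leq 0$. By the expression for ${L^t}^*m$ in \Cref{lem:3.7}, its second coordinate is exactly $m(K^t) \leq 0$, placing ${L^t}^*m$ in $\E \times \R_-$, a region on which $\Fen{F^t}{}$ is Fréchet-differentiable by \Cref{lem:subgrad_FtGtstar}. The subgradient is therefore the singleton $\{\grad{\Fen{F^t}{}}{{L^t}^*m}\}$, which promotes the inclusion to the announced equality and closes the proof.
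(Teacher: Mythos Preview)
Your proposal is correct and follows the same route as the paper. One minor point: you assert $\psi(\bar\mu)+\support{E^t}{\bar\mu}=\underline h^t$, but $\underline h^t$ is defined via $\tilde E^t=\cloconv{E^{t-1}\cup\{v^t\}}$, not $E^t$, and these may differ after pruning; the paper therefore writes only the inequality $\psi(\bar\mu)+\support{E^t}{\bar\mu}\leq\underline h^t$ (immediate from $E^t\subset\tilde E^t$), which is all that is needed, while your treatment of (ii)\,$\Leftrightarrow$\,(iii)---extracting $-m\in\descrM+{K^t}$ to place ${L^t}^*m$ in the region where $\Fen{F^t}{}$ is differentiable---is in fact more explicit than the paper's bare remark that $1-m(K^t)>0$.
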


\begin{proof}
Notice that $\psi + \support{E^t}{} \in \plscc{\E}$ and it is coercive since it is lower bounded by $\psi + \support{E^t}{}$, which is coercive thanks to \Cref{lem:psi_growth}. Let then $\bar\mu\in\E$ be a solution to 
\begin{equation*}
\minimize{\mu\in\E} \psi(\mu) + \support{E^t}\mu
\end{equation*}
and set $\alpha = \psi(\bar\mu) + \support{E^t}{\bar\mu}$. Note that, since $E^t \subset \tilde E^t$ (where $\tilde E^t$ is the set defined in \Cref{alg:LNN_appli_formal}), we deduce that $\alpha \leq \underline h^t$ (where we use once again the notations of \Cref{alg:LNN_appli_formal}).

It follows from \eqref{eq:psi-we-use} that $\Fen{f}{\bar\mu_1} < +\infty$ and $\bar\mu_2 \in Q$, and we deduce from \eqref{eq:Ft} that $F^t(\bar\mu,\Fen f{\bar\mu_1})<+\infty$. Notice also that, for every $v \in K^t$, we have
\[
L^t(\bar\mu,\Fen f{\bar\mu_1})(v) = \psi(\bar\mu) - \EInnerProd{\bar\mu}{(v, A v)}{2} \leq \psi(\bar\mu) + \support{E^t}{\bar\mu} = \alpha \leq \underline h^t < \ell^t,
\]
and thus $G^t$ is continuous at $L^t(\bar\mu,\Fen f{\bar\mu_1})$.

We thus obtain that $0 \in \inter{L^t \dom{F^t} - \dom{G^t}}$, and the other conclusions of the proof follow from \Cref{th:FenchelRockafellar}, \Cref{coro:fenchel_rockafellar}, and the fact that $1-m(K^t)>0.$
\end{proof}

The aim of our next results is to show that, if $w^{t+1}$ and $\mu^{t+1}$ are defined as in \Cref{alg:dualExtLNN}, then they necessarily satisfy the properties required for the corresponding elements in \Cref{alg:LNN_appli_formal}. We start with the following property of $w^{t+1}$.

\begin{lemma}[label=lem:wt]
Let $m^t$ be a solution to \Cref{pb:dualproj} and set $w^{t+1}$ as in \Cref{alg:dualExtLNN}, i.e., as
\begin{equation*}
w^{t+1}=\frac{w^t-\mathfrak Em^t}{1-m^t(K^t)}.
\end{equation*}
Then $w^{t+1}\in\cloconv{\{w^t\}\cup K^t}$.
\end{lemma}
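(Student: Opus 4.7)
The key observation is that any solution $m^t$ of \Cref{pb:dualproj} must satisfy $-m^t \in \descrM+{K^t}$: indeed, by \Cref{lem:3.7}\emph{ii\,)}, $\Fen{G^t}{-m}$ is $+\infty$ unless $-m$ is a nonnegative measure on $K^t$, so this nonnegativity is forced by optimality (the dual value is finite because of \Cref{th:opt_cond} and the existence of a feasible primal point noted there). Consequently $m^t(K^t) \leq 0$, and in particular the denominator $1-m^t(K^t) \geq 1$ is strictly positive, so $w^{t+1}$ is well-defined.

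The proof then splits into two cases depending on the total mass of $-m^t$. If $m^t(K^t) = 0$, then $-m^t$ is a nonnegative measure with zero total mass, hence $-m^t = 0$. By definition of the operator $\mathfrak E$ from \Cref{it:esp}, this forces $\mathfrak E m^t = 0$, so $w^{t+1} = w^t$, which clearly lies in $\{w^t\}\cup K^t \subset \cloconv{\{w^t\}\cup K^t}$.

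If $m^t(K^t) < 0$, set $\alpha := -m^t(K^t) > 0$ and consider $\nu := (-m^t)/\alpha \in \prob{K^t}$. By linearity of $\mathfrak E$ we have $\mathfrak E m^t = -\alpha\, \mathfrak E\nu$, and by the second part of \Cref{it:esp} we obtain $\mathfrak E\nu \in \cloconv{K^t}$. Substituting into the definition of $w^{t+1}$ gives
\begin{equation*}
w^{t+1} = \frac{w^t - \mathfrak E m^t}{1 - m^t(K^t)} = \frac{w^t + \alpha\,\mathfrak E\nu}{1+\alpha} = \frac{1}{1+\alpha}\, w^t + \frac{\alpha}{1+\alpha}\, \mathfrak E\nu,
\end{equation*}
which exhibits $w^{t+1}$ explicitly as a convex combination of $w^t$ and an element of $\cloconv{K^t}$. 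Hence $w^{t+1} \in \cloconv{\{w^t\} \cup K^t}$, as required.

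There is essentially no obstacle beyond bookkeeping: the nontrivial content of the lemma is packaged in \Cref{it:esp} (the barycenter of a probability measure lies in the closed convex hull of its support), while the rest is a sign check and a one-line convex combination. The only point to keep straight is the minus sign in the definition of the admissible set $-\descrM+{K^t}$, which is why the formula for $w^{t+1}$ is stated with $w^t - \mathfrak E m^t$ in the numerator rather than $w^t + \mathfrak E m^t$.
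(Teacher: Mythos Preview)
Your proof is correct and follows essentially the same approach as the paper: both split into the cases $m^t(K^t)=0$ and $m^t(K^t)<0$, normalize $-m^t$ to a probability measure in the second case, invoke \Cref{it:esp} to place its barycenter in $\cloconv{K^t}$, and then read off $w^{t+1}$ as a convex combination of $w^t$ and that barycenter. Your write-up is slightly more explicit (you justify $-m^t\in\descrM+{K^t}$ via the domain of $\Fen{G^t}{}$, and you spell out why $m^t(K^t)=0$ forces $m^t=0$), but the argument is the same.
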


\begin{proof}
We recall that $m^t\in-\descrM{+}{K^t}$. There are two cases.
If $m^t\left(K^t\right) = 0$, then $w^{t+1} = w^t$. Otherwise, if $m^t\left(K^t\right) <0$ we set 
$\tilde m^t = \frac {m^t}{m^t\left(K^t\right)}\in\prob{K^t}$
so that $\mathfrak E\tilde m^t\in\cloconv{K^t}$. Then, using the linearity of $\mathfrak E$, we have
\begin{equation*}
w^{t+1} = \frac1{1-m^t\left(K^t\right)}w^t-\frac{m^t\left(K^t\right)}{1-m^t\left(K^t\right)}\mathfrak E\tilde m^t.
\end{equation*}
The result follows.
\end{proof}

Note that, if \Cref{assum:subgrad} is satisfied, then the sets $K^t$ are uniformly bounded in $t$. Hence, as an immediate consequence of \Cref{lem:wt}, we obtain the following result.

\begin{corollary}[label=coro:bound_wt]
Assume that \Cref{assum:subgrad} is satisfied. Then we can bound $w^t$ uniformly in $t$.
\end{corollary}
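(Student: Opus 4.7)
The plan is to combine \Cref{lem:wt} with a straightforward induction, using \Cref{assum:subgrad} to uniformly bound the outputs of the oracle. The hypothesis to exploit is that under \Cref{assum:subgrad} we have $\Enorm{v^s_1}{1}\leq C_{\oracle}$ for all $s\in\N$.

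First, I would check that the iterates $K^t$ produced by the pruning step of \Cref{alg:dualExtLNN} lie in a common bounded set, independent of $t$. Inspecting the two branches of the \textbf{Pruning} block, $K^t$ is always a subset of $\cloconv{K^{t-1}\cup\{-v_1^t\}}$; by induction starting from $K^{-1}$ we obtain
\begin{equation*}
K^t\subset\cloconv{K^{-1}\cup\{-v_1^0,\ldots,-v_1^t\}}\subset B(0,R),
\qquad R:=\max\!\left(C_{K^{-1}},C_{\oracle}\right),
\end{equation*}
the last inclusion being a consequence of \Cref{assum:subgrad} and of $K^{-1}\subset B(0,C_{K^{-1}})$.

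Next, \Cref{lem:wt} (applied with the notations of \Cref{alg:dualExtLNN}, where the quantity called $w^{t+1}$ in that lemma corresponds to $w_1^{t+1}$) gives
\begin{equation*}
w_1^{t+1}\in\cloconv{\{w_1^t\}\cup K^t}\subset\cloconv{\{w_1^t\}\cup B(0,R)}.
\end{equation*}
An immediate induction on $t$ then shows that $w_1^t\in\cloconv{\{w_1^0\}\cup B(0,R)}$ for every $t\in\N$, so that $\Enorm{w_1^t}{1}\leq\max(\Enorm{w_1^0}{1},R)$, which is the desired uniform bound.

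There is no real obstacle: once \Cref{lem:wt} is in hand and the oracle outputs are uniformly bounded, the corollary follows from convex-hull monotonicity. The only point worth checking carefully is that pruning does not spoil the uniform bound on $K^t$, which is ensured by the inclusion $K^t\subset\cloconv{K^{t-1}\cup\{-v_1^t\}}$ built into the pruning step.
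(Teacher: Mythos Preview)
Your proposal is correct and follows essentially the same approach as the paper: the paper simply remarks that under \Cref{assum:subgrad} the sets $K^t$ are uniformly bounded and then states the corollary as an immediate consequence of \Cref{lem:wt}, while you spell out the two inductions (on $K^t$ and on $w_1^t$) explicitly. Your treatment of the pruning step and the resulting bound $\Enorm{w_1^t}{1}\leq\max(\Enorm{w_1^0}{1},R)$ are exactly what the paper leaves implicit.
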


We next verify that the choices of $w_1^{t+1}$ and $\mu^{t+1}$ in \Cref{alg:dualExtLNN} satisfy the required conditions from \Cref{alg:LNN_appli_formal}.

\begin{lemma}[label=lem:ineq_wmu]
Let $t\in\N$ and consider the elements $\mu^t$, $\mu^{t+1}$, $w_1^t$, and $w_1^{t+1}$ defined as in \Cref{alg:dualExtLNN}. Define $Q^t$ as in \Cref{alg:LNN_appli_formal}. Then 
\begin{equation}
\label{eq:Qt-in-terms-of-algo-5}
Q^t = \{\mu \in \E_1 \times Q \mid \Fen{f}{\mu_1} + \EInnerProd{\mu_2}{b}2 - \EInnerProd{\mu_1 + A^\ast \mu_2}{v_1}{1} \leq \ell^t \text{ for all } v_1 \in \cloconv{K^t}\},
\end{equation}
$\mu^{t+1}$ is the solution to \Cref{pb:proj}, and
\begin{equation}
\label{eq:verify-algo-6}
(w_1^t-w_1^{t+1}, \mu_2^t -\mu_2^{t+1}) \in N_{Q^t}(\mu^{t+1}).
\end{equation}
\end{lemma}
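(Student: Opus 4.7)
The proof splits according to the three claims of the lemma.

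The description \eqref{eq:Qt-in-terms-of-algo-5} of $Q^t$ follows by direct unfolding of the definition $Q^t = \{\psi + \support{E^t}{} \leq \ell^t\}$ from \Cref{alg:LNN_appli_formal}, using the explicit form \eqref{eq:psi-we-use} of $\psi$ together with \eqref{eq:def_Et}. One has $\support{E^t}\mu = \sup_{v_1 \in K^t}\EInnerProd{-\mu_1 - A^\ast \mu_2}{v_1}{1}$, and the identity $\support{E^t}{} = \support{\cloconv{E^t}}{}$ permits replacing $K^t$ by $\cloconv{K^t}$ in this supremum, giving the required form.

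To identify $\mu^{t+1}$ with the Bregman projection, the plan is to invoke \Cref{th:opt_cond}. It guarantees the existence of a solution $m^t$ to \Cref{pb:dualproj} and, through item \labelcref{th4.8line:iii}, supplies the relation $(\mu^{t+1}, z^{t+1}) = \grad{\Fen{F^t}{}}{{L^t}^\ast m^t}$. Combining the expression of ${L^t}^\ast m^t$ from \Cref{lem:3.7}\labelcref{it:Lt_star} with the gradient formula for $\Fen{F^t}{}$ from \Cref{lem:subgrad_FtGtstar} (applicable since $m^t \in -\descrM{+}{K^t}$ ensures $m^t(K^t) \leq 0$), a short computation recovers exactly the DLS updates $w_1^{t+1} = (w_1^t - \mathfrak E m^t)/(1-m^t(K^t))$, $\mu_1^{t+1} = \grad f{w_1^{t+1}}$, and $\mu_2^{t+1} = \proj{\mu_2^t + m^t(K^t) b - A\mathfrak E m^t}Q$. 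The equivalence between \Cref{pb:proj} and \Cref{pb:trueproj} noted in the paragraph preceding \Cref{pb:trueproj} transfers this identification back to the Bregman projection, and \Cref{lem:FenchelYoungEq} applied to $\mu_1^{t+1} = \grad f{w_1^{t+1}}$ yields the auxiliary fact $w_1^{t+1} \in \subgrad{\Fen f{}}{\mu_1^{t+1}}$ needed below.

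The main obstacle is \eqref{eq:verify-algo-6}. I plan to derive it from the other half of item \labelcref{th4.8line:iii}, namely $L^t(\mu^{t+1}, z^{t+1}) \in \subgrad{\Fen{G^t}{}}{-m^t}$, which by \Cref{lem:subgrad_FtGtstar} unpacks into the pointwise bound $L^t(\mu^{t+1}, z^{t+1})(v) \leq \ell^t$ on $K^t$ together with the complementary slackness identity $\int_{K^t}(L^t(\mu^{t+1}, z^{t+1}) - \ell^t)\, d(-m^t) = 0$. When $m^t(K^t) < 0$, the normalized measure $\bar m = -m^t/(-m^t(K^t))$ is a probability on $K^t$ whose barycenter $\mathfrak E\bar m = \mathfrak E m^t/m^t(K^t)$ lies in $\cloconv{K^t}$ by \Cref{it:esp}; substituting this point for $v_1$ in \eqref{eq:Qt-in-terms-of-algo-5} yields, for every $\mu \in Q^t$, an upper bound on $\EInnerProd{\mathfrak E m^t}{\mu_1 + A^\ast \mu_2}{1}$. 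Using the identities $w_1^t - w_1^{t+1} = \mathfrak E m^t - m^t(K^t) w_1^{t+1}$ and $\mu_2^t - \mu_2^{t+1} = (\tilde\mu_2 - \mu_2^{t+1}) + A\mathfrak E m^t - m^t(K^t) b$ with $\tilde\mu_2 = \mu_2^t + m^t(K^t) b - A\mathfrak E m^t$, I would estimate $\EInnerProd{w_1^t - w_1^{t+1}}{\mu_1 - \mu_1^{t+1}}{1} + \EInnerProd{\mu_2^t - \mu_2^{t+1}}{\mu_2 - \mu_2^{t+1}}{2}$ using three ingredients: the projection inequality $\tilde\mu_2 - \mu_2^{t+1} \in N_Q(\mu_2^{t+1})$ applied to $\mu_2 \in Q$, the Fenchel--Young inequality $\EInnerProd{w_1^{t+1}}{\mu_1 - \mu_1^{t+1}}{1} \leq \Fen f{\mu_1} - \Fen f{\mu_1^{t+1}}$ scaled by $-m^t(K^t) \geq 0$, and the substitution together with the complementary slackness identity. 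Careful bookkeeping reveals that the two residual contributions proportional to $m^t(K^t)$ cancel exactly, yielding the required nonpositivity. The degenerate case $m^t(K^t) = 0$ forces $m^t = 0$, whereupon the inequality collapses to the classical obtuse-angle characterization of the Euclidean projection onto $Q$.
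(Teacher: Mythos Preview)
Your proposal is correct and follows essentially the same route as the paper's proof: the same identification of $Q^t$ via \eqref{eq:def_Et}, the same recovery of $\mu^{t+1}$ from \Cref{th:opt_cond}\labelcref{th4.8line:iii} combined with \Cref{lem:3.7,lem:subgrad_FtGtstar}, and the same chain of estimates for \eqref{eq:verify-algo-6} built from the projection inequality onto $Q$, the subgradient inequality for $w_1^{t+1}\in\partial f^*(\mu_1^{t+1})$, the complementary slackness encoded in $L^t(\mu^{t+1},z^{t+1})\in\partial G^{t*}(-m^t)$, and the barycenter substitution $\bar v_1=\mathfrak Em^t/m^t(K^t)\in\cloconv{K^t}$. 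One cosmetic remark: you say \Cref{th:opt_cond} ``guarantees the existence of a solution $m^t$'', but in the logic of the algorithm $m^t$ is \emph{given} as a solution to \Cref{pb:expl_dual_proj} (which coincides with \Cref{pb:dualproj}); \Cref{th:opt_cond} is invoked only to link it to the primal solution via the optimality conditions.
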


\begin{proof}
First, \Cref{eq:Qt-in-terms-of-algo-5} derives from the definition of $Q^t$ in \Cref{alg:LNN_appli_formal} and from \cref{eq:def_Et}.

To prove the second statement, note that, thanks to \Cref{struct:lpz}, $f^\ast$ is strongly convex, and thus \Cref{pb:proj} admits a unique solution $\boldsymbol\mu$. Due to the discussion at the beginning of the section, $\boldsymbol\mu$ is solution to \Cref{pb:proj} if and only if $(\boldsymbol\mu, f^\ast(\boldsymbol\mu_1))$ is solution to \Cref{pb:trueproj}.

On the other hand, the element $m^t$ from \Cref{alg:dualExtLNN} is solution to \Cref{pb:expl_dual_proj} with $S = K^t$, which, thanks to \Cref{it:MEt,lem:3.7}, is equivalent to $m^t$ being a solution to \Cref{pb:dualproj}. Thus, by \Cref{th:opt_cond}, we necessarily have $(\boldsymbol\mu, f^\ast(\boldsymbol\mu_1)) = \nabla F^{t^\ast}(L^{t^\ast} m^t)$. Using the expression of $\nabla F^{t^\ast}$ from \Cref{lem:subgrad_FtGtstar} and that of $L^{t^\ast}$ from \Cref{lem:3.7}, we finally deduce that $\boldsymbol\mu = \mu^{t+1}$.

Let us now turn to the proof of \eqref{eq:verify-algo-6}. Note that, by definition of the normal cone, \eqref{eq:verify-algo-6} is equivalent to having
\begin{equation*}
\EInnerProd{w^t_1-w^{t+1}_1}{\mu_1-\mu_1^{t+1}}1+\EInnerProd{\mu_2^t-\mu_2^{t+1}}{\mu_2-\mu_2^{t+1}}2\leq 0
\end{equation*}
for every $\mu = (\mu_1, \mu_2) \in Q^t$.

For $\mu \in Q^t$, we set
\begin{equation*}
\Lambda^t(\mu)
= \EInnerProd{w^t_1-w^{t+1}_1}{\mu_1-\mu_1^{t+1}}1+\EInnerProd{\mu_2^t-\mu_2^{t+1}}{\mu_2-\mu_2^{t+1}}2.
\end{equation*}
First, we notice that
\begin{align*}
\MoveEqLeft \EInnerProd{\mu_2^t-\mu_2^{t+1}}{\mu_2-\mu_2^{t+1}}2 \\
& =\EInnerProd{\mu_2^t-A\,\mathfrak Em^t+m^t(K^t)b-\mu_2^{t+1}}{\mu_2-\mu_2^{t+1}}2+\EInnerProd{A\,\mathfrak Em^t-m^t(K^t)b}{\mu_2-\mu_2^{t+1}}2\\
& \leq \EInnerProd{A\,\mathfrak Em^t-m^t(K^t)b}{\mu_2-\mu_2^{t+1}}2,
\end{align*}
since $\mu_2^{t+1}=\proj{\mu_2^t-A\,\mathfrak Em^t+m(K^t)b}Q$.
Also notice that, by definition of $w_1^{t+1}$, we have
\begin{equation*}
w_1^t=\mathfrak Em^t+\left(1-m^t(K^t)\right)w_1^{t+1}
\end{equation*}
and thus
\begin{equation*}
w_1^t-w_1^{t+1}=\mathfrak Em^t-m^t(K^t)w_1^{t+1}
\end{equation*}
and
\begin{equation*}
\Lambda^t(\mu)\leq\EInnerProd{\mu-\mu^t}{\left(\mathfrak Em^t,A\,\mathfrak Em^t\right)}{}-m^t(K^t)\EInnerProd{\mu-\mu^{t+1}}{\left(w_1^{t+1},b\right)}{}.
\end{equation*}
Clearly, the result holds if $m^t=0_{\descrM{}{K^t}}$.

Now, assume that $m^t\neq 0_{\descrM{}{K^t}}$. We recall that, by definition of $\mu_1^{t+1}$ in \Cref{alg:dualExtLNN}, we have $w_1^{t+1}\in\subgrad{\Fen f{}}{\mu_1^{t+1}}$, and thus
\begin{equation*}
\EInnerProd{\mu_1-\mu_1^{t+1}}{w_1^{t+1}}1\leq \Fen f{\mu_1}-\Fen f{\mu_1^{t+1}}
\end{equation*}
which yields, using the fact that $-m^t\in\descrM+{K^t}$,
\begin{equation*}
\Lambda ^t(\mu)\leq\EInnerProd{\mu-\mu^{t+1}}{\left(\mathfrak Em^t,A\,\mathfrak Em^t\right)}{}-m^t(K^t)\left(\Fen f{\mu_1}-\Fen f{\mu_1^{t+1}}+\EInnerProd{\mu_2-\mu_2^{t+1}}b2\right).
\end{equation*}
Recalling that $m^t$ is solution to \Cref{pb:dualproj} and $(\mu^{t+1}, \Fen f{\mu_1^{t+1}})$ is solution to \Cref{pb:trueproj}, it follows from \Cref{th:opt_cond} that $L^t\left(\mu^{t+1},\Fen f{\mu_1^{t+1}}\right)\in\subgrad{\Fen{G^t}{}}{-m^t}$. Hence, from \Cref{lem:subgrad_FtGtstar}, we have
\begin{multline*}
\int_{K^t} \left[\Fen f{\mu_1^{t+1}}+\EInnerProd{\mu_2^{t+1}}b2-\ell^t-\EInnerProd{\mu_1^{t+1}+A^*\mu_2^{t+1}}{v_1}{}\right]\,dm^t(v_1) \\ = m^t(K^t)\left(\Fen f{\mu_1^{t+1}}+\EInnerProd{\mu_2^{t+1}}b2-\ell^t\right)-\EInnerProd{\mu_1^{t+1}+A^*\mu_2^{t+1}}{\mathfrak Em^t}{}=0,
\end{multline*}
and thus
\begin{equation*}
\Lambda^t(\mu)\leq \EInnerProd{\mu_1+A^*\mu_2}{\mathfrak Em^t}{}-m^t(K^t)\left(\Fen f{\mu_1}+\EInnerProd{\mu_2}b2-\ell^t\right).
\end{equation*}
Since $\mu\in Q^t$, it follows from \eqref{eq:Qt-in-terms-of-algo-5} that, for all $v_1\in \cloconv{K^t}$,
\begin{equation*}
\Lambda^t(\mu)\leq \EInnerProd{\mu_1+A^*\mu_2}{\mathfrak Em^t}{} - m^t(K^t)\EInnerProd{\mu_1+A^*\mu_2}{v_1}1.
\end{equation*}
Set $\displaystyle\bar v_1 = \frac{\mathfrak Em^t}{m^t(K^t)}$. We have $\bar v_1\in \cloconv{K^t}$ since $\displaystyle\frac{m^t}{m^t(K^t)}\in\prob{K^t}$ and $\mathfrak E$ is linear, and taking $v_1=\bar v_1$ in the above inequality yields $\Lambda^t(\mu)\leq0$, which concludes the proof.
\end{proof}

We finally collect the results of this section in order to provide a proof for \Cref{prop:projection}.

\begin{proof}[Proof of \Cref{prop:projection}]
\begin{enumerate}[label=\itshape\roman*\,\upshape), leftmargin=0pt, itemindent={\parindent+\widthof{\itshape iii\,\upshape) }}]
\item Thanks to \Cref{it:MEt,lem:3.7}, \Cref{pb:expl_dual_proj} coincides with \Cref{pb:dualproj}, and the latter is the dual of \Cref{pb:trueproj}, which is equivalent (up to a change in its value and a transformation in its variables, as discussed in the beginning of this section) to \Cref{pb:proj}.

\item Part \ref{res:update_mu} of the conclusion follows from \Cref{lem:ineq_wmu}, while part \ref{res:update_w} follows by combining the definition of $\mu_1^{t+1}$ in \Cref{alg:dualExtLNN}, \Cref{lem:FenchelYoungEq}, \Cref{coro:bound_wt}, and \Cref{lem:ineq_wmu}. \qedhere
\end{enumerate}
\end{proof}

\section{Numerical examples}\label{sec:numerics}

In this section, given $n\in\N^*$ and $p\in[1,+\infty]$, we denote by $B_p^n$ the closed unit ball in $\R^n$ for the $\ell^p$ norm, which is denoted by $\| \cdot \|_p$. We denote by $\mathcal{M}_{n,m}(\R)$ the set of real matrices of size $n \times m$. For square matrices, we write $\mathcal{M}_n(\R)$ instead of $\mathcal{M}_{n,n}(\R)$. The vector space of symmetric matrices of size $n$ is denoted
by $\mathcal S_n(\R)$ and the subset of those which are positive semidefinite is denoted by $\mathcal S_n^+(\R)$.

\subsection{A projection problem}
\label{subsec:test1}

We first test our algorithm in a simple problem taken from \cite{Silveti_Falls_2020}.
Let $p\in \{1, 2 \}$ and $A \in \mathcal{M}_{1,2}(\R) \backslash \{ 0 \}$. We aim at solving
\begin{problem}{}{expb:proj}
\minimize{x\in B^2_p}\frac12\sqnorm{x-y}{} \text{ s.t. } Ax=0.
\end{problem}
Notice that the feasible set of the problem is a segment, obtained as the intersection of a convex set and a line, whose extremities can be computed analytically. The projection of a point onto a segment being easy to solve, the problem can be solved analytically without difficulty.

\paragraph{Structure}
This problem fits in our framework if we take $\E_1 = \R^2$, $f = \frac12\sqnorm{\cdot-y}{}$, $\E_2 = \R$, $Q_1=\{0\}$, $Q_2 = \R$, $b = 0$, and $K = B^2_p$. For this choice, we indeed have $\sigma_Q= \iota_{\{ 0 \}}$. Notice that, since $K$ is compact, \Cref{assum:subgrad} is satisfied.

\paragraph{Qualification condition}
This problem is qualified. Indeed, if $\text{Ker}(A) = \R\times\{0\}$, then we can take $K^{-1} = \{(0,-1),(0,1)\}$, and otherwise we can take $K^{-1} =\{(-1,0),(1,0)\}.$

\paragraph{Oracle}
The LMO writes:
$\minimize{x\in B^2_p} \InnerProd\mu x$,
for a given $\mu \in \R^2$.
For $p=1$, the set $K= B^2_1$ is the convex hull of the four points $(0,1)$, $(0,-1)$, $(1,0)$, and $(-1,0)$, therefore, for any $\mu \in \R^2$, one of them is solution to the LMO, making its resolution easy.
For $p=2$, two cases must be considered. If $\mu= 0$, any point in $K$ is a solution. Otherwise, the unique solution is given by $-\mu/\| \mu \|_2$.

\paragraph{Numerical results}

We provide numerical results for a fixed value of $A$ and for $10^4$ iterations of the algorithm. For the two possible values of $p$, we consider two values of $y$, denoted $y_1$ and $y_2$ and chosen in such a way that, for $y= y_1$, the solution to \Cref{expb:proj} lies on the boundary of $K$ (i.e., on the unit sphere) and, for $y=y_2$, the solution lies in the interior of $K$.

Let us note that for $p=1$, the (primal) solution is obtained after finitely many iterations, since $K$ is the convex hull of finitely many points. We also obtain an optimal primal solution in a finite number of iterations if it lies in the interior of $K$ (for $y= y_2$). We do not expect in general to find the dual solution in finitely many iterations.

In \Crefrange{fig:ball_PDG_e_1}{fig:ball_PDG_i_2}, we show the evolution of the primal-dual gap $\Delta^t$ at each iteration, in log-log scale, for $\lambda\in \{0.05,0.1,1-\sqrt{2-\sqrt 2},0.5 \}$, for the pairs $(y,p)$ taken respectively as $(y_1,1)$, $(y_2,1)$, $(y_1,2)$, and $(y_2,2)$. For the pruning rule, we simply take $K^t= \left\{\hat x^t\right\}\cup K^{-1}$ at critical iterations.

We notice that, in all of these cases, the primal-dual gap show a numerical decrease toward $0$ with a speed of order $1/t$, with the curves for $\lambda=0.5$ decreasing slightly more slowly than those with the other choices of $\lambda$. We recall that our proof shows only a speed of order $1/\sqrt t$. In \Cref{fig:ball_cuts_e_2,fig:ball_cuts_i_2}, we show the number of cuts at each iteration for the same values of $\lambda$ and for the pairs $(y,p)$ taken respectively as $(y_1,2)$ and $(y_2,2).$ We do not show the number of cuts for $p=1$ since in that case it is bounded by 4 (the four extremal points of $K$, which contains $K^{-1}$).
We notice that the number of cuts stays lower for smaller values of $\lambda.$ This result was to be expected, since pruning steps should happen more often for lower values of $\lambda.$

\begin{figure}[hptb]
\centering
\begin{subfigure}{0.49\linewidth}
\centering
\includegraphics[width=\linewidth]{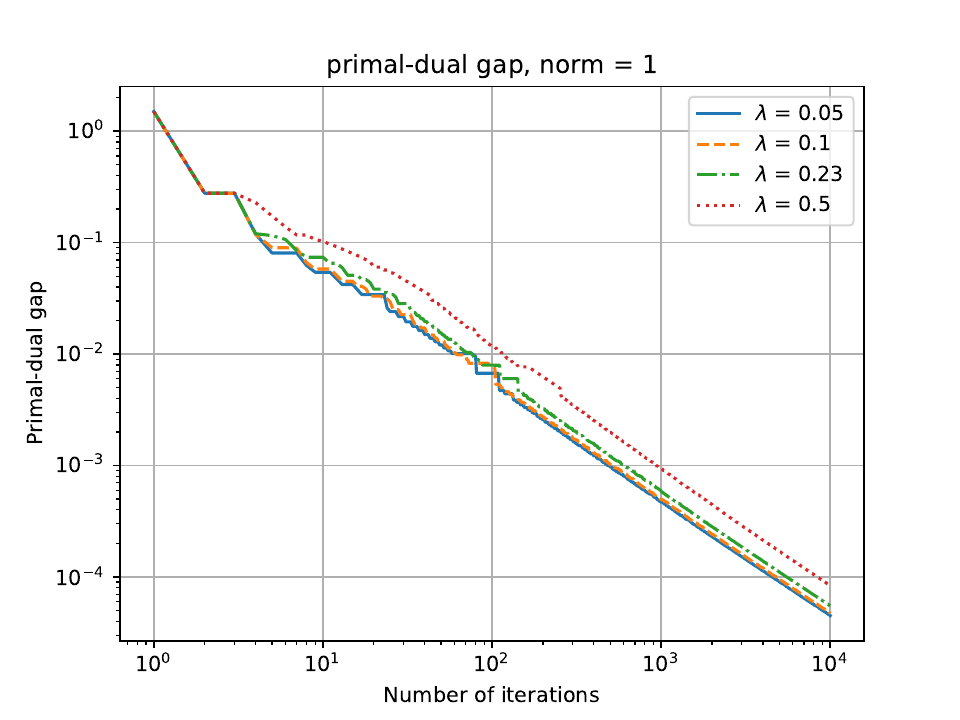}
\caption{$y=y_1$ and $p=1$}
\label{fig:ball_PDG_e_1}
\end{subfigure}
\hfill
\begin{subfigure}{0.49\linewidth}
\centering
\includegraphics[width=\linewidth]{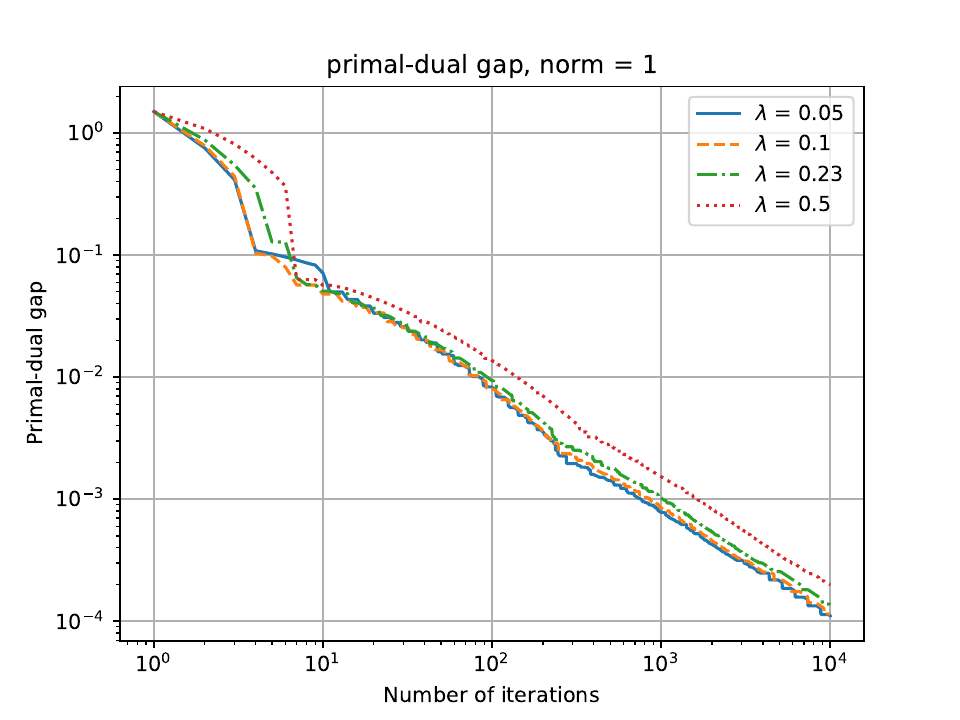}
\caption{$y=y_2$ and $p=1$}
\label{fig:ball_PDG_i_1}
\end{subfigure}
\hfill
\begin{subfigure}{0.49\linewidth}
\centering
\includegraphics[width=\linewidth]{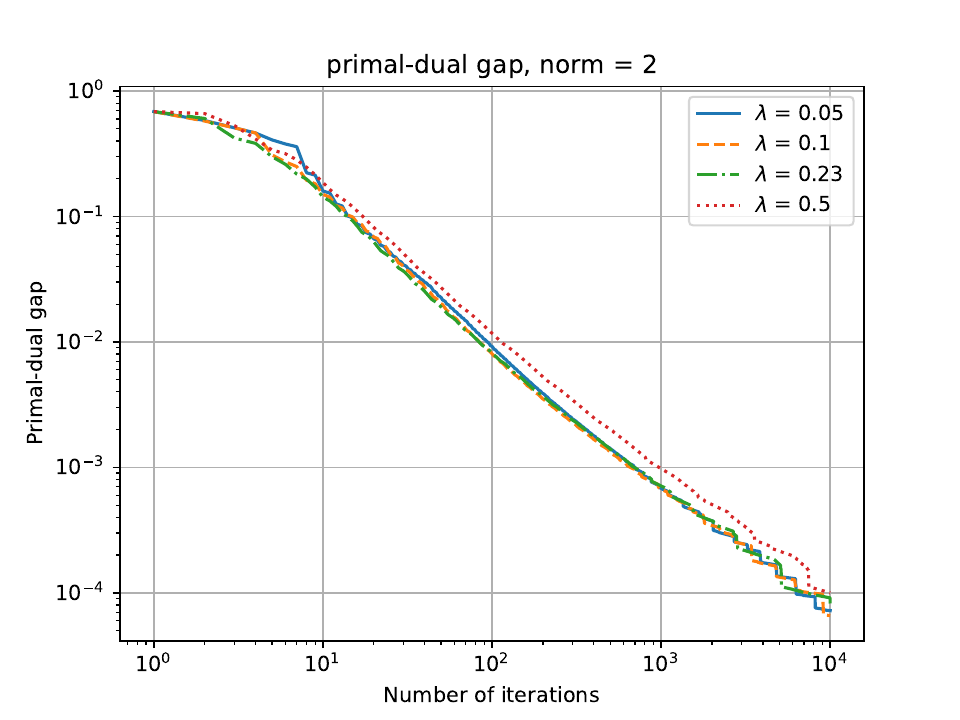}
\caption{$y=y_1$ and $p=2$}
\label{fig:ball_PDG_e_2}
\end{subfigure}
\begin{subfigure}{0.49\linewidth}
\centering
\includegraphics[width=\linewidth]{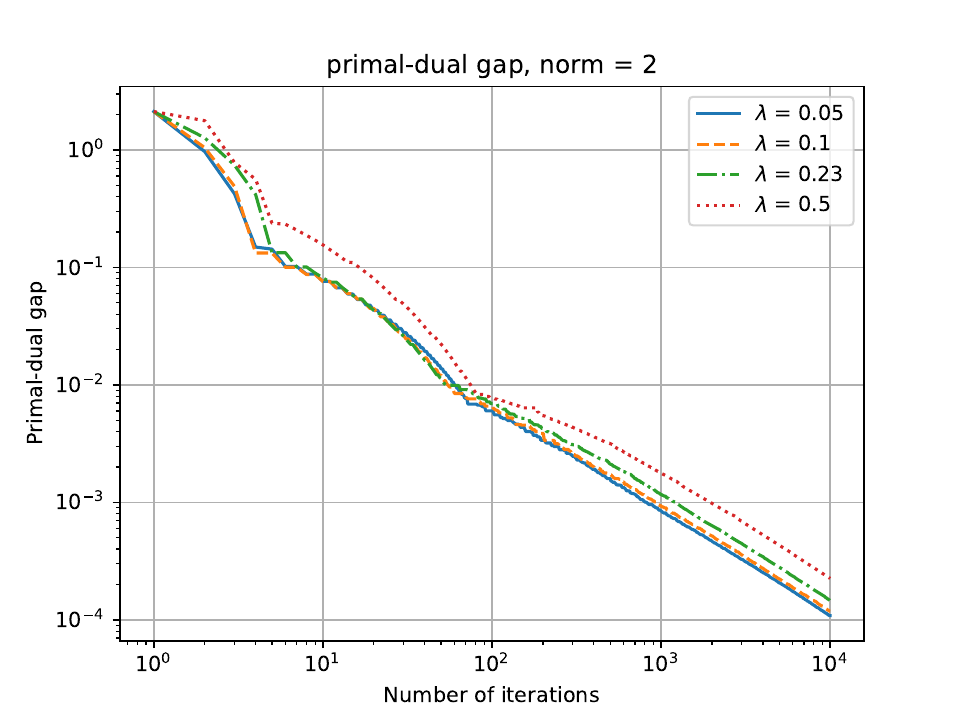}
\caption{$y=y_2$ and $p=2$}
\label{fig:ball_PDG_i_2}
\end{subfigure}
\caption{Primal-dual gap for 4 instances of \Cref{expb:proj}}
\label{fig:ball1}
\end{figure}

\begin{figure}[hptb]
\begin{subfigure}{0.49\linewidth}
\centering
\includegraphics[width=\linewidth]{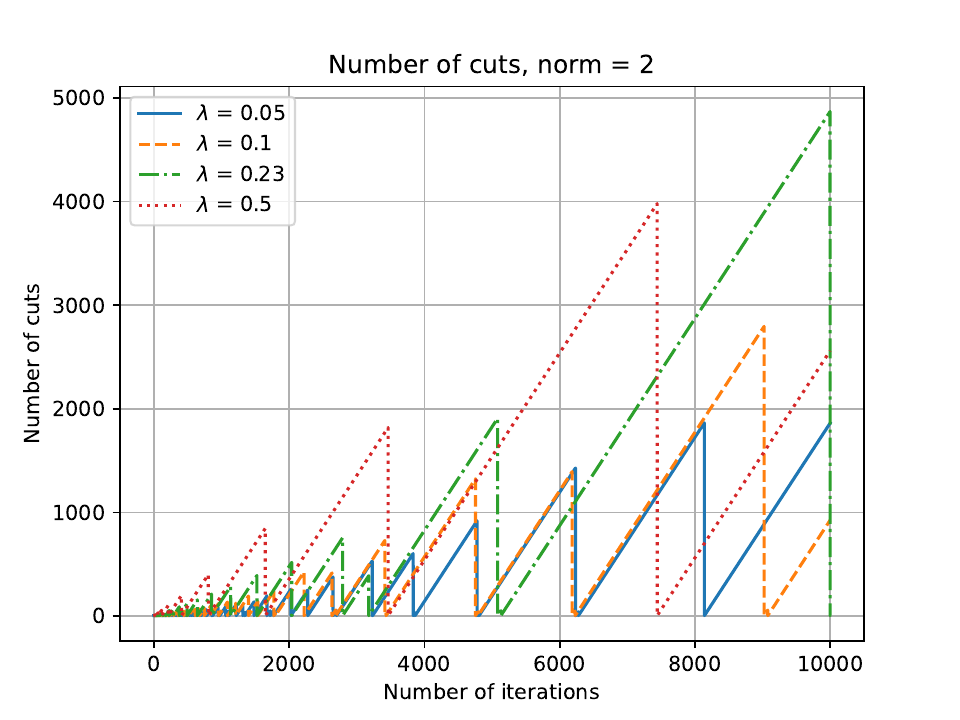}
\caption{$y=y_1$ and $p=2$}
\label{fig:ball_cuts_e_2}
\end{subfigure}
\begin{subfigure}{0.49\linewidth}
\centering
\includegraphics[width=\linewidth]{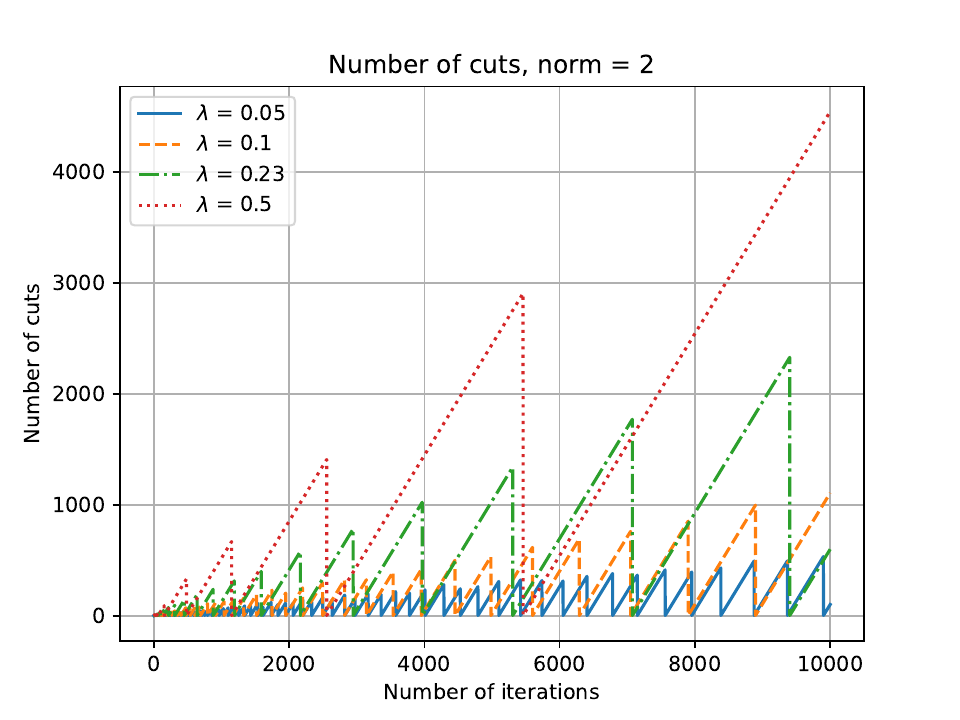}
\caption{$y=y_2$ and $p=2$}
\label{fig:ball_cuts_i_2}
\end{subfigure}
\caption{Evolution of the number of cuts for 2 instances of \Cref{expb:proj}}
\label{fig:ball2}
\end{figure}

\subsection{A semidefinite program}

We now test our algorithm with the following problem, extracted from \cite{yurtsever2019conditional}. Let $n\in\N^*$ and let $C\in\mathcal S_n(\R)$. We aim at solving
\begin{problem}{}{}
\minimize{X\in\mathcal S_n^+(\R)} \descrInnerProd CX{\M_n(\R)} \text{ s.t. diag}(X)= 1_{\R^n}
\end{problem}where $\descrInnerProd AB{\M_n(\R)} = \text{tr}(A^TB)$ is the canonical inner product.

\paragraph{Structure}
This problem fits in our framework if we take $\E_1 = \mathcal S_n(\R)$, $f = \descrInnerProd C{}{\M_n(\R)}$, $\E_2 = \R^n$, $A$ the linear operator which maps $X$ to its diagonal, $Q_1 = \{0\}$, $Q_2 = \R^n$, $b = 1_{\R^n}$ and $K = \mathcal S_n^+(\R)\cap \left\{\text{tr}\leq n+1\right\}$. Notice that, in this context, $A^*$ is the linear operator which maps a vector $Y$ to the diagonal matrix whose diagonal is $Y$. The definition of the set $K$ is dictated by the qualification condition and by the necessity to have an oracle.

\paragraph{Qualification condition}
This problem verifies the qualification condition by taking $K^{-1}$ as the convex hull of the null matrix and of the matrices $(n+1)E_{i,i},\, i\in\IntInt1n$, where $E_{i,j}$, $i,j\in\IntInt1n$ are the elementary matrices.

\paragraph{Oracle}
A direct application of the spectral theorem reveals that $K= (n+1) \, \conv{K'}$, where the set $K'$ is defined by
$K'= \{ 0 \} \cup \big\{ vv^T | \,\| v \|_2= 1 \big\}$. This allows to show the following lemma.

\begin{lemma}
Let $M\in\mathcal S_n(\R)$. Consider the problem
\begin{problem}{}{}
\minimize{V \in K}{\descrInnerProd MV{\M_n(\R)}}.
\end{problem}
Let $s$ denote the small eigenvalue of $M$. If $s \geq 0$, then the null matrix is a solution to the problem. Otherwise, if $s < 0$, let $\bar{v}$ be an eigenvector associated with $s$, of norm equal to 1. Then $(n+1) \bar{v} \bar{v}^T$ is a solution to the problem.
\end{lemma}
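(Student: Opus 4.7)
The plan is to exploit the decomposition $K = (n+1)\,\mathrm{conv}(K')$ mentioned just before the lemma, where $K' = \{0\} \cup \{vv^T \mid \|v\|_2 = 1\}$. Since a continuous linear functional on a convex hull attains its infimum at an extreme point of that hull, and the extreme points of $\mathrm{conv}(K')$ lie in $K'$ (which is compact, being the image of $\{0\} \cup S^{n-1}$ under a continuous map with the identification $v \sim -v$), it suffices to minimize $V \mapsto \descrInnerProd{M}{V}{\M_n(\R)}$ over the set $(n+1)K'$.

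The restriction to $(n+1)K'$ splits into two cases. Either $V = 0$, giving value $0$, or $V = (n+1) vv^T$ with $\|v\|_2 = 1$, giving value $(n+1)\descrInnerProd{M}{vv^T}{\M_n(\R)} = (n+1)\, v^T M v$. Minimizing the quadratic form over the unit sphere is the classical Rayleigh quotient characterization of the smallest eigenvalue: $\inf_{\|v\|_2=1} v^T M v = s$, attained at any unit eigenvector $\bar v$ associated with $s$.

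It remains to compare the two candidate values. If $s \geq 0$, then for every admissible rank-one candidate we have $(n+1) v^T M v \geq 0$, so the null matrix achieves the infimum. If $s < 0$, then the choice $v = \bar v$ yields $(n+1) s < 0$, which is strictly better than $0$, so $(n+1)\bar v\bar v^T$ is optimal. The only point that requires minor care is to justify the reduction to extreme points; this is standard (Bauer's maximum principle, or equivalently the fact that a lower semicontinuous concave function on a compact convex set attains its minimum at an extreme point), and the compactness of $K'$ makes it straightforward. No step here is a serious obstacle; the whole argument is essentially the spectral theorem plus the identification of extreme points of $K$.
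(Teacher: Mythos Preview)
Your proposal is correct and follows exactly the route the paper indicates: it uses the decomposition $K=(n+1)\,\mathrm{conv}(K')$ stated just before the lemma, reduces the linear minimization to the generating set $K'$, and then invokes the Rayleigh-quotient characterization of the smallest eigenvalue. The paper itself gives no further details beyond pointing to the spectral theorem and this decomposition, so your argument is precisely the intended one, only spelled out.
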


As in the previous subsection, since $K$ is bounded, \Cref{assum:subgrad} is satisfied.

\paragraph{Numerical results}
We set $n=10$ and take a random matrix $C\in\mathcal S_n(\R)$.
We then run the algorithm until the primal-dual gap $\Delta^t$ reaches $10^{-6}$. We use the same pruning rule as in \Cref{subsec:test1}. In \Cref{fig:mat_cuts}, we show the number of cuts at each iteration, respectively for $\lambda=0.05$ and $\lambda =0.7$. Although we do not show it, the number of cuts for the other values of $\lambda$ stays below $45$. We notice that, as we expected, the maximal number of cuts is lower for smaller values of $\lambda$.

\begin{figure}[H]
\centering
\includegraphics[width=0.49\linewidth]{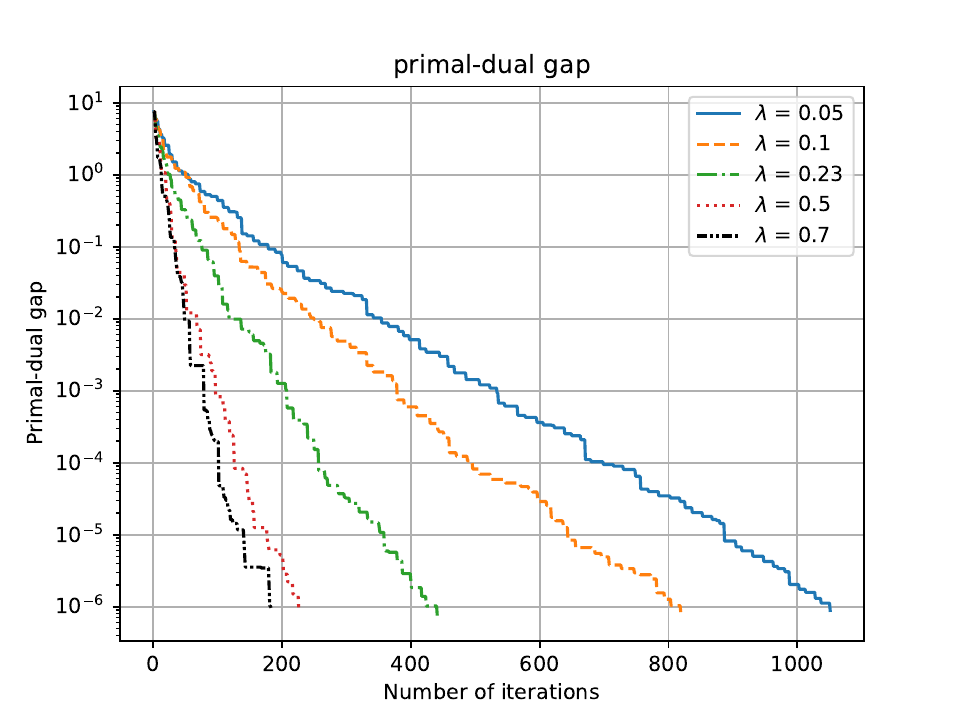}
\caption{Primal-dual gap}
\label{fig:mat_PDG}
\end{figure}

In \Cref{fig:mat_PDG}, we show the primal-dual gap at each iteration for $\lambda\in \big\{0.05,0.1,1-\sqrt{2-\sqrt 2},0.5,0.7\big\}$, in $y$-log scale. We can see that, numerically, our algorithm has a linear convergence speed for this problem. Also notice that, unlike the previous problem, the algorithm seems to converge faster for larger values of $\lambda.$ We do not know the reason for this behavior.

\begin{figure}[H]
\centering
\begin{subfigure}{0.49\linewidth}
\centering
\includegraphics[width=\linewidth]{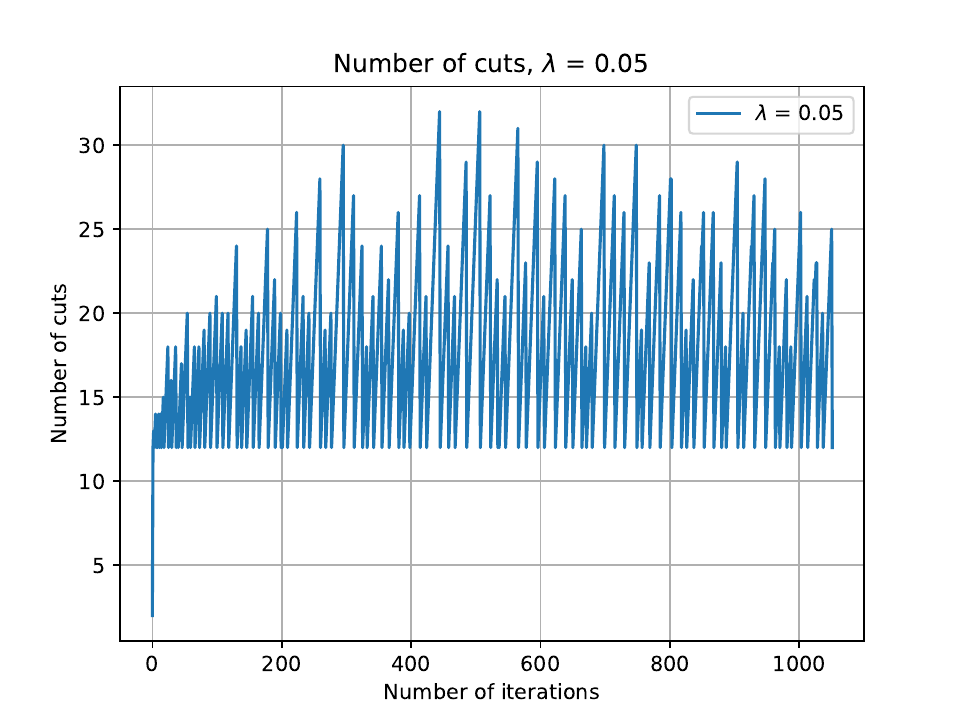}
\caption{$\lambda=0.05$}
\label{fig:mat_cuts_005}
\end{subfigure}
\begin{subfigure}{0.49\linewidth}
\centering
\includegraphics[width=\linewidth]{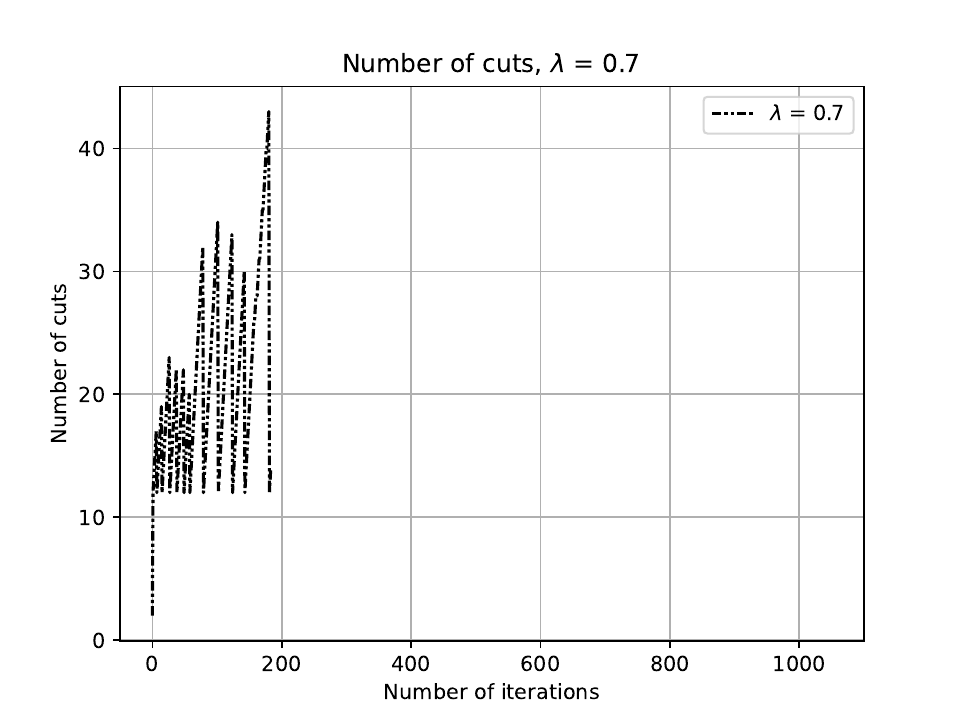}
\caption{$\lambda=0.7$}
\label{fig:mat_cuts_07}
\end{subfigure}
\hfill
\caption{Number of cuts at each iteration}
\label{fig:mat_cuts}
\end{figure}

\section{Conclusion and open problems}

In this article, we proposed a Dualized Level-Set method for problems of the form
\begin{equation*}
\minimize{x_1\in\E_1}f(x_1)+\support Q{Ax_1-b}+\indic{K}{x_1},
\end{equation*}
which we derived from an extension of the Level-Set method. We showed guarantees for the convergence of this algorithm. There are some improvement perspectives we can think of.
\begin{itemize}
\item Our experiments show better numerical convergence rates than we expected, which might be due to the specific form of these problems. This should not be surprising since some classical versions of the FWA are also known to have an improved convergence rate in some contexts, see for instance \cite[Section 2.2]{braun2023conditional}.

\item We assume that we are able to solve exactly our subproblems. An extension of our results could concern the situation where the problems are only solved up to a certain precision, as was done in \cite{Jaggi} for the FWA.

\item We chose to keep the parameter $\lambda$ fixed along the iterations. Our numerical results, in particular those done for the semidefinite program, show that the value of $\lambda$ may have a significant impact on the efficiency of the algorithm. Future improvements may therefore concern variants of the algorithm in which $\lambda$ is adapted along the algorithm. For instance, we think our proof could be adapted if we take $0<\lambda_{\min}\leq\lambda\leq\lambda_{\max}<1$, with changes of $\lambda$ occurring only at critical iterations.

\item The nonsmooth term in the cost function is restricted to be of the form $\support{Q}{Ax-b}$, with $Q$ of a specific shape. A possible extension may deal with the case of a general nonsmooth term $g$. From an algorithmic point of view, this would lead to a more general projection problem, whose dual (\Cref{pb:expl_dual_proj}) would involve the Moreau envelope of $g^*$. Yet some difficulties would arise in the extension of \Cref{lem:psi_growth}, whose proof heavily relies on the structure of the nonsmooth term.

\item Our numerical experience shows that pruning rules are unavoidable. Indeed, an implementation of the algorithm which retains all cuts computed at each iteration quickly becomes intractable. Yet the pruning rule that we proposed is not completely satisfactory in so far as we do not have a priori bounds on the number of cuts which need to be stored. As was seen on \Cref{fig:ball2}, the critical iterations may occur at diminishing frequency, leading to an accumulation of many cuts.
In \cite{Kiwiel}, the author develops a variant of the Level-Set method with a different pruning rule, for which the number of cuts to be stored is bounded by the dimension of the space. We expect that his method can be extended in the same manner as we extended the Level-Set method of \cite{lemarechal1995new}.
\end{itemize}

\bibliographystyle{plain} 
\bibliography{biblio.bib}

\end{document}